\theoremstyle{definition}
\newtheorem{defn}{Definition}[section]
\newtheorem{prop}[defn]{Proposition}
\newtheorem{thm}[defn]{Theorem}
\newtheorem{lem}[defn]{Lemma}
\newtheorem{question}[defn]{Question}
\newtheorem{remark}[defn]{Remark}
\newtheorem{claim}[defn]{claim}
\title[Partition models, Permutations of infinite sets, and weak forms of $\mathsf{AC}$]{Partition models, Permutations of infinite sets without fixed points, and weak forms of $\mathsf{AC}$}
\author{A\MakeLowercase{mitayu} B\MakeLowercase{anerjee}}
\address{Alfr\'ed R\'enyi Institute of Mathematics, Reáltanoda utca 13-15, Budapest-1053, Hungary}
\email{banerjee.amitayu@gmail.com}
\subjclass[2020]{Primary 03E25,  Secondary 03E35, 06A07, 05C63, 05C15.}
\keywords{Axiom of Choice, infinite graphs, spanning subgraphs, the existence of permutations of infinite sets without fixed points, uniqueness of algebraic closures, partition models, Van Douwen’s  Choice  Principle, Fraenkel-Mostowski permutation models of $\mathsf{ZFA+\neg AC}$}
\begin{document}

\maketitle
\begin{abstract}
We study new relations of the following statements with weak choice principles in $\mathsf{ZF}$ (Zermelo--Fraenkel set theory without the Axiom of Choice ($\mathsf{AC}$)) and $\mathsf{ZFA}$ ($\mathsf{ZF}$ with the axiom of extensionality weakened to allow the existence of atoms).
\begin{itemize}
\item There does not exist an infinite Hausdorff space $X$ such that every infinite subset of $X$ contains an infinite compact subset.
\item For every set $X$ there is a set $A$ such that there exists a choice function on the collection $[A]^2$ of two-element subsets of $A$ and satisfying $\vert X\vert\leq \vert2^{A^{2}}\vert$.

\item If a field has an algebraic closure then it is unique up to isomorphism.  

\item For every infinite set $X$, there exists a permutation of $X$ without fixed points.

\item $\mathsf{vDCP}$ (Van Douwen’s  Choice  Principle).

\item Any infinite locally finite connected graph has a spanning subgraph omitting $K_{2,n}$ for any $2< n\in\omega$.

\item Any infinite locally finite connected graph has a spanning $m$-bush for any even integer $m\geq 4$.
\end{itemize}
We also study the new status of different weak choice principles in the finite partition model (a type of permutation model of $\mathsf{ZFA+\neg AC}$) introduced by Benjamin Baker Bruce in 2016.
\end{abstract}

\section{Introduction and Abbreviations}

%%%%%%%%%%%
\subsection{\textbf{Forms 269, 233, and 304}}
We study the status of certain weak choice principles in a recent permutation
model constructed by Halbeisen and Tachtsis in \cite[Theorem 8]{HT2020}.
  
\subsubsection{Necessary weak choice forms}
Let $X$ and $Y$ be sets. We write $\vert X\vert \leq \vert Y\vert$ if there is an injection $f: X \rightarrow Y$.
\begin{itemize}

\item \cite[Form 269]{HR1998}: 
For every set $X$ there is a set $A$ such that there exists a choice function on the collection $[A]^2$ of two-element subsets of $A$ and satisfying $\vert X\vert\leq \vert2^{A^{2}}\vert$. 

\item \cite[Form 233]{HR1998}: If a field has an algebraic closure then it is unique up to isomorphism. 

\item \cite[Form 304]{HR1998}: There does not exist an infinite Hausdorff space $X$ such that every infinite subset of $X$ contains an infinite compact subset.

\item $\mathsf{AC^{LO}}$ \cite[Form 202]{HR1998}: Every linearly ordered family of non-empty sets has a 
choice function.

\item $\mathsf{LW}$ \cite[Form 90]{HR1998}: Every linearly ordered set can be well-ordered.    
    
\item $\mathsf{AC^{WO}}$ \cite[Form 40]{HR1998}: Every well-orderable set of non-empty sets has a choice 
function.  

\item $\mathsf{AC_{n}^{-}}$ for each $n \in \omega, n \geq 2$ \cite[Form 342($n$)]{HR1998}: Every infinite family $\mathcal{A}$ of $n$-element sets has a partial choice function, i.e., $\mathcal{A}$ has an infinite subfamily $\mathcal{B}$ with a choice function. 

\item The {\em Chain/Antichain Principle}, $\mathsf{CAC}$ \cite[Form 217]{HR1998}: Every infinite partially ordered set (poset) has an infinite chain or an infinite antichain.

\item \cite[Form 64]{HR1998}: There are no amorphous sets (An inﬁnite set $X$ is {\em amorphous} if $X$ cannot be written as a disjoint union of two infinite subsets).
\end{itemize}

\subsubsection{Results} Pincus proved that \cite[Form 233]{HR1998} holds in the basic Fraenkel model (cf. \cite[Note 41]{HR1998}). It is also known that in the basic Fraenkel model
\cite[Form 269]{HR1998} fails, whereas \cite[Form 304]{HR1998} holds (cf. \cite[Notes 91, 116]{HR1998}). Fix a natural number $2\leq n\in\omega$. Halbeisen--Tachtsis \cite[Theorem 8]{HT2020} constructed a permutation model (we denote by $\mathcal{N}_{HT}^{1}(n)$) where $\mathsf{AC_{n}^{-}}$ fails but $\mathsf{CAC}$ holds. We prove the following (cf. Theorem 3.1, Theorem 3.3):

\begin{enumerate}
    \item  In $\mathsf{ZFA}$, $\mathsf{AC^{LO}}$ does not imply Form 269. 
    
    \item Form 269 fails in $\mathcal{N}_{HT}^{1}(n)$ whereas Form 233 and Form 304 hold in $\mathcal{N}_{HT}^{1}(n)$. Consequently, for any integer $n\geq 2$, Form 233 (similarly Form 304) neither implies $\mathsf{AC_{n}^{-}}$
    nor implies `There are no amorphous sets'
    in $\mathsf{ZFA}$.

\end{enumerate}
%%%%%%%%%%%%%%%%%%
\subsection{Partition models and permutations of infinite sets} 
We study the failure of certain weak choice principles in the finite partition model introduced by Bruce in \cite{Bru2016}. 

\subsubsection{Necessary weak choice forms and abbreviations} 
A set $X$ is {\em almost even} if there is a permutation $f$ of $X$ without fixed points and such that $f^{2} = $id$_{X}$.
\begin{itemize}
    \item $\mathsf{AC_{n}}$ for each $n \in \omega, n \geq 2$ \cite[Form 61]{HR1998}: Every family of $n$-element sets has a choice function.
    \item $\mathsf{DF = F}$ \cite[Form 9]{HR1998}: Every Dedekind-finite set is finite (A set $X$ is called {\em Dedekind-finite} if $\aleph_{0} \not
    \leq \vert X\vert$ i.e., if there is no one-to-one function $f : \omega \rightarrow X$. Otherwise, $X$ is called {\em Dedekind-infinite}).
    \item $\mathsf{W_{\aleph_{\alpha}}}$ (cf. \cite[Chapter 8]{Jec1973}): For every $X$, either $\vert X\vert \leq \aleph_{\alpha}$ or $\vert X\vert \geq \aleph_{\alpha}$. We recall that $\mathsf{W_{\aleph_{0}}}$ is equivalent to $\mathsf{DF=F}$ in $\mathsf{ZF}$.
    
    \item $\mathsf{DC_{\kappa}}$ 
    where $\alpha$ is the ordinal such that $\kappa=\aleph_{\alpha}$
    \cite[Form 87($\alpha$)]{HR1998}: Let $S$ be a non-empty set and let $R$ be a binary relation such that for every $\beta<\kappa$ and every $\beta$-sequence $s =(s_{\epsilon})_{\epsilon<\beta}$ of elements of $S$ there exists $y \in S$ such that $s R y$. Then there is a function $f : \kappa \rightarrow S$ such that for every $\beta < \kappa$, $(f\restriction \beta) R f(\beta)$. We note that $\mathsf{DC_{\aleph_{0}}}$ is a reformulation of $\mathsf{DC}$ (the principle of Dependent Choices \cite[Form 43]{HR1998}). We denote by $\mathsf{DC_{<\lambda}}$ the assertion $(\forall\eta < \lambda)\mathsf{DC_{\eta}}$.

    \item  $\mathsf{UT(WO, WO, WO)}$ \cite[Form 231]{HR1998}: The union of a well-ordered collection of well-orderable sets is well-orderable.
    
    \item \textbf{$\mathsf{(\forall\alpha)UT(\aleph_{\alpha},\aleph_{\alpha},\aleph_{\alpha})}$} \cite[Form 23]{HR1998}: For every ordinal $\alpha$, if $A$ and every member of $A$ has cardinality $\aleph_{\alpha}$, then $\vert \bigcup A\vert=\aleph_{\alpha}$.
    
    \item The {\em Axiom of Multiple Choice, $\mathsf{MC}$} \cite[Form 67]{HR1998}: Every family $\mathcal{A}$ of non-empty sets has a multiple choice function, i.e., there is a function $f$ with domain $\mathcal{A}$ such that for every $A \in \mathcal{A}$, $f(A)$ is a non-empty finite subset of $A$.
    
    \item $\mathsf{\leq\aleph_{0}}$-$\mathsf{MC}$ (cf. \cite[section 1]{HST2016}): For any family $\{A_{i}:i\in I\}$ of non-empty sets, there is a function $F$ with domain $I$ such that for all $i\in I$, $F(i)$ is a non-empty countable (i.e., finite or countably infinite) 
    subset of $A_{i}$.
    \item \cite[Form 3]{HR1998}: For every infinite set $X$, the sets $X$ and $X \times \{0, 1\}$ are equipotent  (i.e., there exists a bijection $f : X \rightarrow X\times\{0, 1\}$).

    \item $\mathsf{ISAE}$ (cf. \cite[section 2]{Tac2019}): Every infinite set is almost even (i.e., for every infinite set $X$, there is a permutation $f$ of $X$ without fixed points and such that $f^{2} = $id$_{X}$).
    \item $\mathsf{EPWFP}$  (cf. \cite[section 2]{Tac2019}): For every infinite set $X$, there exists a permutation of $X$ without fixed points.

\item For a set $A$, Sym$(A)$ and  FSym$(A)$ denote the set of all permutations of $A$ and the set of all $\phi \in$ Sym$(A)$ such that $\{x \in A : \phi(x) \neq x\}$ is finite, respectively (cf. \cite[section 2]{Tac2019}).

\item For a set $A$ of size at least $\aleph_{\alpha}$, $\aleph_{\alpha}$Sym$(A)$ denote the set of all $\phi \in$ Sym$(A)$ such that $\{x \in A : \phi(x) \neq x\}$ has cardinality at most $\aleph_{\alpha}$.

\item $\mathsf{MA(\kappa)}$ for a well-ordered cardinal $\kappa$ (cf. \cite[section 1]{Tac2016}): If $(P,<)$ is a nonempty, c.c.c. quasi order and if $\mathcal{D}$ is a family of $\leq\kappa$ dense sets in $P$, then there is a filter $\mathcal{F}$ of $P$ such that $\mathcal{F}\cap D\not=\emptyset$ for all $D\in \mathcal{D}$. 
\end{itemize}

\subsubsection{Results} Bruce \cite{Bru2016} constructed the finite partition model $\mathcal{V}_{p}$, which is a variant of the basic Fraenkel model (labeled as Model $\mathcal{N}_{1}$ in \cite{HR1998}). Many, but not all, properties of $\mathcal{N}_{1}$ transfer to $\mathcal{V}_{p}$. In particular, Bruce proved that the set of atoms has no amorphous subset in $\mathcal{V}_{p}$ unlike in $\mathcal{N}_{1}$, whereas $\mathsf{UT(WO, WO, WO)}$, $\mathsf{\neg AC_{2}}$, and $\mathsf{\neg (DF=F)}$ hold in $\mathcal{V}_{p}$ as in $\mathcal{N}_{1}$. At the end of the paper, Bruce asked which other choice principles hold in $\mathcal{V}_{p}$ (cf. \cite[section 5]{Bru2016}). We study the status of some weak choice principles in $\mathcal{V}_{p}$. We also study the status of some weak choice principles in a variant of $\mathcal{V}_{p}$ mentioned in \cite[section 5]{Bru2016}. In particular, let $A$ be an uncountable set of atoms, let $\mathcal{G}$ be the group of all permutations of $A$, and let the supports be countable partitions of $A$. We call the corresponding permutation model $\mathcal{V}^{+}_{p}$. At the end of the paper, Bruce asked about the status of different weak choice forms in $\mathcal{V}^{+}_{p}$. Fix any integer $n\geq 2$. We prove the following (cf. Theorem 4.5, Proposition 4.7, Theorem 4.8):
\begin{enumerate}
    \item $\mathsf{W_{\aleph_{\alpha+1}}}$ implies `for any set $X$ of size $\aleph_{\alpha+1}$, Sym(X) $\neq$ $\aleph_{\alpha}$Sym(X)' in $\mathsf{ZF}$.

   \item If $X\in \{\mathsf{EPWFP}, \mathsf{MA(\aleph_{0})}, \mathsf{AC_{n}},  \mathsf{MC}, \mathsf{\leq\aleph_{0}}$-$\mathsf{MC}\}$, then $X$ fails in $\mathcal{V}_{p}$.
   
    \item If $X\in \{\mathsf{EPWFP}, \mathsf{AC_{n}}, \mathsf{W_{\aleph_{1}}}\}$, then $X$ fails in $\mathcal{V}^{+}_{p}$.
\end{enumerate}
%%%%%%%%%%%%%%%%%
\subsection{Van Douwen’s Choice Principle in permutation models} 

Howard, Saveliev, and Tachtsis \cite[p.175]{HST2016} gave an argument to prove that Van Douwen’s Choice Principle ($\mathsf{vDCP}$) holds in the basic Fraenkel model. We modify the argument slightly to prove that $\mathsf{vDCP}$ holds in two recently constructed permutation models (cf. section 5).

\subsubsection{Necessary weak choice forms and abbreviations} 
\begin{itemize}
    \item $\mathsf{UT(\aleph_{0}, \aleph_{0}, cuf)}$ \cite[Form 420]{HR}: Every countable union of countable sets is a cuf set (A set $X$ is called a {\em cuf set} if $X$ is expressible as a countable union of finite sets). 
    
    \item $\mathsf{M(IC, DI)}$ (cf. \cite{KTW2021}): Every infinite compact metrizable space is Dedekind-infinite.
    
    \item $\mathsf{MC(\aleph_{0}, \aleph_{0})}$ \cite[Form 350]{HR1998}: Every denumerable family of denumerable sets has a multiple choice function.
    \item {\em Van Douwen’s Choice Principle, $\mathsf{vDCP}$}: Every family $X = \{(X_{i}, \leq_{i}) : i \in I\}$ of linearly ordered sets
isomorphic with $(\mathbb{Z}, \leq)$ ($\leq$ is the usual ordering on $\mathbb{Z}$) has a choice function.
\end{itemize}

\subsubsection{Results} Howard and Tachtsis \cite[Theorem 3.4]{HT2021} proved that the statement $\mathsf{LW} \land \mathsf{\neg MC(\aleph_{0}, \aleph_{0})}$ has a permutation model, say $\mathcal{M}$.
The authors of \cite[proof of Theorem 3.3]{CHHKR2008} constructed a permutation model $\mathcal{N}$ where $\mathsf{UT(\aleph_{0}, \aleph_{0}, cuf)}$ holds. Keremedis, Tachtsis, and Wajch \cite[Theorem 13]{KTW2021} proved that $\mathsf{LW}$ holds and $\mathsf{M(IC, DI)}$ fails in $\mathcal{N}$. We prove the following (cf. Proposition 5.1):
\begin{enumerate}
    \item $\mathsf{vDCP}$ holds in $\mathcal{N}$ and $\mathcal{M}$.
\end{enumerate}

\subsection{Spanning subgraphs} Fix any $2< n\in \omega$ and any even integer $4\leq m\in \omega$.  Delhomm\'{e}--Morillon \cite[Corollary 1, Remark 1]{DM2006} proved that $\mathsf{AC}$ is equivalent to {\em `Every bipartite connected graph has a spanning subgraph omitting $K_{n,n}$'} as well as {\em `Every connected graph admits a spanning $m$-bush'}. We study new relations between variants of the above statements and weak forms of $\mathsf{AC}$. 

\subsubsection{Necessary weak choice forms and abbreviations} 

\begin{itemize}
    
\item Let $n \in \omega \backslash \{0, 1\}$. 
$\mathsf{AC^{\omega}_{\leq n}}$: Every denumerable family of non-empty sets, each with at most $n$
elements, has a choice function.

\item  $\mathsf{AC_{fin}^{\omega}}$ \cite[Form 10]{HR1998}:  Every denumerable family of non-empty finite sets has a choice function.
\item $\mathsf{AC_{WO}^{WO}}$ \cite[Form 165]{HR1998}:  Every well-orderable family of non-empty well-orderable sets has a choice function.

\item Let $n \in \omega \backslash \{0, 1\}$. 
$\mathsf{AC^{WO}_{\leq n}}$: Every well-orderable family of non-empty sets, each with at most $n$
elements, has a choice function.
\end{itemize}

Fix any $2< k,n\in \omega$ and any even integer $4\leq m\in \omega$. We introduce the following abbreviations.
\begin{itemize}
\item $\mathcal{Q}_{lf,c}^{n}$: Any infinite locally finite connected graph has a spanning subgraph omitting $K_{2,n}$.
\item $\mathcal{Q}_{lw,c}^{k,n}$: Any infinite locally well-orderable connected graph has a spanning subgraph omitting $K_{k,n}$.
\item $\mathcal{P}_{lf,c}^{m}$: Any infinite locally finite connected graph has a spanning $m$-bush. 
\end{itemize}

Let $G$ be a graph. We denote by $P_{G}$, the class of those infinite graphs whose only components are isomorphic with  $G$. For any graph $G_{1}=(V_{G_{1}}, E_{G_{1}})\in P_{G}$, we construct a graph $G_{2}=(V_{G_{2}}, E_{G_{2}})$ as follows: Pick a $t\not\in V_{G_{1}}$. Let $V_{G_{2}}=\{t\}\bigcup V_{G_{1}}$, $E_{G_{1}}\subseteq E_{G_{2}}$ and for each $x\in V_{G_{1}}$, let $\{t,x\}\in E_{G_{2}}$. We denote by $P'_{G}$, the class of graphs of the form $G_{2}$.

\subsubsection{Results} Fix $2< n,k\in \omega$ and $2\leq p,q< \omega$. We prove the following in $\mathsf{ZF}$ (cf. Proposition 6.5 and Proposition 6.6): 

\begin{enumerate}
\item $\mathsf{AC_{\leq n-1}^{\omega}}$ + $\mathcal{Q}_{lf,c}^{n}$ is equivalent to $\mathsf{AC_{fin}^{\omega}}$.

\item $\mathsf{UT(WO,WO,WO)}$ implies $\mathsf{AC_{\leq n-1}^{WO}}$ + $\mathcal{Q}_{lw,c}^{k,n}$ and the later implies $\mathsf{AC_{WO}^{WO}}$.

\item $\mathcal{P}_{lf,c}^{m}$ is equivalent to $\mathsf{AC_{fin}^{\omega}}$ for any even integer $m\geq 4$.

\item $\mathsf{AC_{k^{k-2}}}$ implies {\em `Every graph from the class $P'_{K_{k}}$ has a spanning tree}'.

\item $\mathsf{AC_{k}}$ implies {\em `Every graph from the class $P'_{C_{k}}$ has a spanning tree}'.

\item $(\mathsf{AC_{p^{q-1}q^{p-1}}+AC_{p+q}})$ implies {\em `Every graph from the class $P'_{K_{p,q}}$ has a spanning tree}'.
\end{enumerate}
%%%%%%%%%%%%%%
\section{Basics}
\begin{defn} {\textbf{(Topological definitions)}}
Let $\textbf{X}=(X,\tau)$ be a topological space. We say $\textbf{X}$ is {\em Baire} if for every countable
family $\mathcal{O} = \{O_{n} : n \in \omega\}$ of dense open subsets of $X$,
$\bigcap \mathcal{O}$ is nonempty and dense. We say $\textbf{X}$ is {\em compact} if for every $U \subseteq \tau$ such that $\bigcup U = X$ there is a finite subset $V\subseteq U$ such that $\bigcup V = X$.
The space $\textbf{X}$ is called a {\em Hausdorff (or $T_{2}$-)space} if any two distinct points in $X$ can be separated by disjoint open sets, i.e. if $x$ and $y$ are distinct points of $X$, then there exist disjoint open sets $U_{x}$ and $U_{y}$ such that $x\in U_{x}$ and $y\in U_{y}$. 
\end{defn}

\begin{defn}{\textbf{(Algebraic definitions)}} 
A {\em permutation} on a set $Y$ is a one-to-one correspondence from $Y$ to itself. The set of all permutations on $Y$, with operation defined to be the composition of mappings, is the {\em symmetric group} of $Y$, denoted by $Sym(Y)$. Let $X$ be a finite set. Fix $r\leq \vert X\vert$. A permutation $\sigma \in Sym(X)$ is a {\em cycle of length $r$} if
there are distinct elements $i_{1},...,i_{r}\in X$ such that $\sigma(i_{1})=i_{2},\sigma(i_{2})=i_{3},...,\sigma(i_{r})=i_{1}$ and $\sigma(i)=i$ for all $i\in X\backslash \{i_{1},..., i_{r}\}$. In this case we write $\sigma=(i_{1},...,i_{r})$. A cycle of length 2 is called a {\em transposition}. We recall that $(i_{1},...,i_{r})=(i_{1}, i_{r})(i_{1},i_{r-1})...(i_{1}, i_{2})$. So, every permutation can be written as a product of transpositions. A permutation $\sigma\in Sym(X)$ is an {\em even permutation} if it can be written as the product of an even number of transpositions; otherwise,
it is an {\em odd permutation}. The {\em alternating group} of $X$, denoted by $Alt(X)$, is the group of all even permutations in $Sym(X)$. If $\mathcal{G}$ is a group and $X$ is a set, an {\em action of $\mathcal{G}$ on $X$} is a group homomorphism $F: \mathcal{G} \rightarrow Sym(X)$. If a group $\mathcal{G}$ acts on a set $X$, we say $\text{Orb}_{\mathcal{G}}(x)=\{gx:g\in \mathcal{G}\}$ is the orbit of $x \in X$ under the action of $\mathcal{G}$. We recall that different orbits of the action are disjoint and form a partition of $X$ i.e., $X=\bigcup \{\text{Orb}_{\mathcal{G}}(x): x\in X\}$.
Let $\{G_{i}:i\in I\}$ be an indexed collection of groups. 
Define the following set.
\begin{equation}\prod_{i\in I}^{weak}G_{i}=\left\{f:I\rightarrow \bigcup_{i\in I} G_{i}\;\middle|\; (\forall i\in I)f(i)\in G_{i}, f(i)= 1_{G_{i}} \text{ for all but finitely many } i\right\}. \end{equation} 

The {\em weak direct product} of the groups $\{G_{i}:i\in I\}$ is the set $\prod^{weak}_{i\in I}G_{i}$ with the operation of component-wise multiplicative defined for all $f,g\in \prod^{weak}_{i\in I}G_{i}$ by $(fg)(i)=f(i)g(i)$ for all $i\in I$. A field $\mathcal{K}$ is {\em algebraically closed} if every non-constant polynomial in $\mathcal{K}[x]$ has a root in $\mathcal{K}$.
\end{defn}

\begin{defn}{\textbf{(Combinatorial definitions)}}
The {\em degree} of a vertex $v\in V_{G}$ of a graph $G=(V_{G}, E_{G})$ is the number of edges emerging from $v$.
A graph $G=(V_{G}, E_{G})$ is {\em locally finite} if every vertex of $G$ has a finite degree. We say that a graph $G=(V_{G}, E_{G})$ is {\em locally well-orderable} if for every $v \in V_{G}$, the set of neighbors of $v$ is well-orderable. Given a non-negative integer $n$, a {\em path of length $n$} in the graph $G=(V_{G}, E_{G})$ is a one-to-one finite sequence $\{x_{i}\}_{0\leq i \leq n}$ of vertices such that for each $i < n$, $\{x_{i}, x_{i+1}\} \in E_{G}$; such a path joins $x_{0}$ to $x_{n}$. The graph $G$ is {\em connected} if any two vertices are joined by a path of finite length. 
For each integer $n \geq 3$, an {\em $n$-cycle} of $G$ is a path $\{x_{i}\}_{0\leq i< n}$ such that $\{x_{n-1}, x_{0}\} \in E_{G}$ and an {\em $n$-bush} is any connected graph with no $n$-cycles. We denote by $K_{n}$ the complete graph on $n$ vertices. We denote by $C_{n}$ the circuit of length $n$.
A {\em forest} is a graph with no cycles and a {\em tree}
is a connected forest.
A {\em spanning} subgraph $H=(V_{H}, E_{H})$ of $G=(V_{G}, E_{G})$ is a subgraph that contains all the vertices of $G$ i.e., $V_{H}=V_{G}$.  A {\em complete bipartite graph} is a graph $G=(V_{G}, E_{G})$ whose vertex set $V_{G}$ can be partitioned into two subsets $V_{1}$ and $V_{2}$ such that no edge has both endpoints in the same subset, and every possible edge that could connect vertices in different subsets is a part of the graph.
A complete bipartite graph with partitions of size $\vert V_{1}\vert = m$ and $\vert V_{2}\vert = n$, is denoted by $K_{m,n}$ for any natural number $m,n$.
Let $(P, \leq)$ be a partially ordered set or a poset. 
A subset $D \subseteq P$ is called a {\em chain} if $(D, \leq\restriction D)$ is linearly ordered. A subset $A\subseteq P$ is called an {\em antichain}
if no two elements of $A$ are comparable under $\leq$. The size of the largest antichain of the poset $(P, \leq)$ is known as its {\em width}.  A subset $C \subseteq P$ is called {\em cofinal} in $P$ if for every $x \in P$ there is an element $c \in C$ such that $x \leq c$.
\end{defn}

\subsection{Permutation models.} In this subsection, we provide a brief description of the construction of Fraenkel-Mostowski permutation models of $\mathsf{ZFA}$ from \cite[Chapter 4]{Jec1973}. 
Let $M$ be a model of $\mathsf{ZFA+AC}$ where $A$ is a set of atoms or urelements. Let $\mathcal{G}$ be a group of permutations of $A$.  A set $\mathcal{F}_{1}$ of subgroups of $\mathcal{G}$ is a normal filter on $\mathcal{G}$ if for all subgroups $H, K$ of $\mathcal{G}$, the following holds.

\begin{enumerate}
    \item $\mathcal{G}\in \mathcal{F}_{1}$,
    \item If $H\in \mathcal{F}_{1}$ and $H\subseteq K$, then $K\in \mathcal{F}_{1}$,
    \item If $H\in \mathcal{F}_{1}$ and $K\in \mathcal{F}_{1}$ then $H\cap K\in \mathcal{F}_{1}$,
    \item If $\pi\in \mathcal{G}$ and $H\in \mathcal{F}_{1}$, then $\pi H \pi^{-1}\in \mathcal{F}_{1}$,
    \item For each $a \in A$, $\{\pi \in \mathcal{G} : \pi(a) = a\} \in \mathcal{F}_{1}$.  
\end{enumerate}

Let $\mathcal{F}$ be a normal filter of subgroups of $\mathcal{G}$.
For $x\in M$, we say
\begin{equation}
    sym_{\mathcal {G}}(x) =\{g\in \mathcal {G} : g(x) = x\} \text{ and } \text{fix}_{\mathcal{G}}(x) =\{\phi \in \mathcal{G} : \forall y \in x (\phi(y) = y)\}.
\end{equation}

We say $x$ is {\em symmetric} if $sym_{\mathcal{G}}(x)\in\mathcal{F}$ and $x$ is {\em hereditarily symmetric} if $x$ is symmetric and each element of the transitive closure of $x$ is symmetric. We define the permutation model $\mathcal{N}$ with respect to $\mathcal{G}$ and $\mathcal{F}$, to be the class of all hereditarily symmetric objects. It is well-known that $\mathcal{N}$ is a model of $\mathsf{ZFA}$ (cf. \cite[Theorem 4.1]{Jec1973}). 
A family $\mathcal{I}_{1}$ of subsets of $A$ is a normal ideal if the following holds.\begin{enumerate}\item $\emptyset\in \mathcal{I}_{1}$,\item  If $E \in \mathcal{I}_{1}$ and $F \subseteq E$, then $F \in \mathcal{I}_{1}$,\item  If $E,F \in \mathcal{I}_{1}$, then $E \cup F \in \mathcal{I}_{1}$,\item  If $\pi\in \mathcal{G}$ and $E \in \mathcal{I}_{1}$, then $\pi[E]\in \mathcal{I}_{1}$,\item For each $a\in A$, $\{a\}\in \mathcal{I}_{1}$.\end{enumerate}
If $\mathcal{I}\subseteq\mathcal{P}(A)$ is a normal ideal, then the set $\{$fix$_{\mathcal{G}}(E): E\in\mathcal{I}\}$ generates a normal filter (say $\mathcal{F}_{\mathcal{I}}$) over $\mathcal G$.  Let $\mathcal{N}$ be the permutation model determined by $M$, $ \mathcal{G},$ and $\mathcal{F}_{\mathcal{I}}$. We say $E\in \mathcal{I}$ is a {\em support} of a set $\sigma\in \mathcal{N}$ if fix$_{\mathcal{G}}(E)\subseteq sym_{\mathcal{G}} (\sigma$). 

\begin{lem}
{\em The following hold:
\begin{enumerate}
    \item An element $x$ of $\mathcal{N}$ is well-orderable in $\mathcal{N}$ if and only if {\em fix$_{\mathcal{G}}(x)\in \mathcal{F}_{\mathcal{I}}$} {\em (cf. \cite[Equation (4.2), p.47]{Jec1973})}. Thus, an element $x$ of $\mathcal{N}$ with support $E$ is well-orderable in $\mathcal{N}$ if {\em fix$_{\mathcal{G}}(E) \subseteq$ fix$_{\mathcal{G}}(x)$}.
    
    \item Let $\mathcal{G}$ be a group of permutations of a set of atoms $A$ and let $\mathcal{I}$ be a normal ideal of supports. Let $\mathcal{V}$ be the permutation model given by $\mathcal{G}$ and $\mathcal{I}$. Then for all $\pi \in \mathcal{G}$ and all $x \in \mathcal{V}$ such that $E$ is a support of $x$, {\em $sym_{\mathcal{G}}(\pi x) = \pi$ $sym_{\mathcal{G}}(x)\pi^{-1}$} 
    and {\em fix$_{\mathcal{G}}(\pi E) = \pi$ fix$_{\mathcal{G}}(E)\pi^{-1}$}
    {\em (cf. \cite[proof of Lemma 4.4]{Jec1973})}.
\end{enumerate}
}
\end{lem}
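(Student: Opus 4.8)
The plan is to prove the two items separately, each by unwinding the definitions of the permutation model recalled above; no genuinely new idea is needed, so I will keep the standard bookkeeping terse. For item (1), I would argue both implications. For the forward direction, suppose $x$ is well-orderable in $\mathcal{N}$; since $\mathcal{N}\models\mathsf{ZFA}$, this is witnessed by a bijection $f\in\mathcal{N}$ from some ordinal $\kappa$ onto $x$. Being hereditarily symmetric, $f$ has a support $E\in\mathcal{I}$, so fix$_{\mathcal{G}}(E)\subseteq sym_{\mathcal{G}}(f)$. The key point is that every ordinal is a pure set of the kernel and is therefore fixed by every $\pi\in\mathcal{G}$; hence for $\pi\in sym_{\mathcal{G}}(f)$ one computes $\pi f=\{\langle\alpha,\pi(f(\alpha))\rangle:\alpha<\kappa\}$, and the equation $\pi f=f$ forces $\pi(f(\alpha))=f(\alpha)$ for every $\alpha<\kappa$, i.e.\ $\pi$ fixes every element of the range $x$ of $f$. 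Thus fix$_{\mathcal{G}}(E)\subseteq$ fix$_{\mathcal{G}}(x)$, and since $\mathcal{F}_{\mathcal{I}}$ is the filter generated by $\{$fix$_{\mathcal{G}}(E):E\in\mathcal{I}\}$ and is upward closed, fix$_{\mathcal{G}}(x)\in\mathcal{F}_{\mathcal{I}}$.

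For the converse, suppose fix$_{\mathcal{G}}(x)\in\mathcal{F}_{\mathcal{I}}$, so fix$_{\mathcal{G}}(E)\subseteq$ fix$_{\mathcal{G}}(x)$ for some $E\in\mathcal{I}$. I would work in the ground model $M\models\mathsf{ZFA+AC}$, fix a well-ordering $\langle y_{\alpha}:\alpha<\kappa\rangle$ of $x$, and set $f=\{\langle\alpha,y_{\alpha}\rangle:\alpha<\kappa\}$. Each $y_{\alpha}$ lies in $x\in\mathcal{N}$ and is hence hereditarily symmetric, and each ordinal is symmetric, so every element of the transitive closure of $f$ is symmetric; moreover any $\pi\in$ fix$_{\mathcal{G}}(x)$ fixes each $y_{\alpha}$, so $\pi f=f$ and fix$_{\mathcal{G}}(E)\subseteq$ fix$_{\mathcal{G}}(x)\subseteq sym_{\mathcal{G}}(f)$. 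Therefore $f$ is hereditarily symmetric, i.e.\ $f\in\mathcal{N}$, and it witnesses that $x$ is well-orderable in $\mathcal{N}$. The ``Thus'' clause is then immediate: if $E$ is a support of $x$ with fix$_{\mathcal{G}}(E)\subseteq$ fix$_{\mathcal{G}}(x)$, then fix$_{\mathcal{G}}(x)$ contains a generator of $\mathcal{F}_{\mathcal{I}}$, hence lies in $\mathcal{F}_{\mathcal{I}}$, and the first part applies. The one step that is not purely formal is precisely this converse: one must check that the well-ordering built in $M$ actually belongs to $\mathcal{N}$, and that is exactly where the hypothesis fix$_{\mathcal{G}}(x)\in\mathcal{F}_{\mathcal{I}}$, together with hereditary symmetry of the members of $x$, is used.

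For item (2), both identities reduce to elementary group computations, using that $\mathcal{G}$ acts on $\mathcal{V}$ by $\in$-automorphisms, so that $g(\pi x)=(g\pi)(x)$ with $g\pi$ the composite in $\mathcal{G}$. For any $g\in\mathcal{G}$ one has $g\in sym_{\mathcal{G}}(\pi x)$ iff $g(\pi x)=\pi x$ iff $(\pi^{-1}g\pi)(x)=x$ iff $\pi^{-1}g\pi\in sym_{\mathcal{G}}(x)$ iff $g\in\pi\,sym_{\mathcal{G}}(x)\,\pi^{-1}$; and since $\pi E=\{\pi(a):a\in E\}$, one has $g\in$ fix$_{\mathcal{G}}(\pi E)$ iff $g(\pi a)=\pi a$ for all $a\in E$ iff $(\pi^{-1}g\pi)(a)=a$ for all $a\in E$ iff $\pi^{-1}g\pi\in$ fix$_{\mathcal{G}}(E)$ iff $g\in\pi\,$fix$_{\mathcal{G}}(E)\,\pi^{-1}$. (If desired one can note here that $\pi E\in\mathcal{I}$ and that conjugating fix$_{\mathcal{G}}(E)\subseteq sym_{\mathcal{G}}(x)$ by $\pi$ gives fix$_{\mathcal{G}}(\pi E)\subseteq sym_{\mathcal{G}}(\pi x)$, so $\pi E$ is a support of $\pi x$ — which is how one sees $\mathcal{V}$ is closed under the action of $\mathcal{G}$.)
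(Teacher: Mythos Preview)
Your proof is correct and follows the standard argument; the paper does not actually provide its own proof of this lemma but merely cites Jech's \emph{The Axiom of Choice} (Equation (4.2), p.~47 and the proof of Lemma 4.4) for the two parts, so there is nothing further to compare against.
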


A {\em pure set} in a model $M$ of $\mathsf{ZFA}$ is a set with no atoms in its transitive closure.
The {\em kernel} is the class of all pure sets of $M$. In this paper, 
\begin{itemize}
    \item Fix an integer $n\geq 2$. We denote by $\mathcal{N}_{HT}^{1}(n)$ the permutation model constructed in \cite[Theorem 8]{HT2020}.
    \item We denote by $\mathcal{N}_{1}$ the basic Fraenkel model (cf. \cite{HR1998}).
    
    \item We denote by $\mathcal{N}_{12}(\aleph_{1})$, the following variant of the basic Fraenkel model: Let $A$ be a set of atoms of size $\aleph_{1}$, $\mathcal{G}$ be the group of all permutations of $A$, and the supports are countable subsets of $A$ (cf. \cite{HR1998}).
    
    \item We denote by $\mathcal{N}_{12}(\aleph_{\alpha})$, the following variant of the basic Fraenkel model: Let $A$ be a set of atoms of size $\aleph_{\alpha}$, $\mathcal{G}$ be the group of all permutations of $A$, and the supports are subsets of $A$ with cardinality less than $\aleph_{\alpha}$ (cf. \cite{HR1998}).
    
    \item We denote by $\mathcal{V}_{p}$ the finite partition model constructed  in \cite{Bru2016}.
    \item We denote by $\mathcal{V}_{p}^{+}$ the countable partition model mentioned in \cite[section 5]{Bru2016}.
    \item We denote by $\mathcal{N}_{6}$ L\'{e}vy’s permutation model (cf. \cite{HR1998}).
\end{itemize}

%%%%%%%%%%%%%%%%%%%%%%
\section{\textbf{Form 269}, \textbf{Form 233}, and \textbf{Form 304}}
We recall that $\mathsf{AC^{LO}}$ implies $\mathsf{LW}$ in $\mathsf{ZFA}$ and that the implication is not reversible in $\mathsf{ZFA}$ (cf. \cite{HR1998}).

\begin{thm}
{\em $\mathsf{AC^{LO}}$ does not imply Form 269 in $\mathsf{ZFA}$. So, neither $\mathsf{LW}$ nor $\mathsf{AC^{WO}}$ implies Form 269 in $\mathsf{ZFA}$.}
\end{thm}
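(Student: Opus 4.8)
The plan is to produce a Fraenkel--Mostowski model of $\mathsf{ZFA}$ in which $\mathsf{AC^{LO}}$ holds while Form 269 fails; the natural candidate is $\mathcal{N}_{12}(\aleph_1)$ (a set $A$ of atoms of size $\aleph_1$, $\mathcal{G}$ the full symmetric group of $A$, and supports the countable subsets of $A$). For the positive half I would first reduce $\mathsf{AC^{LO}}$ to $\mathsf{LW}\wedge\mathsf{AC^{WO}}$: the direction I need, $\mathsf{LW}\wedge\mathsf{AC^{WO}}\Rightarrow\mathsf{AC^{LO}}$, is immediate --- given a linearly ordered family, $\mathsf{LW}$ well-orders its index set and then $\mathsf{AC^{WO}}$ supplies a choice function --- so it suffices to verify that $\mathsf{LW}$ and $\mathsf{AC^{WO}}$ both hold in $\mathcal{N}_{12}(\aleph_1)$ by the standard support computations. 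For $\mathsf{LW}$: if $(S,<)$ is linearly orderable with countable support $E$ and some $s\in S$ had a minimal support not contained in $E$, then transposing one such atom with a fresh atom gives a permutation fixing $E$ (hence fixing $S$ and $<$) that sends $s$ to a distinct $s'\in S$ while reversing their order, which is absurd; so every member of $S$ is fixed by every permutation fixing $E$ pointwise, and $S$ is well-orderable in the model. For $\mathsf{AC^{WO}}$: given a well-ordered family $\{X_\xi:\xi<\delta\}$ with countable support $E$, each $X_\xi$ has support $E$, and choosing in the ground model an element $y_\xi\in X_\xi$ with a minimal support $S_\xi\supseteq E$ one argues that the resulting choice function is symmetric with a countable support.

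The failure of Form 269 in $\mathcal{N}_{12}(\aleph_1)$ is the short part, with witness set $X=A$. Suppose toward a contradiction that some $B\in\mathcal{N}_{12}(\aleph_1)$ carried a choice function $c$ on $[B]^2$ together with an injection $f\colon A\to 2^{B^2}$. Fix a countable set $E$ supporting all of $B$, $c$, and $f$, pick distinct atoms $a,b\in A\setminus E$, and let $\tau$ be the transposition interchanging $a$ and $b$; since $\tau$ fixes $E$ pointwise it fixes $B$, $c$, and $f$. From $\tau(f)=f$ and injectivity of $f$ we get $f(b)=\tau(f(a))\neq f(a)$; as $f(a)\subseteq B\times B$ and $\tau$ permutes $B$, this forces $\tau(u)\neq u$ for some $u\in B$. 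Put $u'=\tau(u)\in B$, so $u'\neq u$ and $\{u,u'\}\in[B]^2$; since $\tau$ interchanges $u$ and $u'$ it fixes this pair setwise, so $\tau(c)=c$ gives $c(\{u,u'\})=\tau\bigl(c(\{u,u'\})\bigr)$, whereas $c(\{u,u'\})\in\{u,u'\}$ and $\tau$ fixes neither $u$ nor $u'$ --- a contradiction. Hence Form 269 fails in $\mathcal{N}_{12}(\aleph_1)$.

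Combining the two halves gives that $\mathsf{AC^{LO}}$ does not imply Form 269 in $\mathsf{ZFA}$; and since $\mathcal{N}_{12}(\aleph_1)$ was shown above to satisfy $\mathsf{LW}$ and $\mathsf{AC^{WO}}$ while Form 269 fails there, the same model witnesses that neither $\mathsf{LW}$ nor $\mathsf{AC^{WO}}$ implies Form 269, which is the asserted corollary. I expect essentially all of the work to sit in the positive direction --- pinning down that the chosen model really satisfies the full strength of $\mathsf{AC^{WO}}$ (equivalently $\mathsf{AC^{LO}}$), the case of an uncountably indexed family being the delicate point, resolved via the homogeneity of $\mathcal{N}_{12}(\aleph_1)$ --- whereas the refutation of Form 269 above is a one-paragraph transposition argument that is robust, working verbatim in any permutation model (e.g.\ $\mathcal{N}_1$ or $\mathcal{N}_{HT}^{1}(n)$) possessing a set of atoms that admits a transposition fixing any prescribed support.
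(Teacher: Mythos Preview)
Your proposal is correct and uses the same model and the same core idea as the paper: both exhibit $\mathcal{N}_{12}(\aleph_1)$ (the paper via Tachtsis's equivalent description with $\mathcal{G}'$ the group of permutations moving at most $\aleph_0$ atoms), and both refute Form~269 by producing an involution in $\operatorname{fix}_{\mathcal{G}}(E)$ that moves some element of the candidate set, contradicting the choice function on its pairs. The differences are minor. For the positive half, the paper simply cites Jech and Tachtsis for $\mathsf{AC^{WO}}$ and $\mathsf{AC^{LO}}$ in the model, whereas you reduce to $\mathsf{LW}\wedge\mathsf{AC^{WO}}$ and sketch both directly. For the negative half, the paper (following Note~91 of Howard--Rubin) proves the stronger intermediate fact that in this model \emph{any} set admitting a choice function on its $2$-element subsets is well-orderable, while you instantiate directly with the witness $X=A$; your single-transposition argument is in fact a bit cleaner than the paper's route through Tachtsis's involution construction.

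One caution: your one-line sketch of $\mathsf{AC^{WO}}$ --- ``choose $y_\xi\in X_\xi$ with minimal support $S_\xi\supseteq E$ and argue the resulting choice function has countable support'' --- does not work as stated when the index set $\delta$ is uncountable, since $\bigcup_{\xi<\delta} S_\xi$ need not be countable. You correctly flag this as the delicate point, but ``homogeneity'' alone is not a proof; Jech's actual argument is more involved. Either expand this step substantially or, as the paper does, cite the literature.
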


\begin{proof}
We present two known models.

{\em First model}: Fix a successor aleph $\aleph_{\alpha +1}$. In the proof of \cite[Theorem 8.9]{Jec1973}, Jech proved that $\mathsf{AC^{WO}}$ holds in the permutation model $\mathcal{N}_{12}(\aleph_{\alpha+1})$.

We recall a variant of $\mathcal{N}_{12}(\aleph_{\alpha+1})$ from \cite[Theorem 3.5(i)]{Tac2019}. 
Let $\mathcal{N}$ be the permutation model, in which $A$ is a set of atoms of size $\aleph_{\alpha+1}$, $\mathcal{G}'$ is the group of all permutations of $A$ which move at most $\aleph_{\alpha}$ atoms, and the supports are subsets of $A$ with cardinality less than $\aleph_{\alpha+1}$.
Tachtsis \cite[Theorem 3.5(i)]{Tac2019} proved that $\mathcal{N}=\mathcal{N}_{12}(\aleph_{\alpha+1})$ and $\mathsf{AC^{LO}}$ holds in $\mathcal{N}$. We slightly modify the arguments of \cite[Note 91]{HR1998} to prove that Form 269 fails in $\mathcal{N}$. We show that for any set $X$ in $\mathcal{N}$ if the set $[X]^{2}$ of two-element subsets of $X$ has a choice function, then $X$ is well-orderable in $\mathcal{N}$. Assume that $X$ is such a set and let $E$ be a support of $X$ and of a choice function $f$ on $[X]^{2}$. In order to show that $X$ is well-orderable in $\mathcal{N}$, it is enough to prove that
fix$_{\mathcal{G}'}(E)$ $\subseteq$ fix$_{\mathcal{G}'}(X)$ (cf. Lemma 2.4(1)).  Assume fix$_{\mathcal{G}'}(E)$ $\nsubseteq$ fix$_{\mathcal{G}'}(X)$, then there is a $y \in X$ and a $\phi \in$ fix$_{\mathcal{G}'}(E)$ with $\phi(y) \neq y $. Under such assumptions, Tachtsis constructed a permutation $\psi \in$ fix$_{\mathcal{G}'}(E)$ such
that $\psi(y) \neq y$ but $\psi^{2}(y)=y$ (cf. the proof of $\mathsf{LW}$ in $\mathcal{N}$ from \cite[Theorem 3.5(i)]{Tac2019}). This contradicts our choice of $E$ as a support for a choice function on $[X]^{2}$ since $\psi$ fixes $ \{\psi(y),y\}$ but moves both of its elements. So Form 269 fails in $\mathcal{N}$.

{\em Second model}: We consider the permutation model $\mathcal{N}$ given in the proof of \cite[Theorem 4.7]{Tac2019a} where $\mathsf{AC^{LO}}$ and therefore $\mathsf{LW}$  hold. Following the above arguments and the arguments in \cite[claim 4.10]{Tac2019a}, we can see that Form 269 fails in $\mathcal{N}$.
\end{proof}

We recall a result of Pincus, which we need in order to prove Theorem 3.3.

\begin{lem}{(\textbf{Pincus; \cite[Note 41]{HR1998}})} {\em If $\mathcal{K}$ is an algebraically closed field, if $\pi$ is a non-trivial automorphism of $\mathcal{K}$ satisfying $\pi^{2}= $ {\em id}$_{\mathcal{K}}$, and if $i \in\mathcal{K}$ is a square root of $-1$, then $\pi(i) = -i \neq i$.}
\end{lem}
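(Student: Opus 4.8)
The plan is to locate the fixed field of $\pi$ and feed it into the Artin--Schreier theorem on algebraically closed fields. First I would record the one purely computational fact: since $\pi$ is a ring homomorphism, $\pi(i)^{2}=\pi(i^{2})=\pi(-1)=-1$, so $\pi(i)$ is again a square root of $-1$ in $\mathcal{K}$. Once we know $\mathrm{char}(\mathcal{K})\neq 2$, the polynomial $X^{2}+1$ has exactly the two distinct roots $i$ and $-i$ in the field $\mathcal{K}$, so $\pi(i)\in\{i,-i\}$, and the whole statement reduces to ruling out $\pi(i)=i$ and to checking $-i\neq i$.

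Next I would set $F=\{x\in\mathcal{K}:\pi(x)=x\}$, the subfield of $\pi$-fixed points. Because $\pi$ is non-trivial and $\pi^{2}=\mathrm{id}_{\mathcal{K}}$, the group $\langle\pi\rangle$ has order $2$, so by the standard fact about finite automorphism groups (Artin) the extension $\mathcal{K}/F$ is Galois with $[\mathcal{K}:F]=2$; in particular $F$ is a proper subfield of $\mathcal{K}$ of finite index. Now I invoke the Artin--Schreier theorem: if an algebraically closed field admits a proper subfield of finite index, then that index equals $2$, the subfield is real closed, and the characteristic is $0$. This gives $\mathrm{char}(\mathcal{K})=0$ (hence $2i\neq 0$, so $-i\neq i$, which also retroactively justifies the count of square roots of $-1$ above) and tells us $F$ is real closed.

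Finally, in the real closed field $F$ the element $-1$ is not a square, so $i\notin F$; but $F$ is precisely the fixed field of $\pi$, hence $\pi(i)\neq i$. Combined with $\pi(i)\in\{i,-i\}$ this forces $\pi(i)=-i$, and we have already seen $-i\neq i$ since $\mathrm{char}(\mathcal{K})=0$, which completes the argument.

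The main obstacle — indeed essentially the only substantive ingredient — is the appeal to the Artin--Schreier theorem; everything else is bookkeeping with square roots of $-1$. One could attempt a self-contained argument that an algebraically closed field cannot carry a non-trivial finite-order automorphism fixing a square root of $-1$, but any honest such argument effectively reproves Artin--Schreier, so I would simply cite it. The one point that genuinely needs care is the characteristic: a priori nothing in the hypotheses forbids $\mathrm{char}(\mathcal{K})=2$, where $i=-i=1$ and the conclusion would be false, so it is essential that Artin--Schreier yields $\mathrm{char}(\mathcal{K})=0$ for free from the mere existence of the order-$2$ automorphism $\pi$.
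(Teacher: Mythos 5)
Your argument is correct and complete: the computation $\pi(i)^{2}=-1$, Artin's theorem giving $[\mathcal{K}:F]=2$ for the fixed field $F$ of $\langle\pi\rangle$, and the Artin--Schreier theorem forcing $\mathrm{char}(\mathcal{K})=0$ and $F$ real closed (so $i\notin F$, hence $\pi(i)=-i\neq i$) fit together without gaps, and you rightly flag characteristic $2$ as the point that must be excluded rather than assumed. The paper itself offers no proof of this lemma --- it is quoted from Pincus via Note 41 of Howard--Rubin --- so your write-up is a genuine supplement rather than a parallel of an in-text argument; the route you take is the standard one, and the underlying content of the lemma is exactly the Artin--Schreier dichotomy you invoke. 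One point worth making explicit in this context: the lemma is applied inside Fraenkel--Mostowski models, so it must be a theorem of $\mathsf{ZF}$/$\mathsf{ZFA}$, not merely of $\mathsf{ZFC}$. This is fine, because every ingredient you use is choice-free: linear independence of characters and Artin's lemma concern a single finite extension, and the Artin--Schreier analysis (separability, Sylow reduction, Kummer and Artin--Schreier extensions of prime degree) is finitary field theory carried out inside $\mathcal{K}$; but since the whole weight of your proof rests on that citation, a sentence noting its $\mathsf{ZF}$-validity would make the appeal airtight for the setting in which the lemma is actually used.
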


\begin{thm}
{\em Fix any $2\leq n\in \omega$. There is a model $\mathcal{M}$ of $\mathsf{ZFA}$ where  $\mathsf{AC_{n}^{-}}$ and the statement `there are no amorphous sets' fail.
Moreover, the following hold in $\mathcal{M}$:
\begin{enumerate}
    \item {\em Form 269} fails.
    \item {\em Form 233} holds.
    \item {\em Form 304} holds.
\end{enumerate}
}
\end{thm}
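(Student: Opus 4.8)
The plan is to take $\mathcal{M}$ to be the permutation model $\mathcal{N}_{HT}^{1}(n)$ of \cite[Theorem~8]{HT2020}. That $\mathsf{AC}_{n}^{-}$ fails in $\mathcal{M}$ is exactly \cite[Theorem~8]{HT2020}; moreover, the distinguished partition of the atoms into $n$-element blocks underlying the construction behaves like the set of atoms of the basic Fraenkel model, so the set of blocks is amorphous in $\mathcal{M}$ and `there are no amorphous sets' fails there. It remains to prove $(1)$--$(3)$. Throughout I will use the following features of $\mathcal{N}_{HT}^{1}(n)$, all readily read off from its construction: for a finite support $E$, the group $\mathrm{fix}_{\mathcal{G}}(E)$ is a semidirect product $N\rtimes S$, where $N$ (the subgroup fixing every block setwise) is a product of finite symmetric groups of bounded exponent, hence a torsion group, and $S$ permutes the infinitely many blocks disjoint from $E$ and is an infinite symmetric group; consequently $\mathrm{fix}_{\mathcal{G}}(E)$ is generated by its elements of finite order (every element of $S$ is a product of two involutions), it is transitive on the atoms lying outside $E$ in blocks disjoint from $E$, and for every atom in a minimal support (over $E$) of any object it contains an involution moving that atom.

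For $(1)$, I will first establish, by the argument of \cite[Note~91]{HR1998} already used in the proof of Theorem~3.1, that in $\mathcal{M}$ any set $A$ for which $[A]^{2}$ has a choice function is well-orderable: if $f$ is such a choice function with common support $E$ of $A$ and $f$, and $A$ were not well-orderable, then some $y\in A$ is moved by $\mathrm{fix}_{\mathcal{G}}(E)$, and choosing an involution $\psi\in\mathrm{fix}_{\mathcal{G}}(E)$ moving an atom in a minimal support of $y$ over $E$ yields (via the usual minimality argument) $\psi(y)\neq y$; then $\psi$ fixes the pair $\{y,\psi(y)\}$ but interchanges its two elements, contradicting $\psi\in\mathrm{sym}_{\mathcal{G}}(f)$. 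Given this, if $[A]^{2}$ has a choice function then $A$, hence $A^{2}$, is well-orderable in $\mathcal{M}$, so a support $E_{A}$ of a well-ordering of $A$ fixes $A^{2}$ pointwise and therefore fixes every subset of $A^{2}$; hence $2^{A^{2}}$ is well-orderable in $\mathcal{M}$ (Lemma~2.4(1)). Taking $X$ to be the set of atoms of $\mathcal{M}$ — which is not well-orderable in $\mathcal{M}$ — no $A$ with a choice function on $[A]^{2}$ can satisfy $\lvert X\rvert\le\lvert 2^{A^{2}}\rvert$, so Form~269 fails in $\mathcal{M}$.

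For $(2)$, I will adapt Pincus' argument \cite[Note~41]{HR1998}, using Lemma~3.2. Let $\mathcal{K}\in\mathcal{M}$ possess an algebraic closure $\mathcal{L}$ in $\mathcal{M}$, fix $i\in\mathcal{L}$ with $i^{2}=-1$, and let $E$ be a finite support of $\mathcal{L}$, of its field operations, and of $i$. I claim $\mathrm{fix}_{\mathcal{G}}(E)\subseteq\mathrm{fix}_{\mathcal{G}}(\mathcal{L})$, so that $\mathcal{L}$ is well-orderable in $\mathcal{M}$ by Lemma~2.4(1). Since $\mathrm{fix}_{\mathcal{G}}(E)$ is generated by its torsion elements, it is enough to show that every torsion $\psi\in\mathrm{fix}_{\mathcal{G}}(E)$ acts trivially on $\mathcal{L}$. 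Such a $\psi$ restricts to a finite-order automorphism of the algebraically closed field $\mathcal{L}$; by the Artin--Schreier theorem, a finite-order automorphism of order $>1$ of an algebraically closed field has order exactly $2$ and the field has characteristic $0$, and then Lemma~3.2 gives $\psi(i)=-i\neq i$ — impossible, since $E$ supports $i$. Hence $\psi\!\restriction\!\mathcal{L}=\mathrm{id}$. Consequently every algebraic closure of $\mathcal{K}$ in $\mathcal{M}$ is well-orderable there; given two such closures, both are well-orderable (as is $\mathcal{K}$), and the standard transfinite construction of a $\mathcal{K}$-isomorphism between algebraic closures of a well-orderable field — which needs only a fixed well-ordering of the target to select roots, and so is carried out inside $\mathcal{M}$ — shows they are isomorphic in $\mathcal{M}$. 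Thus Form~233 holds in $\mathcal{M}$.

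For $(3)$, I will follow \cite[Note~116]{HR1998}. Assume for contradiction that $X\in\mathcal{M}$ is an infinite Hausdorff space in which every infinite subset has an infinite compact subset; replacing $X$ by an infinite compact subspace of itself (which again has this property), I may assume $X$ is compact, with a support $E$ of $X$ and its topology. Let $X_{0}$ be the set of $\mathrm{fix}_{\mathcal{G}}(E)$-fixed points of $X$; this is closed (an intersection of fixed-point sets of homeomorphisms of a Hausdorff space) and well-orderable in $\mathcal{M}$. Every infinite well-orderable Hausdorff space has an infinite discrete subspace — a $\mathsf{ZF}$ fact, proved by analysing the Cantor--Bendixson decomposition (the scattered part has infinitely many isolated points, and the crowded part, being well-orderable, admits the standard binary-splitting) — and an infinite discrete subspace has no infinite compact subset; so if $X_{0}$ is infinite we contradict the hypothesis. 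Hence $X_{0}$ is finite, and $X$ is, up to the removal of finitely many points, an infinite Hausdorff space all of whose points are moved by $\mathrm{fix}_{\mathcal{G}}(E)$. The hard part will be to rule this out: after Cantor--Bendixson one is reduced to a crowded, compact, non-well-orderable Hausdorff piece every point of which is moved, and I expect the main obstacle to be showing — using the homogeneity of $\mathcal{N}_{HT}^{1}(n)$ (non-fixed points have infinite $\mathrm{fix}_{\mathcal{G}}(E)$-orbits whose members may be taken with pairwise disjoint atom-supports) — that such a piece contains an infinite closed discrete subspace, contradicting the hypothesis; this is precisely the part of \cite[Note~116]{HR1998} requiring genuine adaptation, since one must work with $\mathrm{fix}_{\mathcal{G}}(E)$ and the block structure in place of the full symmetric group on the atoms of the basic Fraenkel model.
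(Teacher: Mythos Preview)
Your choice of model and your argument for $(1)$ match the paper's exactly: the paper isolates the same involution lemma (its Claim~3.4) and feeds it into the \cite[Note~91]{HR1998} argument just as you do.

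For Form~233 your route differs from the paper's, and in a useful way. The paper first proves Claim~3.4 (an explicit construction of an involution $\delta\in\mathrm{fix}_{\mathcal{G}}(E)$ with $\delta(t)\neq t$, carefully arranged to respect the block structure), applies it to get an order-$2$ automorphism of the algebraically closed field, and then gives a \emph{separate} argument that $E$ already supports every square root of $-1$ (using that otherwise $x^{2}+1=0$ would acquire infinitely many roots). You instead include $i$ in the support from the outset and invoke Artin--Schreier to force any nontrivial finite-order automorphism down to order~$2$, after which Lemma~3.2 gives the contradiction. Since every $\psi\in\mathcal{G}$ moves only finitely many atoms and hence has finite order, your ``torsion generation'' hypothesis is automatic here, and your argument bypasses both the explicit involution construction and the support-of-$i$ computation. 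What the paper's approach buys is that it never leaves Lemma~3.2 and elementary support calculus; what yours buys is brevity, at the cost of importing Artin--Schreier (which is $\mathsf{ZF}$-safe).

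For Form~304 there is a genuine gap: you yourself flag the ``hard part'' as unfinished, and your proposed Cantor--Bendixson reduction to a crowded compact piece with no fixed points is not what the paper does and is not obviously completable. The paper's argument is more direct and avoids this entirely. Given a non-well-orderable Hausdorff $X$ with support $E$, it picks $x\in X$ moved by $\mathrm{fix}_{\mathcal{G}}(E)$, takes a minimal block-closed support $F\supsetneq E$ of $x$, singles out one block $A_{i_{0}}\subseteq F\setminus E$, and shows (citing \cite[Lemma~2]{Tac2016a}) that
\[
f=\{(\psi(x),\psi(A_{i_{0}})):\psi\in\mathrm{fix}_{\mathcal{G}}(F\setminus A_{i_{0}})\}
\]
is a well-defined function whose domain $Y\subseteq X$ is \emph{amorphous} in $\mathcal{M}$. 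Hausdorffness then gives disjoint open $C\ni x$, $D\ni\phi(x)$; amorphousness forces one of $Y\cap C$, $Y\cap D$ to be finite, and the $\mathrm{fix}_{\mathcal{G}}(F\setminus A_{i_{0}})$-orbit of that finite set is an open cover of $Y$ by finite sets, so no infinite subset of $Y$ is compact. This uses the amorphous-orbit machinery specific to $\mathcal{N}_{HT}^{1}(n)$ rather than general Cantor--Bendixson analysis, and it is the step your proposal is missing.
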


\begin{proof}
We consider the permutation model constructed by Halbeisen--Tachtsis \cite[Theorem 8]{HT2020} where for arbitrary integer $n\geq 2$, $\mathsf{AC_{n}^{-}}$ fails. We fix an arbitrary integer $n\geq 2$ and recall the model constructed in the proof of \cite[Theorem 8]{HT2020}.
We start with a model $M$ of $\mathsf{ZFA+AC}$ where $A$ is a countably infinite set of atoms written as a disjoint union $\bigcup\{A_{i}:i\in \omega\}$ where for each $i\in \omega$, $A_{i}=\{a_{i_{1}}, a_{i_{2}},..., a_{i_{n}}\}$ and $\vert A_{i}\vert = n$. The group $\mathcal{G}$ is defined in \cite{HT2020} in a way so that if $\eta\in \mathcal{G}$, then $\eta$ only moves finitely many atoms and for all $i\in \omega$, $\eta(A_{i})=A_{k}$ for some $k\in \omega$. Let $\mathcal{F}$ be the filter of subgroups of $\mathcal{G}$ generated by $\{$fix$_{\mathcal{G}}(E): E\in [A]^{<\omega}\}$. 
We denote by $\mathcal{N}_{HT}^{1}(n)$
the Fraenkel–Mostowski  permutation  model determined by $M$, $\mathcal{G}$, and $\mathcal{F}$. Following point 1 in the proof of \cite[Theorem 8]{HT2020}, both $A$ and $\mathcal{A}=\{A_{i}\}_{i\in \omega}$ are amorphous in $\mathcal{N}_{HT}^{1}(n)$. If $X$ is a set in $\mathcal{N}_{HT}^{1}(n)$, then without loss of generality we may assume that $E=\bigcup_{i=0}^{m} A_{i}$ is a support of $X$ for some $m\in\omega$.

\begin{claim}
{\em Suppose $X$ is not a well-ordered set in $\mathcal{N}_{HT}^{1}(n)$, and let $E=\bigcup_{i=0}^{m} A_{i}$ be a support of $X$  for some $m\in\omega$. Then there is a $t\in X$ with support $F\supsetneq E$ and a permutation $\delta\in$ {\em fix$_{\mathcal{G}}(E)$} such that $\delta^{2}$ is the identity, and $\delta(t)\neq t$.

%and an element $y \in X$ $t \neq y$, $\delta(t) = y$ and $\delta(y) = t$. In particular, $\delta^{2}$ is the identity, and $\delta(t)\neq t$.
}
\end{claim}

\begin{proof}
Since $X$ is not well-ordered, and $E$ is a support of $X$,
fix$_{\mathcal{G}}(E)\nsubseteq$ fix$_{\mathcal{G}} (X)$ by Lemma 2.4(1). So there is a $t \in X$ and a $\psi \in$ fix$_{\mathcal{G}}(E)$ such that $\psi(t) \neq t$. Let $F$ be a support of $t$ containing $E$. Without loss of generality, we may assume that $F$ is a union of finitely many $A_{i}$'s. We sligtly modify the arguments of \cite[claim 4.10]{Tac2019a}.
Let $W = \{a \in A : \psi(a) \neq a\}$. We note that $W$ is finite since if $\eta\in \mathcal{G}$, then $\eta$ only moves finitely many atoms. Let $U$ be a finite subset of $A$ which is disjoint from $F \cup W$ and such that there exists a bijection $H : tr(U) \rightarrow tr((F \cup W)\backslash E)$ (where for a set $x \subseteq A$, $tr(x) = \{i \in \omega : A_{i} \cap x \neq \emptyset\}$) with the property that if $i \in tr((F \cup W)\backslash E)$ is such that $A_{i} \subseteq (F \cup W)\backslash E$ then $A_{H^{-1}(i)} \subseteq U$; otherwise if $A_{i} 
\nsubseteq (F \cup W)\backslash E$,
which means that $A_{i} \cap F = \emptyset$ and $A_{i} \nsubseteq W$, then $\vert W \cap A_{i}\vert =\vert U \cap A_{H^{-1}(i)}\vert$. Let $f : U \rightarrow (F \cup W)\backslash E$ be a bijection such that $\forall i \in tr(U)$, $f \restriction U \cap A_{i}$ is a one-to-one function from $U \cap A_{i}$ onto $((F \cup W)\backslash E) \cap A_{H(i)}$. Let $f' : \bigcup_{i\in tr(U)} A_{i}\backslash (U \cap A_{i}) \rightarrow \bigcup_{i\in tr(U)} A_{H(i)}\backslash(((F \cup W)\backslash E) \cap A_{H(i)})$ be a bijection such that $\forall i \in tr(U)$, $f' \restriction (A_{i}\backslash (U \cap A_{i}))$ is a one-to-one function from $A_{i}\backslash (U \cap A_{i})$ onto $A_{H(i)}\backslash(((F \cup W)\backslash E) \cap A_{H(i)})$. Let 

\begin{center}$\delta = \prod_{u\in U} (u, f (u))\prod_{u\in \bigcup_{i\in tr(U)} A_{i}\backslash (U \cap A_{i})} (u, f' (u))$
\end{center}

be a product of disjoint transpositions. It is clear that $\delta$ only moves finitely many atoms, and for all $i\in \omega$, $\delta(A_{i})=A_{k}$ for some $k\in \omega$. Moreover, $\delta \in$ fix$_{\mathcal{G}}(E)$, $\delta^{2}$ is the identity, and $\delta(t)\neq t$ by the arguments in \cite[claim 4.10]{Tac2019a}. 
\end{proof}

\begin{claim}
{\em In $\mathcal{N}_{HT}^{1}(n)$, the following hold:
\begin{enumerate}
    \item {\em Form 269} fails.
    \item {\em Form 304} holds.
    \item {\em Form 233} holds.
\end{enumerate}
}
\end{claim}
\begin{proof}
(1). Following claim 3.4 and the arguments in the proof of Theorem 3.1, Form 269 fails in $\mathcal{N}_{HT}^{1}(n)$.
    
(2). We modify the arguments of \cite[Note 116]{HR1998} to prove that Form 304 holds in $\mathcal{N}_{HT}^{1}(n)$. Let $X$ be an infinite Hausdorff space in $\mathcal{N}_{HT}^{1}(n)$, and $E=\bigcup_{i\in K} A_{i}$ be a support of $X$ and its topology where $K\in [\omega]^{<\omega}$. We show there is an infinite $Y \subseteq X$ in $\mathcal{N}_{HT}^{1}(n)$ such that $Y$ has no infinite compact subsets in $\mathcal{N}_{HT}^{1}(n)$. If $X$ is well-orderable, then we can use transfinite induction without using any form of choice to finish the proof. Suppose $X$ is not well-orderable in $\mathcal{N}_{HT}^{1}(n)$.
By Lemma 2.4(1), there is an $x \in X$ and a $\phi \in \text{fix}_{\mathcal{G}}(E)$ such that $\phi(x) \neq x$. Let $F=\bigcup_{i\in K'} A_{i}$ be a support of $x$ where $K'\in [\omega]^{<\omega}$. 
Since $E$ is not a support of $x$, $F\backslash E\neq \emptyset$. Without loss of generality assume that $E\subsetneq F$. We also assume that $\{A_{i}:i\in K'\}$ has the fewest possible copies $A_{j}$ outside $\{A_{i}:i\in K\}$. Let $i_{0} \in K'$ such that $A_{i_{0}} \cap E = \emptyset$. We define
\begin{center}
    $f = \{(\psi(x),\psi(A_{i_{0}})) : \psi\in$ fix$_{\mathcal{G}}(F\backslash A_{i_{0}})\}$.
\end{center}

Tacthsis proved that $f$ is a function with $dom(f) \subseteq X$ and $ran(f) = \mathcal{A}\backslash\{A_{i} : i \in K', i \neq i_{0}\}$, where $\mathcal{A} = \{A_{i} : i\in \omega\}$ and $Y=dom(f)$ is an amorphous subset of $X$ (cf. proof of \cite[Lemma 2]{Tac2016a}).
Since $\phi(x)\neq x$ and $X$ is an infinite Hausdorff space, we can choose open sets $C$ and $D$ so that $x \in C$, $\phi(x) \in D$ and $C \cap D = \emptyset$. Since $Y$ is amorphous in $\mathcal{N}_{HT}^{1}(n)$, every subset of $Y$ in the model must be finite or cofinite. Thus at least one of $Y \cap C$ or $Y \cap D$ is finite. We may assume that $Y \cap C$ is finite. Then we can conclude that 
\begin{center}
$\mathcal{C} = \{\psi(C)\cap Y : \psi\in$ fix$_{\mathcal{G}}(F\backslash A_{i_{0}})\}$     
\end{center}
is an open cover for $Y$ and each element of $\mathcal{C}$ is finite. So there is an infinite $Y \subseteq X$ in $\mathcal{N}_{HT}^{1}(n)$ such that for any infinite subset $Z$ of $Y$, $\mathcal{C}$ is an open cover for $Z$ without a finite subcover. 

(3). We follow the arguments due to Pincus from \cite[Note 41]{HR1998} and use claim 3.4 to prove that Form 233 holds in $\mathcal{N}_{HT}^{1}(n)$. 
For the reader's convenience, we write down the proof. Let $(\mathcal{K}, +, \cdot,0,1)$ be a field in $\mathcal{N}_{HT}^{1}(n)$ with finite support $E \subset A$ and assume that $\mathcal{K}$ is algebraically closed. Without loss of generality assume that $E = \bigcup_{i=0}^{m} A_{i}$ for some natural number $m\in \omega$. We show that every element of $\mathcal{K}$ has support $E$ which implies that $\mathcal{K}$ is well-orderable in $\mathcal{N}_{HT}^{1}(n)$ and therefore the standard proof of the uniqueness of algebraic closures (using $\mathsf{AC}$) is valid in $\mathcal{N}_{HT}^{1}(n)$. For the sake of contradiction, assume that $x\in\mathcal{K}$ does not have support $E$. 
By claim 3.4, there is a permutation $\psi\in$ fix$_{\mathcal{G}}(E)$ such that $\psi(x)\neq x$ and $\psi^{2}$ is the identity. The permutation $\psi$ induces an automorphism of $(\mathcal{K}, +, \cdot,0,1)$ and we can therefore apply Lemma 3.2 to conclude that $\psi(i) = - i \neq i$ for some square root $i$ of $-1$ in $\mathcal{K}$. 

On the other hand, we can follow the arguments from \cite[Note 41]{HR1998} to see that for every permutation $\pi$ of $A$ such that $\pi\in$ fix$_{\mathcal{G}}(E)$, $\pi(i)=i$ for every square root $i$ of $-1$ in $\mathcal{K}$. In particular, fix an $i=\sqrt{-1} \in\mathcal{K}$. It is enough to show that $E$ is a support of $i$. We note that $i$ is a solution to the equation $x^{2} + 1 = 0$ all of whose coefficients are fixed by any $\eta\in$ fix$_{\mathcal{G}}(E)$. So if $\eta\in$ fix$_{\mathcal{G}}(E)$, then $\eta(i)$ is also a solution to $x^{2}+ 1 = 0$. Suppose $E$ is not a support of $i$. Let $E'=\bigcup_{i=0}^{m+k} A_{i}$ be a support of $i$ for some natural number $k$ and let $F = E' \backslash E$. Then $F \neq \emptyset$ (since $E$ is not a support of $i$) and $F \cap E = \emptyset$.
By applying \cite[Lemma 1]{Tac2016a} (where Tachtsis proved that if $x \in \mathcal{N}_{HT}^{1}(n)$ and $E_{1}, E_{2}$ are supports of $x$, then $E_{1}\cap E_{2}$ is a support of $x$)\footnote{We note that Tachtsis assumed that a support $E$ has the property that $\forall i\in\omega, A_{i} \subseteq E$ or $A_{i} \cap E = \emptyset$ in order to prove \cite[\textbf{Lemma 1}]{Tac2016a}.}, 
we can see that if $\phi, \phi'\in$ fix$_{\mathcal{G}}(E)$ and $\phi(F) \cap \phi'(F) = \emptyset$, then $\phi(i) \neq \phi'(i)$. Consequently, we can obtain an infinite set $S = \{\phi_{k}(i) : k \in \omega\}$ such that $\phi_{k} \in$ fix$_{\mathcal{G}}(E)$ and, $\phi_{k}(i)$ is in $\mathcal{N}_{HT}^{1}(n)$ for every $k \in \omega$, and for all $k,l \in \omega$, if $k \neq l$ then $\phi_{k}(i) \neq \phi_{l}(i)$. Thus, the equation $x^{2} + 1 = 0$ has 
infinitely many solutions in $\mathcal{K}$, which is a contradiction. 
Thus, $E$ is a support of $i$. This completes the proof of the third assertion.
\end{proof}
\end{proof}

\begin{remark} 
Fix $2\leq n\in\omega$. For each regular $\aleph_{\alpha}$,\footnote{We assume that $\alpha$ is an
ordinal and that $\aleph_{\alpha}$ is the $\alpha^{th}$ infinite initial ordinal (where an ordinal $\beta$ is “initial” if $\beta$ is not equipotent with an ordinal $\gamma \in \beta$).} we denote by $\mathsf{CAC^{\aleph_{\alpha}}_{1}}$ the statement {\em `If in a poset all antichains are finite and all chains have size at most $\aleph_{\alpha}$ and
there exists at least one chain with size $\aleph_{\alpha}$ then the poset has size $\aleph_{\alpha}$'}. In \cite[Theorem 4.3, Remark 4.4]{Ban2} we proved that the statement “For every regular $\aleph_{\alpha}$, $\mathsf{CAC^{\aleph_{\alpha}}_{1}}$” holds in $\mathcal{N}_{HT}^{1}(n)$ and $\mathcal{N}_{1}$. 
We present different proofs to show that the statement “For every regular $\aleph_{\alpha}$, $\mathsf{CAC^{\aleph_{\alpha}}_{1}}$” holds in $\mathcal{N}_{HT}^{1}(n)$ and $\mathcal{N}_{1}$. First, we recall the following result communicated to us by Tachtsis from \cite{Ban2}.

\begin{lem}(cf. \cite[Lemma 4.1, Corollary 4.2]{Ban2})
The following hold:
\begin{enumerate}
    \item The statement {\em `If $(P, \leq)$ is a poset such that P is well-ordered, and if all antichains in P are finite and all chains in P are countable, then P is countable'} holds in any Fraenkel-Mostowski model.
    \item $\mathsf{UT(\aleph_{\alpha},\aleph_{\alpha},\aleph_{\alpha})}$ implies the statement {\em `If $(P, \leq)$ is a poset such that P is well-ordered, and if all antichains in P are finite and all chains in P have size at most $\aleph_{\alpha}$ and there
exists at least one chain with size $\aleph_{\alpha}$, then $P$ has size $\aleph_{\alpha}$'} for any regular $\aleph_{\alpha}$ in $\mathsf{ZF}$.
\end{enumerate}
\end{lem}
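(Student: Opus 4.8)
The plan for (1) is to work inside a fixed Fraenkel--Mostowski model $\mathcal{N}$, built from a ground model $M\models\mathsf{ZFA+AC}$, and to reduce the assertion to a theorem of $\mathsf{ZFC}$ holding in the kernel $K$ of $\mathcal{N}$ (recall that the kernel, being the class of pure sets, is a model of $\mathsf{ZFC}$). So let $(P,\leq)\in\mathcal{N}$ be a poset with $P$ well-orderable in $\mathcal{N}$, all of whose antichains are finite and all of whose chains are countable. Fixing a bijection $f\colon\kappa\to P$ in $\mathcal{N}$ with $\kappa$ an ordinal, I would form the pullback order $R=\{(\xi,\eta)\in\kappa\times\kappa : f(\xi)\leq f(\eta)\}$; since the transitive closure of $R$ contains no atoms, $R$ is pure, so $(\kappa,R)\in K$. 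The point to verify is an absoluteness claim: any subset of $\kappa$ lying in $\mathcal{N}$, and any function $\omega\to\kappa$ lying in $\mathcal{N}$, is pure and hence already lies in $K$, while conversely $K\subseteq\mathcal{N}$; consequently the antichains and chains of $(\kappa,R)$, and the predicates ``finite'', ``countable'' and ``of cardinality $\kappa$'', are computed identically in $\mathcal{N}$ and in $K$. As $f\in\mathcal{N}$ is an isomorphism of $(\kappa,R)$ onto $(P,\leq)$, it then suffices to prove in $\mathsf{ZFC}$ that any poset all of whose antichains are finite and all of whose chains are countable is countable; and this I would get by assuming, toward a contradiction, a counterexample of cardinality $\aleph_{1}$ (pass to a subset), colouring its unordered pairs by ``comparable''/``incomparable'', and applying the Erd\H{o}s--Dushnik--Miller relation $\aleph_{1}\to(\aleph_{1},\aleph_{0})^{2}$, which produces either an infinite antichain or a chain of size $\aleph_{1}$ --- both excluded.

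For (2) no ground model of choice is available, so the plan is to argue directly in $\mathsf{ZF}$, using that $P$ is well-orderable. Fix a well-ordering $\prec$ of $P$ of order type $|P|$. The bound $|P|\geq\aleph_{\alpha}$ is immediate from the given chain of size $\aleph_{\alpha}$, so everything is in the reverse inequality. I would run a transfinite recursion along $\prec$ that distributes the elements of $P$ into a $\prec$-canonical transfinite sequence $\langle L_{\xi}\rangle$ of \emph{levels}, placing each element, when its turn comes, into the least level not yet containing an element $\leq$-comparable with it; then each $L_{\xi}$ is an antichain, hence finite by hypothesis. Using that all chains have size $\leq\aleph_{\alpha}$, that some chain has size $\aleph_{\alpha}$, and that $\aleph_{\alpha}$ is regular, I would argue that this sequence has length of cardinality at most $\aleph_{\alpha}$, so that $P$ is a union of at most $\aleph_{\alpha}$ many finite sets. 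Padding those pieces (and, if needed, the index set) up to cardinality exactly $\aleph_{\alpha}$ by disjoint union with a fixed copy of $\aleph_{\alpha}$, and applying $\mathsf{UT(\aleph_{\alpha},\aleph_{\alpha},\aleph_{\alpha})}$, then gives $|P|\leq\aleph_{\alpha}$, whence $|P|=\aleph_{\alpha}$.

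The main obstacle differs in the two parts. In (1), once the reduction is in place the combinatorial heart is just the classical Erd\H{o}s--Dushnik--Miller theorem applied inside $K\models\mathsf{ZFC}$, and the genuinely load-bearing point is the absoluteness of ``antichain'', ``chain'' and ``countable'' for the pure copy $(\kappa,R)$, which is exactly what licenses transporting the $\mathsf{ZFC}$ statement back into $\mathcal{N}$. In (2), the delicate part is that the whole level decomposition and, above all, the counting of levels must be carried out \emph{entirely within $\mathsf{ZF}$} --- with no appeal to choice for the finite antichains, and no choosing of a witnessing chain for each element --- and then the finite pieces must be packaged so that $\mathsf{UT(\aleph_{\alpha},\aleph_{\alpha},\aleph_{\alpha})}$, whose statement concerns only unions of families of sets each of cardinality exactly $\aleph_{\alpha}$, actually applies; I expect this step, rather than any single clever device, to be where the real effort lies.
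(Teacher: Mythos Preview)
The paper does not actually prove this lemma; it is quoted from \cite{Ban2} (with attribution to Tachtsis) and used as a black box in Remark~3.6. So there is no in-paper proof to compare against, and I can only assess your proposal on its own terms.

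Your argument for (1) is correct and is the natural one. Pulling the order back along the bijection $f:\kappa\to P$ yields a pure structure $(\kappa,R)$; subsets of $\kappa$ and maps $\omega\to\kappa$ lying in $\mathcal N$ are pure and hence lie in the kernel $K$, and conversely $K\subseteq\mathcal N$, so ``chain'', ``antichain'', ``finite'' and ``countable'' for $(\kappa,R)$ are computed the same way in $\mathcal N$ and in $K$. The Erd\H{o}s--Dushnik--Miller relation $\aleph_1\to(\aleph_1,\aleph_0)^2$ in $K\models\mathsf{ZFC}$ then finishes it exactly as you say.

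Your plan for (2), however, has a genuine gap that you yourself flag but do not close. In the greedy antichain decomposition, if $p$ lands at level $\xi$, then for each $\eta<\xi$ there is a witness $q_\eta$ at level $\eta$ comparable to $p$; but these $q_\eta$ are comparable to $p$, not to \emph{one another}, so they do not assemble into a chain and the hypothesis ``all chains have size $\le\aleph_\alpha$'' gives no direct bound on $\xi$. Worse, since each nonempty level contributes at least one element, bounding the number of levels by $\aleph_\alpha$ is essentially the same as bounding $|P|$ by $\aleph_\alpha$, so the decomposition alone is circular: it packages $P$ nicely but does not by itself reduce the cardinality question. What is missing is an independent structural argument---for instance a $\mathsf{ZF}$-version of Dilworth or Dushnik--Miller valid for well-ordered posets---that actually forces the number of pieces (antichain-levels, or dually chains in a chain decomposition) down to $\aleph_\alpha$ before $\mathsf{UT}(\aleph_\alpha,\aleph_\alpha,\aleph_\alpha)$ is invoked. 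Your proposal correctly identifies the endgame (pad and apply $\mathsf{UT}$) but not the mechanism that produces at most $\aleph_\alpha$ pieces; that mechanism is the whole content of the lemma.
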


Fix $\mathcal{N}\in \{\mathcal{N}_{HT}^{1}(n), \mathcal{N}_{1}\}$. Let $(P,\leq)$ be a poset in $\mathcal{N}$ such that all antichains in $P$ are finite and all chains in $P$ have size $\aleph_{\alpha}$. Let $E\in [A]^{<\omega}$ be a support of $(P,\leq)$.

\textbf{Case (i):} Let $\mathcal{N}= \mathcal{N}_{HT}^{1}(n)$. Then for each element $p\in P$, either $\text{Orb}_{\text{fix}_{\mathcal{G}}(E)}(p) = \{\phi(p): \phi\in$ fix$_{\mathcal{G}}(E)\} = \{p\}$ or
$\text{Orb}_{\text{fix}_{\mathcal{G}}(E)}(p)$ is infinite (cf. \cite[Remark 2.2]{Tac2016a}). Following the arguments of \cite[claim 3]{Tac2016a} we can see that for each $p \in P$, $\text{Orb}_{\text{fix}_{\mathcal{G}}(E)}(p)$ is an anti-chain in $P$. So by assumption, $\text{Orb}_{\text{fix}_{\mathcal{G}}(E)}(p) = \{p\}$. Following the arguments of \cite[claim 4]{Tac2016a} we can see that   $\mathcal{O}=\{\text{Orb}_{\text{fix}_{\mathcal{G}}(E)}(p): p\in P\}$ is a well-ordered partition of $P$. Thus $P$ is also well-orderable. The rest follows from Lemma 3.7(2), since $\mathsf{UT(WO,WO,WO)}$ holds in $\mathcal{N}$ and $\mathsf{UT(WO,WO,WO)}$ implies $\mathsf{UT(\aleph_{\alpha},\aleph_{\alpha},\aleph_{\alpha})}$ in any FM-model (cf. \cite[p. 176]{HR1998}).

\textbf{Case (ii):} Let $\mathcal{N}= \mathcal{N}_{1}$. If $P$ is well-orderable, then we are done. Suppose $P$ is not well-orderable. By Lemma 2.4(1), there is a $t \in P$ and a $\pi \in \text{fix}_{\mathcal{G}}(E)$ such that $\pi(t) \neq t$. Let $F \cup \{a\}$ be a support of $t$ where $a\in A\backslash (E\cup F)$. Under such assumptions, Blass \cite[p.389]{Bla1977} proved that

\begin{center}
    $f=\{(\pi(a),\pi(t)):\pi\in \text{fix}_{\mathcal{G}}(E\cup F)\}$
\end{center}

is a bijection from $A \backslash (E \cup F)$ onto $ran(f)\subset P$. Now, $ran(f)=\text{Orb}_{\text{fix}_{\mathcal{G}}(E\cup F)}(t)=\{\pi(t):\pi\in \text{fix}_{\mathcal{G}}(E\cup F)\}$ is an infinite antichain of $P$ (cf. the proof of \cite[Lemma 9.3]{Jec1973}) in $\mathcal{N}$, which contradicts our assumption. 

Fix $k,n\in \omega\backslash\{0,1\}$. Tachtsis \cite[Theorem 3.7, Remark 3.8]{Tac2019b} proved that the statement ``If $P$ is a poset with width $k$ while at least one $k$-element subset of $P$ is an antichain, then $P$ can be partitioned into $k$ chains'', abbreviated as $\mathsf{DT}$, holds in $\mathcal{N}_{1}$ and $\mathcal{N}_{HT}^{1}(2)$. Using the above arguments we can give a different proof of $\mathsf{DT}$ in $\mathcal{N}_{HT}^{1}(n)$ and $\mathcal{N}_{1}$ since $\mathsf{DT}$ for well-ordered infinite posets with finite width is provable in $\mathsf{ZF}$ \cite[Theorem 3.1(i)]{Tac2019b}. 
\end{remark}
%%%%%%%%%%%%%%%%%%%%%
\section{Partition models, weak choice forms, and permutations of infinite sets}
We recall some known results, which we need in order to prove Theorem 4.5 and Theorem 4.8.
\begin{lem}(Keremedis--Herrlich--Tachtsis; cf. \cite[Remark 2.7]{Tac2016}, \cite[Theorem 3.1]{KH1962})
{\em The following hold:
\begin{enumerate}
    \item $\mathsf{AC_{fin}^{\omega} + MA(\aleph_{0})}\rightarrow$ `for every infinite set $X$, $2^{X}$ is Baire'.
    \item `For every infinite set $X$, $2^{X}$ is Baire' $\rightarrow$ `For every infinite set $X$, $\mathcal{P}(X)$ is Dedekind-infinite'.
\end{enumerate}
}
\end{lem}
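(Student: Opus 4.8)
The plan is to prove the two implications separately, in both cases working inside the Cantor cube $2^{X}=\{0,1\}^{X}$ with the product topology, whose canonical clopen base is $\{[s]:s\in\mathrm{Fn}(X,2)\}$, where $\mathrm{Fn}(X,2)$ is the set of finite partial functions from $X$ to $\{0,1\}$ and $[s]=\{f\in 2^{X}:s\subseteq f\}$.

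For (1) I would fix an infinite set $X$, a denumerable family $\{O_{n}:n\in\omega\}$ of dense open subsets of $2^{X}$, and a nonempty basic clopen set $[s]$; producing a point of $[s]\cap\bigcap_{n}O_{n}$ then gives density of the intersection, and the choice $s=\emptyset$ gives nonemptiness. The idea is to apply $\mathsf{MA(\aleph_{0})}$ to the quasi order $P=\{p\in\mathrm{Fn}(X,2):s\subseteq p\}$ ordered by reverse inclusion (nonempty, since $s\in P$), together with the denumerable family $\{D_{n}:n\in\omega\}$, $D_{n}=\{p\in P:[p]\subseteq O_{n}\}$; each $D_{n}$ is dense in $P$, and this is choice-free, since for $q\in P$ the nonempty open set $O_{n}\cap[q]$ contains a basic clopen $[p]$, necessarily with $p\supseteq q$. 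Granting that $P$ is c.c.c., $\mathsf{MA(\aleph_{0})}$ gives a filter $\mathcal{F}$ on $P$ meeting each $D_{n}$; directedness of $\mathcal{F}$ makes $g=\bigcup\mathcal{F}$ a partial function extending $s$, and its extension $\hat{g}$ by $0$ off $\mathrm{dom}(g)$ lies in $[s]$ and in each $[p]\subseteq O_{n}$ with $p\in\mathcal{F}\cap D_{n}$, hence in $[s]\cap\bigcap_{n}O_{n}$.

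The one substantial point, and the place where $\mathsf{AC_{fin}^{\omega}}$ enters, is that $P$ is c.c.c.; since antichains of $P$ are antichains of $\mathrm{Fn}(X,2)$, it suffices to show that every antichain $A\subseteq\mathrm{Fn}(Y,2)$ is countable, $Y$ arbitrary. First, a choice-free induction on $k$ shows the set $A_{k}$ of members of $A$ with $k$-element domain is finite, with an explicit bound $N_{k}$ ($N_{0}=1$): if $A_{k+1}\neq\emptyset$, fix $p_{0}\in A_{k+1}$; each other member of $A_{k+1}$ conflicts with $p_{0}$ at one of the $k+1$ points of $\mathrm{dom}(p_{0})$, and for a fixed such point $x$ the conditions conflicting with $p_{0}$ at $x$ are determined by their restriction to the complement of $\{x\}$, an antichain of $k$-conditions, so $A_{k+1}$ is covered by $k+1$ sets of size $\le N_{k}$, giving $N_{k+1}=1+(k+1)N_{k}$. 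Then $A=\bigsqcup_{k\in\omega}A_{k}$ is a countable union of finite sets, so $\mathsf{AC_{fin}^{\omega}}$ makes $A$ countable.

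For (2), let $X$ be infinite; then $X\times\omega$ is infinite, so $2^{X\times\omega}$ is Baire by hypothesis. For $(n,m)\in\omega\times\omega$ with $n\neq m$ put $D_{n,m}=\{F\in 2^{X\times\omega}:(\exists x\in X)\,F(x,n)\neq F(x,m)\}$; each $D_{n,m}$ is open, and dense because a basic clopen $[t]$ is refined by choosing $x\in X$ outside the finite projection of $\mathrm{dom}(t)$ and extending $t$ to send $(x,n),(x,m)$ to distinct values. The index set $\{(n,m):n\neq m\}$ is canonically countable, so Baireness gives $F\in\bigcap_{n\neq m}D_{n,m}$; then $n\mapsto\{x\in X:F(x,n)=1\}$ is an injection $\omega\to\mathcal{P}(X)$ (for $n\neq m$ the witness $x$ with $F\in D_{n,m}$ separates the $n$-th and $m$-th sets), so $\mathcal{P}(X)$ is Dedekind-infinite. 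The main obstacle is part (1): executing the c.c.c.\ verification inside $\mathsf{ZF}$ so that the only invocation of choice is the passage from the finite layers $A_{k}$ to their union; everything else in both arguments is routine once the correct poset and dense sets are in hand.
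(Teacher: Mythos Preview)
Your argument is correct. The paper does not supply its own proof of this lemma --- it is quoted as a known result with references to Tachtsis and to Keremedis--Herrlich, and then used as a black box in the proof of Theorem~4.5(3) --- so there is nothing in the paper to compare against. Your route through $\mathrm{Fn}(X,2)$ for part~(1), with the c.c.c.\ verified by the elementary layer bound $|A_{k}|\le N_{k}$ and the single appeal to $\mathsf{AC_{fin}^{\omega}}$ to make the union $\bigcup_{k}A_{k}$ countable, and through the auxiliary space $2^{X\times\omega}$ for part~(2), is essentially the standard one and matches what one finds in the cited sources.
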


\begin{lem}(L\'{e}vy; \cite{Lev1962})
{\em $\mathsf{MC}$ if and only if every infinite set has a well-ordered partition into non-empty finite sets.}
\end{lem}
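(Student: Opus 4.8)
The plan is to establish the two implications separately: the forward one by a choice-free transfinite recursion driven by a single multiple-choice function, and the reverse one by a canonical minimisation.

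For the forward direction, assume $\mathsf{MC}$ and let $X$ be infinite. First apply $\mathsf{MC}$ to the family $\mathcal{P}(X)\setminus\{\emptyset\}$ to fix, once and for all, a function $f$ with $f(Y)$ a non-empty finite subset of $Y$ for every non-empty $Y\subseteq X$. With $f$ now fixed, define a transfinite sequence $\langle S_\alpha\rangle$ of subsets of $X$ by recursion: given $S_\beta$ for $\beta<\alpha$, set $R_\alpha=X\setminus\bigcup_{\beta<\alpha}S_\beta$ and let $S_\alpha=f(R_\alpha)$ if $R_\alpha\neq\emptyset$, and declare the construction finished (say, $S_\alpha=\emptyset$) otherwise. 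Since $f$ was chosen in advance, no further appeal to choice is made. As long as the construction is active, each $S_\alpha$ is a non-empty finite subset of $X$ disjoint from $\bigcup_{\beta<\alpha}S_\beta$, so $\alpha\mapsto S_\alpha$ is injective into $\mathcal{P}(X)$ on the active part; by Hartogs' theorem and Replacement the construction must therefore halt, say at the least ordinal $\gamma$ with $R_\gamma=\emptyset$. By minimality of $\gamma$, the family $\{S_\beta:\beta<\gamma\}$ consists of pairwise disjoint non-empty finite sets with union $X$, and it is well-ordered via $\beta\mapsto S_\beta$; this is the required partition.

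For the reverse direction, assume every infinite set admits a well-ordered partition into non-empty finite sets, let $\mathcal{A}$ be an arbitrary family of non-empty sets, and put $X=\bigcup\mathcal{A}$. If $X$ is finite then each $A\in\mathcal{A}$ is finite, so $A\mapsto A$ is already a multiple choice function. If $X$ is infinite, fix a partition $\{P_\xi:\xi<\kappa\}$ of $X$ into non-empty finite sets indexed by an ordinal $\kappa$. For each $A\in\mathcal{A}$ the set $\{\xi<\kappa:P_\xi\cap A\neq\emptyset\}$ is non-empty (the $P_\xi$ cover $X\supseteq A\neq\emptyset$), so we may let $\xi_A$ be its least element and set $f(A)=P_{\xi_A}\cap A$. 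Then $f(A)$ is a non-empty finite subset of $A$, and $f$ is a function on $\mathcal{A}$, so $\mathsf{MC}$ holds.

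There is no real obstacle here; the only points deserving care are that the recursion in the forward direction invokes $\mathsf{MC}$ exactly once (so that the subsequent recursion is genuinely choice-free) and that it terminates, and that in the reverse direction the rule $f(A)=P_{\xi_A}\cap A$ is completely canonical, so that $\mathcal{A}$ itself need not be well-orderable for the argument to go through. If convenient, one may also note that ``$X$ has a well-ordered partition into non-empty finite sets'' is equivalent to ``$X$ is the union of a well-ordered family of finite sets'' (disjointify $F_\alpha$ to $F_\alpha\setminus\bigcup_{\beta<\alpha}F_\beta$ and discard the empty ones), which is the form in which the statement is sometimes more convenient to apply.
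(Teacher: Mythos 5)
Your proof is correct. The paper does not actually prove this lemma---it only cites L\'{e}vy's 1962 paper---and your argument is the standard one: a single application of $\mathsf{MC}$ to $\mathcal{P}(X)\setminus\{\emptyset\}$ followed by a choice-free transfinite recursion terminating by Hartogs' theorem for the forward direction, and the canonical ``least block meeting $A$'' selection for the converse. Both halves are complete and correctly handle the relevant edge cases.
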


\begin{lem}(Howard--Saveliev--Tachtsis; \cite[Lemma 1.3, Theorem 3.1]{HST2016})
{\em The following hold:
\begin{enumerate}
    \item $\mathsf{\leq\aleph_{0}}$-$\mathsf{MC}$ if and only if every infinite set has a well-ordered partition into non-empty countable sets.
    \item $\mathsf{\leq\aleph_{0}}$-$\mathsf{MC}$ implies ``for every infinite set $X$, $\mathcal{P}(X)$ is Dedekind-infinite”, which in turn is equivalent
to ``for every infinite set $P$ there is a partial ordering $\leq$ on $P$ such that $(P,\leq)$ has a countably infinite disjoint family of cofinal subsets".
\end{enumerate}
}
\end{lem}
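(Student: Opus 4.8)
The plan is to treat the three assertions of the Lemma in turn: (1) is the $\aleph_{0}$-analogue of L\'evy's characterization of $\mathsf{MC}$ (Lemma 4.2) and should go through by copying that proof with ``finite'' replaced by ``countable''; the first implication of (2) follows from (1); and the equivalence in (2) reduces, in its non-trivial direction, to one classical $\mathsf{ZF}$-fact about Dedekind-infiniteness of power sets, which is where I expect all the work to be.

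For (1), I would prove ``$\Rightarrow$'' by transfinite recursion exactly as for $\mathsf{MC}$: given $\mathsf{\leq\aleph_{0}}$-$\mathsf{MC}$ and an infinite $X$, apply the principle to the family $\mathcal{P}(X)\setminus\{\emptyset\}$ indexed by itself to get a function $g$ assigning to each non-empty $Y\subseteq X$ a non-empty countable $g(Y)\subseteq Y$; then set $P_{0}=g(X)$ and $P_{\alpha}=g\!\left(X\setminus\bigcup_{\beta<\alpha}P_{\beta}\right)$ as long as the remainder is non-empty. The $P_{\alpha}$ are non-empty and pairwise disjoint, so the recursion halts at some ordinal $\lambda$ and $\{P_{\alpha}:\alpha<\lambda\}$ is a well-ordered partition of $X$ into non-empty countable sets. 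For ``$\Leftarrow$'', given a family $\{A_{i}:i\in I\}$ of non-empty sets I would disjointify to $\{\{i\}\times A_{i}:i\in I\}$, put $X=\bigcup_{i\in I}(\{i\}\times A_{i})$, and note that if $X$ is finite then all $A_{i}$ are finite and $F(i)=A_{i}$ works, while if $X$ is infinite a well-ordered partition $\{P_{\alpha}:\alpha<\lambda\}$ of $X$ into non-empty countable sets yields $F(i)=\{a:(i,a)\in P_{\alpha_{i}}\}$, where $\alpha_{i}$ is least with $P_{\alpha_{i}}\cap(\{i\}\times A_{i})\neq\emptyset$; this $F(i)$ is a non-empty countable subset of $A_{i}$ and $F$ is defined without any choice.

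For (2): the first implication follows from (1), since if an infinite $X$ has a well-ordered partition $\{X_{\alpha}:\alpha<\lambda\}$ into non-empty countable sets, then either $\lambda<\omega$, in which case $X$ is a finite union of countable sets, hence countable, hence (being infinite) Dedekind-infinite, so $\mathcal{P}(X)$ is Dedekind-infinite, or $\lambda\geq\omega$, in which case $n\mapsto\bigcup_{\alpha<n}X_{\alpha}$ is a strictly increasing $\omega$-chain in $\mathcal{P}(X)$. For the equivalence, ``$\Leftarrow$'' is immediate: a countably infinite pairwise disjoint family of cofinal (hence non-empty, as $P\neq\emptyset$) subsets of any infinite poset $(P,\leq)$ is itself an injection of $\omega$ into $\mathcal{P}(P)$. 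For ``$\Rightarrow$'' I would fix an infinite $P$, use that $\mathcal{P}(P)$ is Dedekind-infinite to obtain (by the classical $\mathsf{ZF}$-fact below) a surjection $\pi:P\to\omega$, fix a partition $\omega=\bigsqcup_{m<\omega}E_{m}$ into infinite pieces, define $x\leq y$ iff $x=y$ or $\pi(x)<\pi(y)$ (a partial order on $P$), and set $C_{m}=\pi^{-1}[E_{m}]$. The $C_{m}$ are non-empty, pairwise disjoint, and each is cofinal, since for $x\in P$ with $\pi(x)=n_{0}$ one picks $n\in E_{m}$ with $n>n_{0}$ and then $x\leq y$ for any $y\in\pi^{-1}(n)\subseteq C_{m}$.

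The only non-routine ingredient — hence the main obstacle — is the $\mathsf{ZF}$-fact invoked in ``$\Rightarrow$'' of (2): that $\aleph_{0}\leq\vert\mathcal{P}(P)\vert$ implies there is a surjection of $P$ onto $\omega$ (equivalently, a countably infinite partition of $P$ into non-empty sets); the converse of this fact is just the easy inclusion already used in the first implication. It is trivial when $P$ is Dedekind-infinite. In the Dedekind-finite case I would take a countably infinite family $\langle A_{n}\rangle$ of distinct subsets of $P$, form the tree $T\subseteq 2^{<\omega}$ of finite initial segments of the characteristic branches $x\mapsto\langle\chi_{A_{n}}(x)\rangle_{n<\omega}$, and observe that $T$ is infinite (being non-empty with no leaves) and, precisely because the $A_{n}$ are pairwise distinct, has infinitely many splitting nodes; since those splitting nodes form a subset of the well-orderable set $2^{<\omega}$ they can be enumerated, and from this enumeration one reads off a surjection of $P$ onto $\omega$. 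Making that reading-off both well-defined and genuinely onto $\omega$ (rather than onto a bounded, or merely infinite, set) is the delicate bookkeeping, and I would cite it as a known result rather than reproduce it.
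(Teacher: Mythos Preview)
The paper does not prove this lemma at all: it is stated as Lemma~4.3 with attribution to Howard--Saveliev--Tachtsis \cite[Lemma 1.3, Theorem 3.1]{HST2016} and then used as a black box in the proof of Theorem~4.5(4). There is therefore no in-paper argument to compare your proposal against.

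That said, your proposal is a correct reconstruction of the result. Part~(1) is exactly the countable analogue of L\'evy's argument for $\mathsf{MC}$, and your transfinite recursion and ``least-index-block'' choice function are the standard moves. In part~(2), your derivation of the first implication from~(1) is fine (the case split on $\lambda<\omega$ versus $\lambda\geq\omega$ is the right one, and a finite union of countable sets is indeed countable in $\mathsf{ZF}$ by finite choice of injections). For the equivalence, your construction of the partial order from a surjection $\pi:P\to\omega$ is clean and correct. You are also right to flag the one genuinely nontrivial ingredient: the $\mathsf{ZF}$-fact that $\aleph_{0}\leq\vert\mathcal{P}(P)\vert$ implies the existence of a surjection $P\to\omega$ (equivalently, a countably infinite partition of $P$). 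Your tree sketch is on the right track --- the key point being that only finitely many branches through the characteristic tree would force only finitely many distinct $A_{n}$ --- but extracting an honest surjection onto $\omega$ from the infinite set of splitting nodes does require some care, and citing it (it is due to Kuratowski and appears, e.g., in Jech's monograph) is the sensible choice here, just as the paper itself cites the whole lemma.
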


\begin{lem}{(Tachtsis; \cite[Theorem 3.1]{Tac2019})} 
{\em The following hold:
\begin{enumerate}
    \item Each of the following statements implies the one beneath it:
\begin{enumerate}
    \item {\em Form 3};
    \item $\mathsf{ISAE}$;
    \item $\mathsf{EPWFP}$;
    \item For every infinite set $X$, Sym($X$) $\neq$ FSym($X$).
\end{enumerate}
\item $\mathsf{DF = F}$ implies ``For every infinite set $X$, Sym($X$) $\neq$ FSym($X$)".
\end{enumerate}
}
\end{lem}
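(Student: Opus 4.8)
The plan is to prove the four items of part~(1) through the chain of implications (a)$\Rightarrow$(b)$\Rightarrow$(c)$\Rightarrow$(d), and then to establish part~(2) by a direct construction; every argument is carried out in $\mathsf{ZF}$, with no appeal to a choice principle.

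For (a)$\Rightarrow$(b) I would argue as follows. Let $X$ be infinite; Form~3 supplies a bijection $g\colon X\to X\times\{0,1\}$. The map $\tau\colon X\times\{0,1\}\to X\times\{0,1\}$ given by $\tau(x,i)=(x,1-i)$ is a permutation of $X\times\{0,1\}$ with no fixed point and with $\tau^{2}=\mathrm{id}$. Pulling it back along $g$, set $f=g^{-1}\circ\tau\circ g$. Then $f$ is a permutation of $X$, $f^{2}=g^{-1}\circ\tau^{2}\circ g=\mathrm{id}_{X}$, and $f$ has no fixed point because $g$ is injective and $\tau$ has none; hence $X$ is almost even. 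As $X$ was arbitrary, $\mathsf{ISAE}$ follows. The step (b)$\Rightarrow$(c) is immediate: if $X$ is infinite and almost even, the involution $f$ witnessing this is in particular a permutation of $X$ without fixed points, which is exactly what $\mathsf{EPWFP}$ demands. For (c)$\Rightarrow$(d): given an infinite $X$ and a fixed-point-free $f\in\mathrm{Sym}(X)$ produced by $\mathsf{EPWFP}$, the set $\{x\in X:f(x)\neq x\}$ equals $X$ and so is infinite; therefore $f\notin\mathrm{FSym}(X)$, whence $\mathrm{Sym}(X)\neq\mathrm{FSym}(X)$.

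For part~(2), I would assume $\mathsf{DF=F}$ and let $X$ be infinite. Then $X$ is Dedekind-infinite, so I may fix an injection $n\mapsto d_{n}$ of $\omega$ into $X$ and put $D=\{d_{n}:n\in\omega\}$. Define $f\colon X\to X$ by $f(d_{2n})=d_{2n+1}$ and $f(d_{2n+1})=d_{2n}$ for every $n\in\omega$, and $f(x)=x$ for $x\in X\setminus D$. This $f$ is a permutation of $X$ moving every point of the infinite set $D$, so $f\notin\mathrm{FSym}(X)$ and again $\mathrm{Sym}(X)\neq\mathrm{FSym}(X)$.

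I do not anticipate a genuine obstacle here: the only point demanding attention is that each construction lives in $\mathsf{ZF}$, which it does, since the bijection used in (a)$\Rightarrow$(b) and the injection $\omega\to X$ used in~(2) are each handed to us outright by the relevant hypothesis, so no family of simultaneous choices is ever invoked. Conceptually, both Form~3 and $\mathsf{DF=F}$ yield, uniformly from $X$, a fixed-point-free involution on $X$ (on all of $X$ in the first case, on a denumerable subset in the second), and the presence of such an involution is precisely what separates $\mathrm{Sym}(X)$ from $\mathrm{FSym}(X)$.
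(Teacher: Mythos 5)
Your proof is correct and is the standard argument: the paper itself does not prove this lemma but simply cites it from Tachtsis (Theorem 3.1 of the referenced paper), and your chain Form 3 $\Rightarrow$ $\mathsf{ISAE}$ $\Rightarrow$ $\mathsf{EPWFP}$ $\Rightarrow$ $\mathrm{Sym}(X)\neq\mathrm{FSym}(X)$, together with the $\mathsf{DF=F}$ argument via an injection of $\omega$ into $X$ and a pairwise swap on its range, is exactly the expected $\mathsf{ZF}$ reasoning. No gaps.
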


\subsection{Weak choice forms in the finite partition model} We recall the finite partition model $\mathcal{V}_{p}$ from \cite{Bru2016}. In order to describe $\mathcal{V}_{p}$, we start with a model $M$ of $\mathsf{ZFA+AC}$ where $A$ is a  countably infinite set of atoms. Let $\mathcal{G}$ be the group of all permutations of $A$, $S$ be the set of all finite partitions of $A$, and $\mathcal{F}$ = $\{H:$ $H$ is a subgroup of $\mathcal{G}$, $H \supseteq$ fix$_{\mathcal{G}}(P)$ for some $P \in S\}$ be a normal filter of subgroups of $\mathcal{G}$. The model $\mathcal{V}_{p}$ is the permutation model determined by $M$, $\mathcal{G}$ and $\mathcal{F}$. In $\mathcal{V}_{p}$, $A$ has no infinite amorphous subset (cf. \cite[Proposition 4.3]{Bru2016}).

\begin{thm}
{\em The following hold in $\mathcal{V}_{p}$:
\begin{enumerate}
    \item If $X\in \{${\em Form 3}$, \mathsf{ISAE}, \mathsf{EPWFP}\}$, then $X$ fails.
    \item $\mathsf{AC_{n}}$ fails for any integer $n\geq 2$.
    \item $\mathsf{MA(\aleph_{0})}$ fails.
    \item If $X\in \{\mathsf{MC}, \mathsf{\leq\aleph_{0}}$-$\mathsf{MC}\}$, then $X$ fails.
\end{enumerate}
}
\end{thm}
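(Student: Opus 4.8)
The plan is to exploit the defining feature of $\mathcal{V}_p$: a set $x$ is hereditarily symmetric there exactly when some finite partition $P=\{P_1,\dots,P_k\}$ of $A$ supports it, meaning every $\pi\in\mathcal{G}$ fixing each block of $P$ setwise satisfies $\pi(x)=x$. Since $A$ is infinite and $P$ has finitely many blocks, at least one block $P_j$ is infinite, and inside it $\mathrm{fix}_{\mathcal{G}}(P)$ acts as the full group $\mathrm{Sym}(P_j)$; in particular every transposition $(a,b)$ with $a,b$ in one block of $P$ lies in $\mathrm{fix}_{\mathcal{G}}(P)$. Combined with Lemma 2.4(1), which identifies the well-orderable sets of $\mathcal{V}_p$ as those whose support fixes them pointwise, this yields at once that $A$, and more generally any infinite block of a finite partition, is not well-orderable — hence not countable — in $\mathcal{V}_p$: otherwise, taking the common refinement of that support with $P$ would produce a finite partition whose fix-group still fixes that infinite block pointwise while containing transpositions moving atoms of it.

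For item (1), by Lemma 4.4(1) it is enough to refute $\mathsf{EPWFP}$, and we take $A$ itself as witness. Let $f$ be a permutation of $A$ in $\mathcal{V}_p$ with support $P$ and let $P_j$ be an infinite block of $P$. For $a,b\in P_j$ the inclusion $(a,b)\in\mathrm{fix}_{\mathcal{G}}(P)\subseteq\mathrm{sym}_{\mathcal{G}}(f)$ gives the commutation identity $f\circ(a,b)=(a,b)\circ f$, so $f(b)=(a,b)(f(a))$; injectivity of $f$ and $a\ne b$ then force $f(a)\in\{a,b\}$, and letting $b$ range over $P_j\setminus\{a\}$ we conclude $f(a)=a$. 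Thus $f$ fixes every atom of $P_j$, so $f$ is not fixed-point-free and $\mathsf{EPWFP}$, hence also $\mathsf{ISAE}$ and Form 3, fails in $\mathcal{V}_p$. For item (2), fix $n\ge2$ and consider the family $[A]^n$; it lies in $\mathcal{V}_p$ (it is $\mathcal{G}$-invariant and each of its members is supported by the finite partition into the singletons of its atoms together with the remainder of $A$). A choice function $c$ on $[A]^n$ in $\mathcal{V}_p$ has a support $P$; choosing an $n$-element $B$ inside an infinite block of $P$, every transposition of two atoms of $B$ fixes $B$ setwise and lies in $\mathrm{fix}_{\mathcal{G}}(P)$, hence fixes $c(B)$, forcing $c(B)\notin B$ — a contradiction. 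So $\mathsf{AC_n}$ fails in $\mathcal{V}_p$ for every $n\ge2$.

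For item (3) we combine Lemma 4.1 with two observations. First, $\mathsf{AC_{fin}^\omega}$ holds in $\mathcal{V}_p$: a denumerable family of non-empty finite sets is a well-orderable family of well-orderable sets, so by $\mathsf{UT(WO,WO,WO)}$ — proved for $\mathcal{V}_p$ by Bruce — its union is well-orderable and a choice function exists. Second, $\mathcal{P}(A)$ is Dedekind-finite in $\mathcal{V}_p$: an injection $\omega\to\mathcal{P}(A)$ with support $P$ sends each $n$ to a subset of $A$ supported by $P$, hence to a union of blocks of $P$, so it takes at most $2^{|P|}$ distinct values. If $\mathsf{MA(\aleph_0)}$ held in $\mathcal{V}_p$, then by Lemma 4.1 $2^A$ would be Baire and therefore $\mathcal{P}(A)$ Dedekind-infinite — a contradiction. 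For item (4), since a finite non-empty set is in particular countable, $\mathsf{MC}$ implies $\mathsf{\leq\aleph_0}$-$\mathsf{MC}$, so it suffices to refute the latter; by Lemma 4.3(1) this means exhibiting an infinite set admitting no well-orderable partition into non-empty countable sets, and $A$ works. If $\mathcal{Q}$ were such a partition of $A$, Lemma 2.4(1) supplies a finite partition $P$ with $\mathrm{fix}_{\mathcal{G}}(P)\subseteq\mathrm{fix}_{\mathcal{G}}(\mathcal{Q})$, so every block of $\mathcal{Q}$ is supported by $P$ and thus a union of blocks of $P$; a $\mathcal{Q}$-block containing an infinite $P$-block would be non-countable in $\mathcal{V}_p$, so every $\mathcal{Q}$-block is a union of finite $P$-blocks, making $\bigcup\mathcal{Q}$ finite — contradicting $\bigcup\mathcal{Q}=A$.

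The main obstacle, and essentially the only place the analysis departs from the basic Fraenkel model $\mathcal{N}_1$, is the behaviour of elements sitting inside the finite blocks of a supporting partition: in $\mathcal{V}_p$, unlike in $\mathcal{N}_1$, having a finite $\mathrm{fix}_{\mathcal{G}}(P)$-orbit does not force an element to be supported by $P$. This is what makes a naive support-counting proof of $\mathsf{AC_{fin}^\omega}$ delicate, and is the reason we route that statement through $\mathsf{UT(WO,WO,WO)}$; items (1), (2) and (4) are insulated from it since they only invoke the infinite blocks of $P$. A secondary point requiring care is confirming that infinite blocks of finite partitions are non-well-orderable — hence non-countable — in $\mathcal{V}_p$, since items (1) and (4) rest on this.
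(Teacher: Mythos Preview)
Your proof is correct and follows essentially the same strategy as the paper, with two minor variations. For item~(1), the paper proves the stronger fact $(\mathrm{Sym}(A))^{\mathcal{V}_p}=\mathrm{FSym}(A)$ and then invokes item~(d) of Lemma~4.4, whereas you show directly that every permutation of $A$ fixes the atoms of any infinite block of its support---simpler, and already enough to refute $\mathsf{EPWFP}$. For item~(4), the paper routes the argument through $\mathsf{UT(WO,WO,WO)}$ (a well-ordered partition of $A$ into countable pieces would, together with $\mathsf{UT(WO,WO,WO)}$, make $A$ itself well-orderable, a contradiction) or alternatively through Lemma~4.3(2) and the Dedekind-finiteness of $\mathcal{P}(A)$, while you give a direct structural analysis of such a partition that avoids $\mathsf{UT(WO,WO,WO)}$; both work, the paper's is shorter. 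Items~(2) and~(3) coincide with the paper's treatment.
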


\begin{proof}
(1). 
By Lemma 4.4, it is enough to show that
(Sym($A$))$^{\mathcal{V}_{p}}$ = FSym($A$).
For the sake of contradiction, assume that
$f$ is a permutation of $A$ in $\mathcal{V}_{p}$, which moves infinitely many atoms.
Let $P =\{P_{j}:j\leq k\}$ be a support of $f$ for some $k\in\omega$.
Without loss of generality, assume that $P_{0},..., P_{n}$ are the singleton and tuple blocks for some $n< k$.
Then there exist $n< i \leq k$ where $a \in P_{i}$ and $b \in \bigcup P\backslash (P_{0}\cup...\cup P_{n} \cup \{a\})$ such that $b = f(a)$. 

\textbf{Case (i):}
Let $b \in P_{i}$. Consider $\phi\in$ fix$_{\mathcal{G}}(P)$ such that $\phi$
fixes all the atoms in all the blocks other than $P_{i}$ and $\phi$ moves every atom in $P_{i}$ except $b$. Thus, $\phi(b)=b$, $\phi(a)\neq a$, and $\phi(f) = f$ since $P$ is the support of $f$. Thus $(a,b)\in f\implies (\phi(a), \phi(b))\in \phi(f)\implies (\phi(a), b)\in f$. So $f$ is not injective; a contradiction. 

\textbf{Case (ii):} Let $b \not\in P_{i}$. Consider $\phi\in$ fix$_{\mathcal{G}}(P)$ such that $\phi$
fixes all the atoms in all the blocks other than $P_{i}$ and $\phi$ moves every atom in $P_{i}$. Then again we can obtain a contradiction as in Case (i).

(2). Fix any integer $n\geq 2$. We show that the set $[A]^{n}$ has no choice function in $\mathcal{V}_{p}$. Assume that $f$ is a choice function of $[A]^{n}$ and let $P$ be a support of $f$.
Since $A$ is countably infinite and $P$ is a finite partition of $A$, there is a $p \in P$ such that $\vert p\vert$ is infinite. Let $a_{1}, a_{2},...,a_{n} \in p$ and $\pi \in $ fix$_{\mathcal{G}}(P)$ be such that $\pi a_{1} = a_{2}$, $\pi a_{2} = a_{3}$,..., $\pi a_{n-1} = a_{n}$, $\pi a_{n} = a_{1}$. Without loss of generality, we assume that $f(\{a_{1}, a_{2},...,a_{n}\})= a_{1}$. Thus,
$\pi f(\{a_{1}, a_{2},...,a_{n}\})= \pi a_{1} \implies f(\{\pi(a_{1}), \pi(a_{2}),...,\pi(a_{n})\})= a_{2} \implies
    f(\{a_{2}, a_{3},...,a_{n}, a_{1}\})= a_{2}.$
Thus $f$ is not a function; a contradiction.

(3). It is known that $\mathcal{P}(A)$ is Dedekind-finite  and $\mathsf{UT(WO,WO,WO)}$ holds in $\mathcal{V}_{p}$ (cf. \cite[Proposition 4.9, Theorem 4.18]{Bru2016}). So $\mathsf{AC_{fin}^{\omega}}$ holds as well. Thus by Lemma 4.1(2), the
statement “for every infinite set $X$, $2^{X}$ is Baire” is false in $\mathcal{V}_{p}$. Hence by Lemma 4.1(1), $\mathsf{MA(\aleph_{0})}$ is false in $\mathcal{V}_{p}$.

(4). Follows from Lemmas 4.2, 4.3(1) and the fact that $\mathsf{UT(WO,WO,WO)}$ holds in $\mathcal{V}_{p}$. Alternatively, we can also use Lemma 4.3(2), to see that $\mathsf{\leq\aleph_{0}}$-$\mathsf{MC}$ fails in $\mathcal{V}_{p}$ since $\mathcal{P}(A)$ is Dedekind-finite in $\mathcal{V}_{p}$. So we may also conclude by Lemma 4.3(2) that the statement “for every infinite set $P$ there is a partial ordering $\leq$ on $P$ such that $(P, \leq)$ has a countably infinite disjoint family of cofinal subsets" fails in $\mathcal{V}_{p}$.
\end{proof}

\subsection{Weak choice forms in the countable partition model}

Let $M$ be a model of $\mathsf{ZFA+AC}$ where $A$ is an uncountable set of atoms and $\mathcal{G}$ is the group of all permutations of $A$. 

\begin{lem}
{\em Let $S$ be the set of all countable partitions of $A$. Then $\mathcal{F}$ = $\{H:$ $H$ is a subgroup of  $\mathcal{G}$, $H \supseteq$ {\em fix$_{\mathcal{G}}(P)$} for some $P \in S\}$ is a normal filter of subgroups of $\mathcal{G}$.}
\end{lem}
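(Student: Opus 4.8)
The plan is to verify the five defining conditions of a normal filter of subgroups directly from the definition given in Section 2, using the fact that the set $S$ of countable partitions of $A$ is closed under the relevant operations. First I would observe the key structural fact: if $P, Q \in S$ are countable partitions of $A$, then their common refinement $P \wedge Q = \{p \cap q : p \in P, q \in Q, p\cap q \neq \emptyset\}$ is again a countable partition of $A$, since it has at most $|P|\cdot|Q| \leq \aleph_0$ blocks; moreover $\mathrm{fix}_{\mathcal{G}}(P \wedge Q) \supseteq \mathrm{fix}_{\mathcal{G}}(P)$ because a permutation fixing every block of $P$ setwise need not fix the finer blocks, so one should instead note that $\mathrm{fix}_{\mathcal{G}}(P) \cap \mathrm{fix}_{\mathcal{G}}(Q) \supseteq \mathrm{fix}_{\mathcal{G}}(P \wedge Q)$ — this is the inclusion actually needed for condition (3).

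Then I would check the conditions in turn. Condition (1), $\mathcal{G} \in \mathcal{F}$, holds because $\mathcal{G} \supseteq \mathrm{fix}_{\mathcal{G}}(P)$ for any $P \in S$ (e.g. the trivial partition $\{A\}$, or any other countable partition). Condition (2), upward closure, is immediate from the definition of $\mathcal{F}$ as the collection of \emph{all} subgroups containing some $\mathrm{fix}_{\mathcal{G}}(P)$: if $H \in \mathcal{F}$ via $P$ and $H \subseteq K$, then $K \supseteq H \supseteq \mathrm{fix}_{\mathcal{G}}(P)$, so $K \in \mathcal{F}$. Condition (3): given $H \supseteq \mathrm{fix}_{\mathcal{G}}(P)$ and $K \supseteq \mathrm{fix}_{\mathcal{G}}(Q)$, the refinement $R = P \wedge Q \in S$ satisfies $\mathrm{fix}_{\mathcal{G}}(R) \subseteq \mathrm{fix}_{\mathcal{G}}(P) \cap \mathrm{fix}_{\mathcal{G}}(Q) \subseteq H \cap K$, so $H \cap K \in \mathcal{F}$. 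Condition (4): for $\pi \in \mathcal{G}$ and $H \supseteq \mathrm{fix}_{\mathcal{G}}(P)$, one uses the standard identity $\pi\, \mathrm{fix}_{\mathcal{G}}(P)\, \pi^{-1} = \mathrm{fix}_{\mathcal{G}}(\pi[P])$ (here $\pi[P] = \{\pi[p] : p \in P\}$ is again a countable partition of $A$, since $\pi$ is a bijection); hence $\pi H \pi^{-1} \supseteq \pi\,\mathrm{fix}_{\mathcal{G}}(P)\,\pi^{-1} = \mathrm{fix}_{\mathcal{G}}(\pi[P])$ with $\pi[P] \in S$, so $\pi H \pi^{-1} \in \mathcal{F}$. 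Condition (5): for each $a \in A$, the partition $P_a = \{\{a\}\} \cup \{A \setminus \{a\}\}$ is countable (two blocks), and any $\phi$ fixing both blocks of $P_a$ setwise must fix $a$, so $\mathrm{fix}_{\mathcal{G}}(P_a) \subseteq \{\pi \in \mathcal{G} : \pi(a) = a\}$, whence the stabilizer of $a$ lies in $\mathcal{F}$.

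I do not anticipate a genuine obstacle here — the argument is entirely parallel to the verification that the finite partitions of $A$ generate a normal filter in Bruce's original construction of $\mathcal{V}_p$, the only change being ``finite'' replaced by ``countable.'' The one point to be slightly careful about is that the collection $S$ of countable partitions is closed under common refinement (condition (3)) and under the $\mathcal{G}$-action (condition (4)), which both hold because countability is preserved under finite products of countable index sets and under bijective images; I would state these closure facts explicitly before running the five-point check. An alternative, even shorter route is to invoke the general principle that for any set $S$ of partitions of $A$ closed under refinement and under the $\mathcal{G}$-action and containing a partition separating each atom, the family $\{\mathrm{fix}_{\mathcal{G}}(P) : P \in S\}$ is a filter base of a normal filter; but since the paper works directly with the five axioms, I would spell out the verification rather than appeal to such a meta-lemma.
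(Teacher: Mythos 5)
Your proposal is correct and follows essentially the same route as the paper's proof: a direct verification of the five normal-filter clauses, using the coarsest common refinement $P_1\wedge P_2$ (countable since the product of two countable sets is countable) for intersections, the conjugation identity fix$_{\mathcal{G}}(\pi P)=\pi\,$fix$_{\mathcal{G}}(P)\pi^{-1}$ for normality, and a countable partition with $\{a\}$ as a singleton block for the stabilizer condition, exactly as in the paper's adaptation of Bruce's argument from finite to countable partitions.
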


\begin{proof}
We modify the arguments of \cite[Lemma 4.1]{Bru2016} and verify the clauses 1-5 of a normal filter (cf. section 2.1).
\begin{enumerate}
    \item We can see that $\mathcal{G}\in \mathcal{F}$.
    \item Let $H \in \mathcal{F}$ and $K$ be a subgroup of $\mathcal{G}$ such that $H \subseteq K$. Then there exists $P \in S$ such that fix$_{\mathcal{G}}(P) \subseteq H$. So, fix$_{\mathcal{G}}(P) \subseteq K$ and $K \in \mathcal{F}$.
    \item Let $K_{1}, K_{2}  \in \mathcal{F}$. Then there exist $P_{1}, P_{2} \in S$ such that fix$_{\mathcal{G}}(P_{1})\subseteq K_{1}$ and fix$_{\mathcal{G}}(P_{2})\subseteq K_{2}$. Let $P_{1} \land P_{2}$ denote the coarsest common refinement of $P_{1}$ and $P_{2}$, given by $P_{1} \land P_{2} = \{p \cap q : p\in P_{1}, q \in P_{2}, p \cap q\neq \emptyset\}$. Clearly, fix$_{\mathcal{G}}(P_{1} \land P_{2})\subseteq$  fix$_{\mathcal{G}}(P_{1})$ $\cap$ fix$_{\mathcal{G}}(P_{2})$ $\subseteq$ $K_{1} \cap K_{2}$. Since the product of two countable sets is countable, $P_{1} \land P_{2}\in S$. Thus $K_{1} \cap K_{2} \in \mathcal{F}$.
    \item Let $\pi \in \mathcal{G}$ and $H \in \mathcal{F}$. Then there exists $P \in S$ such that fix$_{\mathcal{G}}(P) \subseteq H$. Since fix$_{\mathcal{G}}(\pi P)$ = $\pi$ fix$_{\mathcal{G}}(P)\pi^{-1} \subseteq \pi H \pi^{-1}$ by Lemma 2.4(2), it is enough to show $\pi P \in S$.  Clearly, $\pi P$ is countable, since $P$ is countable. Following the arguments of \cite[Lemma 4.1(iv)]{Bru2016} we can see that $\pi P$ is a partition of $A$.
    \item Fix any $a \in A$. Consider any countable partition $P$ of $A$ where $\{a\}$ is a singleton block of $P$. We can see that fix$_{\mathcal{G}}(P)\subseteq\{\pi \in \mathcal{G} :\pi (a) = a\}$. Thus, $\{\pi \in \mathcal{G} :\pi (a) = a\}\in \mathcal{F}$.
\end{enumerate}
\end{proof}

We call the permutation model (denoted by $\mathcal{V}^{+}_{p}$) determined by $M$, $\mathcal{G}$, and $\mathcal{F}$, the countable partition model. Tachtsis \cite[Theorem 3.1(2)]{Tac2019} proved that $\mathsf{DF = F}$ implies ``For every infinite set $X$, Sym(X) $\neq$ FSym(X)'' in $\mathsf{ZF}$. Inspired by that idea we first prove the following.

\begin{prop}
{($\mathsf{ZF}$)} {\em The following hold:
\begin{enumerate}
    \item $\mathsf{W_{\aleph_{\alpha+1}}}$ implies `for any set $X$ of size $\not\leq\aleph_{\alpha}$, Sym(X) $\neq$ $\aleph_{\alpha}$Sym(X)'.
    \item $\mathsf{EPWFP}$ implies `for any set $X$ of size $\not\leq\aleph_{\alpha}$, Sym(X) $\neq$ $\aleph_{\alpha}$Sym(X)'.
\end{enumerate}
}
\end{prop}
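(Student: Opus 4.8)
The plan is, in each case, to exhibit an explicit $\phi\in\mathrm{Sym}(X)$ whose set of non-fixed points $\{x\in X:\phi(x)\neq x\}$ has cardinality $\not\leq\aleph_{\alpha}$; any such $\phi$ witnesses $\mathrm{Sym}(X)\neq\aleph_{\alpha}\mathrm{Sym}(X)$, since by definition every member of $\aleph_{\alpha}\mathrm{Sym}(X)$ moves at most $\aleph_{\alpha}$ points while $\aleph_{\alpha}\mathrm{Sym}(X)\subseteq\mathrm{Sym}(X)$ always. This is the $\aleph_{\alpha}$-level analogue of the argument of Tachtsis recorded in Lemma 4.4 (and in \cite[Theorem 3.1(2)]{Tac2019}) for $\mathrm{Sym}(X)$ versus $\mathrm{FSym}(X)$.

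Part (2) is almost immediate. A set $X$ with $\vert X\vert\not\leq\aleph_{\alpha}$ is infinite, since every finite set injects into $\omega\subseteq\aleph_{\alpha}$; hence $\mathsf{EPWFP}$ applies and furnishes a permutation $f$ of $X$ with no fixed points. Then $\{x\in X:f(x)\neq x\}=X$, which has cardinality $\not\leq\aleph_{\alpha}$, so $f\in\mathrm{Sym}(X)\setminus\aleph_{\alpha}\mathrm{Sym}(X)$, as required.

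For part (1) I would first use $\mathsf{W_{\aleph_{\alpha+1}}}$ to locate a well-orderable $Y\subseteq X$ with $\vert Y\vert=\aleph_{\alpha+1}$. By $\mathsf{W_{\aleph_{\alpha+1}}}$ either $\vert X\vert\geq\aleph_{\alpha+1}$, and such a $Y$ exists directly; or $\vert X\vert\leq\aleph_{\alpha+1}$, in which case $X$ is well-orderable, hence $\vert X\vert$ is an aleph (it is infinite, as above), and the constraints $\vert X\vert\not\leq\aleph_{\alpha}$ and $\vert X\vert\leq\aleph_{\alpha+1}$ force $\vert X\vert=\aleph_{\alpha+1}$, so we may take $Y=X$. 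Since $\aleph_{\alpha+1}$ is an infinite well-ordered cardinal, $Y$ can be partitioned into two-element blocks (fix a bijection $Y\to Y\times\{0,1\}$ and pair each point with its partner). Let $\phi$ be the involution of $X$ that interchanges the two members of each block and fixes every point of $X\setminus Y$; then $\phi\in\mathrm{Sym}(X)$ and $\{x\in X:\phi(x)\neq x\}=Y$ has cardinality $\aleph_{\alpha+1}\not\leq\aleph_{\alpha}$, so $\phi\notin\aleph_{\alpha}\mathrm{Sym}(X)$.

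Both arguments use only elementary set theory, so I do not anticipate any substantial obstacle; the one spot meriting a little care is the case analysis in (1), namely verifying that $\mathsf{W_{\aleph_{\alpha+1}}}$ together with $\vert X\vert\not\leq\aleph_{\alpha}$ genuinely produces a copy of $\aleph_{\alpha+1}$ inside $X$, and recalling that partitioning an infinite well-orderable set into pairs is available in $\mathsf{ZF}$ (it follows from $\vert Y\vert\cdot 2=\vert Y\vert$ for infinite well-orderable $Y$).
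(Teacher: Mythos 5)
Your proof is correct and follows essentially the same route as the paper: both reduce the problem to producing, via $\mathsf{W_{\aleph_{\alpha+1}}}$, a well-orderable subset $Y\subseteq X$ of size $\aleph_{\alpha+1}$ and then extending a fixed-point-free permutation of $Y$ by the identity on $X\setminus Y$, which cannot lie in $\aleph_{\alpha}$Sym$(X)$; part (2) is the same one-line observation the paper calls straightforward. The only differences are cosmetic: the paper argues by contradiction and builds the fixed-point-free bijection on $Y$ by transfinite recursion, whereas you argue directly and use the pairing $\vert Y\vert\cdot 2=\vert Y\vert$ to get an explicit involution, with a slightly more explicit case analysis of how $\mathsf{W_{\aleph_{\alpha+1}}}$ yields $Y$.
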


\begin{proof}
(1). Let $X$ be a set of size $\not\leq\aleph_{\alpha}$ and let us assume Sym$(X)$ = $\aleph_{\alpha}$Sym$(X)$. We prove that there is no injection $f$ from $\aleph_{\alpha+1}$ into $X$. Assume there exists such an $f$. Let $\{y_{n}\}_{n\in \aleph_{\alpha+1}}$ be an enumeration of the elements of $Y=f[\aleph_{\alpha+1}]$. 
We can use transfinite recursion, without using any form of choice, to construct a bijection $h: Y\rightarrow Y$ such that $h(x)\neq x$ for any $x\in Y$. Define $g:X \rightarrow X$ as follows: $g(x)=h(x)$ if $x\in Y$, and $g(x)=x$ if $x\in X\backslash Y$. Clearly $g \in$ Sym$(X)\backslash$ $\aleph_{\alpha}$Sym$(X)$, and hence Sym$(X)$ $\neq $ $\aleph_{\alpha}$Sym$(X)$, a contradiction.

(2). This is straightforward.
\end{proof}

\begin{thm}
{\em The following hold:
\begin{enumerate}
    \item $\mathcal{N}_{12}(\aleph_{1}) \subset \mathcal{V}^{+}_{p}$.
    \item if $X\in \{${\em Form 3}$, \mathsf{ISAE}, \mathsf{EPWFP}\}$, then $X$ fails in $\mathcal{V}^{+}_{p}$.
    \item $\mathsf{AC_{n}}$ fails in $\mathcal{V}^{+}_{p}$ for any integer $n\geq 2$. 
    \item $A$ cannot be linearly ordered.
    \item if $X\in \{\mathsf{W_{\aleph_{1}}}, \mathsf{DC_{\aleph_{1}}}\}$, then $X$ fails in $\mathcal{V}^{+}_{p}$.
\end{enumerate}
}
\end{thm}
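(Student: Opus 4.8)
The plan rests on one structural feature peculiar to $\mathcal{V}^{+}_{p}$: a support is a \emph{countable} partition of the \emph{uncountable} set $A$, so (counting in the ground model, where $\mathsf{AC}$ holds) every support $P$ has at least one uncountable block, and inside such a block every transposition — indeed every finitely supported permutation of that block — belongs to $\mathrm{fix}_{\mathcal{G}}(P)$. This is exactly the mechanism behind Theorem~4.5, with ``infinite block'' upgraded to ``uncountable block'', and it delivers (3) and (4) by transcription. For (3): if $f$ were a choice function on $[A]^{n}$ with support $P$, pick $a_{1},\dots,a_{n}$ in an uncountable block of $P$ and let $\pi=(a_{1}\,\cdots\,a_{n})\in\mathrm{fix}_{\mathcal{G}}(P)\subseteq sym_{\mathcal{G}}(f)$; then $f(\{a_{1},\dots,a_{n}\})=f(\pi\{a_{1},\dots,a_{n}\})=\pi f(\{a_{1},\dots,a_{n}\})$ cannot hold, since $\pi$ fixes no element of $\{a_{1},\dots,a_{n}\}$ — contradiction, as in Theorem~4.5(2). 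Hence $[A]^{n}$ has no choice function and $\mathsf{AC_{n}}$ fails. For (4): if $\le$ linearly ordered $A$ with support $P$ and $a\ne b$ lie in an uncountable block of $P$, then $(a\,b)\in\mathrm{fix}_{\mathcal{G}}(P)\subseteq sym_{\mathcal{G}}(\le)$ reverses the comparison of $a$ and $b$, contradicting antisymmetry; so $A$ cannot be linearly ordered.

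For (1) I would first note that a countable $E\subseteq A$ yields the countable partition $P_{E}=\{\{a\}:a\in E\}\cup\{A\setminus E\}$ with $\mathrm{fix}_{\mathcal{G}}(P_{E})=\mathrm{fix}_{\mathcal{G}}(E)$; consequently the normal filter of $\mathcal{N}_{12}(\aleph_{1})$ is contained in that of $\mathcal{V}^{+}_{p}$, so every hereditarily symmetric object of $\mathcal{N}_{12}(\aleph_{1})$ is hereditarily symmetric in $\mathcal{V}^{+}_{p}$, i.e.\ $\mathcal{N}_{12}(\aleph_{1})\subseteq\mathcal{V}^{+}_{p}$. For properness, fix a countable partition $P_{0}=\{B_{n}:n\in\omega\}$ of $A$ into uncountable blocks (possible since $\aleph_{1}=\aleph_{0}\cdot\aleph_{1}$); then $P_{0}\in\mathcal{V}^{+}_{p}$ because $P_{0}$ supports itself and each $B_{n}$, whereas $B_{0}$ is neither countable nor co-countable and hence has no countable support, so $P_{0}\notin\mathcal{N}_{12}(\aleph_{1})$. (Parts (2)--(5) only use that $A$ is uncountable; part (1) is where $|A|=\aleph_{1}$ is used.)

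The heart of (2) and (5) is a \emph{displacement lemma}: if $f$ is a permutation of $A$ in $\mathcal{V}^{+}_{p}$ with support $P$, and $a$ lies in an uncountable block of $P$ with $f(a)\ne a$, then $f(a)$ lies in a \emph{singleton} block of $P$. Indeed, if the block $Q$ of $f(a)$ had another element, pick such a $c$ with also $c\ne a$ (possible: if $Q$ is $a$'s block it is uncountable, hence has a third element; if not, $a\notin Q$ and any $c\in Q\setminus\{f(a)\}$ works); then $\phi:=(f(a)\,c)\in\mathrm{fix}_{\mathcal{G}}(P)$ fixes $a$ and moves $f(a)$, and $\phi f=f$ forces $f(a)=c\ne f(a)$, a contradiction. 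Since there are only countably many singleton blocks while each uncountable block is uncountable, $f$ must fix a point of every uncountable block; in particular \emph{no permutation of $A$ in $\mathcal{V}^{+}_{p}$ is fixed-point-free}, so $\mathsf{EPWFP}$ fails, and then Form~3 and $\mathsf{ISAE}$ fail by Lemma~4.4(1). Running the same displacement argument on a hypothetical injection $g:\aleph_{1}\to A$ shows $\mathrm{ran}(g)$ would be a union of blocks each of which is a singleton, impossible as $\mathrm{ran}(g)$ is uncountable; thus there is no injection $\aleph_{1}\to A$ in $\mathcal{V}^{+}_{p}$, i.e.\ $|A|\not\ge\aleph_{1}$. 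Combined with (4) — which gives $|A|\not\le\aleph_{1}$, since $|A|\le\aleph_{1}$ would well-order, hence linearly order, $A$ — the set $A$ witnesses $\neg\mathsf{W_{\aleph_{1}}}$. Finally, taking $S=A$ and $sRy\iff y\notin\mathrm{ran}(s)$: every ${<}\aleph_{1}$-sequence of atoms has countable, hence proper, range, so the hypothesis of $\mathsf{DC_{\aleph_{1}}}$ holds, while any witnessing $f:\aleph_{1}\to A$ would be injective — contradicting $|A|\not\ge\aleph_{1}$; so $\mathsf{DC_{\aleph_{1}}}$ fails.

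The one genuinely new piece of work is the displacement lemma and the case-bookkeeping in (2): one must separate ``$f(a)$ in $a$'s block'' from ``$f(a)$ in another block'', exhibit the transposition $\phi$ inside $\mathrm{fix}_{\mathcal{G}}(P)$ in each case, and read off the clash with functionality. Everything else is a routine port of existing arguments — Theorem~4.5, Notes~91 and~116 of \cite{HR1998}, and the Jech-style analysis of $\mathcal{N}_{12}$ — from ``finite'' to ``countable'' and from ``infinite'' to ``uncountable''.
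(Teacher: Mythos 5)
Your proposal is correct and takes essentially the same approach as the paper: the inclusion in (1) via $P_{E}=\{\{a\}:a\in E\}\cup\{A\setminus E\}$ with $\mathrm{fix}_{\mathcal{G}}(P_{E})=\mathrm{fix}_{\mathcal{G}}(E)$, the block-transposition and block-cycle arguments of Theorem 4.5 transplanted from finite to countable partitions for (2)--(4), and the kernel-fixing argument against an injection $\aleph_{1}\to A$ for (5). Your minor deviations --- extracting a fixed point directly from the displacement lemma instead of showing $\mathrm{Sym}(A)=\aleph_{0}\mathrm{Sym}(A)$ and invoking Proposition 4.7, proving (4) by a transposition rather than deducing it from (3), refuting $\mathsf{DC_{\aleph_{1}}}$ directly rather than via $\mathsf{DC_{\aleph_{1}}}\Rightarrow\mathsf{W_{\aleph_{1}}}$, and the added properness argument in (1), which the paper does not supply --- are all sound.
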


\begin{proof}
(1). Let $x\in \mathcal{N}_{12}(\aleph_{1})$ with support $E$. So fix$_{\mathcal{G}}(E) \subseteq sym_{\mathcal{G}}(x)$. Then $P = \{\{a\}\}_{a\in E} \cup \{A\backslash E\}$ is a countable partition of $A$, and fix$_{\mathcal{G}}(P)$ = fix$_{\mathcal{G}}(E)$. Thus fix$_{\mathcal{G}}(P) \subseteq sym_{\mathcal{G}}(x)$ and so $x \in \mathcal{V}^{+}_{p}$ with support $P$.

(2). Similarly to the proof of $\mathsf{\neg EPWFP}$ in $\mathcal{V}_{p}$ (cf. the proof of Theorem 4.5(1)), one may verify that if $f$ is a permutation of $A$ in $\mathcal{V}^{+}_{p}$, then the set $\{x \in A : f(x) \neq x\}$ has cardinality at most $\aleph_{0}$. Since $A$ is uncountable, it follows that `for any uncountable $X$, Sym(X) $\neq$ $\aleph_{0}$Sym(X)' fails in $\mathcal{V}^{+}_{p}$. Consequently, if $X\in \{$Form 3$, \mathsf{ISAE}, \mathsf{EPWFP}\}$, then $X$ fails in $\mathcal{V}^{+}_{p}$ by Proposition 4.7(2). 

(3). Fix any integer $n\geq 2$. Similarly to the proof of Theorem 4.5(2), one may verify that the set $[A]^{n}$ has no choice function in $\mathcal{V}^{+}_{p}$. Consequently, $\mathsf{AC_{n}}$ fails in $\mathcal{V}^{+}_{p}$.

(4). Follows from (3).

(5). We can use the arguments in (2) and Proposition 4.7(1) to show that $\mathsf{W_{\aleph_{1}}}$ fails in $\mathcal{V}^{+}_{p}$. The rest follows from the fact that $\mathsf{DC_{\aleph_{1}}}$ implies $\mathsf{W_{\aleph_{1}}}$ in $\mathsf{ZF}$ (cf. \cite[Theorem 8.1(b)]{Jec1973}). However, we write a different argument. In order to show that $\mathsf{W_{\aleph_{1}}}$ fails in $\mathcal{V}^{+}_{p}$, we prove that there is no injection $f$ from $\aleph_{1}$ into $A$. Assume there exists such an $f$ with support $P$, and let $\pi \in $ fix$_{\mathcal{G}}(P)$ be such that $\pi$ moves every atom in each non-singleton block of $P$. Since $P$ contains only countably many singletons, $\pi$ fixes only countably many atoms. Fix $n \in \aleph_{1}$. Since $n$ is in the kernel (the class of all pure sets), we have $\pi (n) = n$. Thus $\pi(f(n)) = f(\pi(n))= f(n)$. But $f$ is one-to-one, and thus, $\pi$ fixes $\aleph_{1}$ many values of $f$ in $A$, a contradiction.
\end{proof}

\begin{remark}
Let $M$ be a model of $\mathsf{ZFA+AC}$ where $A$ is a set of atoms of cardinality $\aleph_{\alpha+1}$. Let $\mathcal{G}$ be the group of all permutations of $A$, $S$ be the set of all $\aleph_{\alpha}$-partitions of $A$, and $\mathcal{F}$ = $\{H:$ $H$ is a subgroup of  $\mathcal{G}$, $H \supseteq$ fix{\em $_{\mathcal{G}}(P)$} for some $P \in S\}$ be a normal filter of subgroups of $\mathcal{G}$. Let $\mathcal{V}_{p}^{\aleph_{\alpha+1}}$ be the permutation model determined by $M$, $\mathcal{G}$, and $\mathcal{F}$. Following the arguments of Theorem 4.8(1)(2), we can see $\mathcal{N}_{12}(\aleph_{\alpha+1}) \subset \mathcal{V}^{\aleph_{\alpha+1}}_{p}$ and $\mathsf{EPWFP}$ fails in $\mathcal{V}^{\aleph_{\alpha+1}}_{p}$.
\end{remark}
%%%%%%%%%%%%%%%

\section{Van Douwen’s Choice Principle in two permutation models}

\begin{prop} 
{\em The following hold:
\begin{enumerate}\item The statement $\mathsf{vDCP\land UT(\aleph_{0}, \aleph_{0}, cuf) \land \neg M(IC, DI)}$ has a permutation model.\item The statement $\mathsf{vDCP\land \neg \mathsf{MC(\aleph_{0}, \aleph_{0})}}$ has a permutation model.
\end{enumerate}}\end{prop}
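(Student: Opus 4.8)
The plan is to exhibit, for each of the two clauses, a known permutation model in which $\mathsf{vDCP}$ can be verified by the Howard--Saveliev--Tachtsis-style argument sketched in the introduction (the one that works in the basic Fraenkel model), while the other two properties are already known for those models from the literature. For clause (1) I would take the permutation model $\mathcal{N}$ of \cite[proof of Theorem 3.3]{CHHKR2008}, in which $\mathsf{UT(\aleph_{0}, \aleph_{0}, cuf)}$ holds, and recall from \cite[Theorem 13]{KTW2021} that $\mathsf{M(IC, DI)}$ fails there; it then remains only to check $\mathsf{vDCP}$ in $\mathcal{N}$. For clause (2) I would take the permutation model $\mathcal{M}$ of Howard--Tachtsis \cite[Theorem 3.4]{HT2021}, where $\mathsf{LW} \land \neg \mathsf{MC(\aleph_{0}, \aleph_{0})}$ holds, so $\neg \mathsf{MC(\aleph_{0}, \aleph_{0})}$ is given for free, and again verify $\mathsf{vDCP}$.

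The core of the argument, carried out identically in both models, is the verification of $\mathsf{vDCP}$. Let $\mathcal{Z} = \{(X_i, \leq_i) : i \in I\}$ be a family in the model, with each $(X_i, \leq_i)$ order-isomorphic to $(\mathbb{Z}, \leq)$, and fix a support $E$ of $\mathcal{Z}$ (a finite set of atoms, or whatever the notion of support is in the given model). For each $i$, the relation $\leq_i$ has some support $E_i \supseteq E$; using the fact that a $\mathbb{Z}$-chain has no first element but its order is rigid (the only order-automorphisms of $(\mathbb{Z},\leq)$ are translations, which have no fixed points unless they are the identity), one shows that every element of $X_i$ is in fact fixed by $\mathrm{fix}_{\mathcal{G}}(E_i)$, hence $X_i$ is well-orderable via its support, and in particular $\leq_i$ well-orders $X_i$ in the sense that it is a definable linear order on a set each of whose elements has support $E_i$. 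The key point is that if some $\phi \in \mathrm{fix}_{\mathcal{G}}(E_i)$ moved an element $x \in X_i$, then $\phi$ would induce a non-trivial order-automorphism of $(X_i, \leq_i) \cong (\mathbb{Z}, \leq)$ — but composing with itself enough times (or using that the orbit of $x$ under $\langle \phi \rangle$ would be finite while $\mathbb{Z}$-translations have infinite orbits) yields a contradiction. Once each $X_i$ is well-orderable with a uniformly chosen well-ordering (read off from $\leq_i$: e.g. enumerate $X_i$ as $\dots, a_{-1}, a_0, a_1, \dots$ and take the well-ordering $a_0, a_1, a_{-1}, a_2, a_{-2}, \dots$), the map $i \mapsto$ ($\leq_i$-analogue of $0$), i.e. the element of $X_i$ that is the $\leq_i$-least in this derived well-ordering — but $\mathbb{Z}$ has no least element, so instead one must pick a distinguished element differently. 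This is the one delicate point: a $\mathbb{Z}$-chain has no canonical element, so the choice function is obtained not pointwise-canonically but by noting that the whole family $\mathcal{Z}$ together with its support $E$ allows a well-ordering of $\bigcup_i X_i$ in the model (since each $X_i$ has support $\subseteq$ its own $E_i$, and the $E_i$'s, being finite sets over a structure with a support, can be handled), and then taking the choice function to send $i$ to the least element of $X_i$ in that global well-ordering.

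I expect the main obstacle to be exactly this last step — making precise why the family $\{X_i : i \in I\}$ admits a choice function even though no single $X_i$ has a distinguished point. The resolution (following Howard--Saveliev--Tachtsis) is that $\mathrm{fix}_{\mathcal{G}}(E)$ acts on the index set $I$ and on $\bigcup_i X_i$; one shows that the $\mathrm{fix}_{\mathcal{G}}(E)$-orbit of any $(i,x)$ with $x \in X_i$ projects injectively into a well-orderable set (because, as argued above, $\phi \in \mathrm{fix}_{\mathcal{G}}(E_i)$ fixes $X_i$ pointwise, so the stabilizer of $i$ already stabilizes each $x \in X_i$), whence a choice function can be defined on each orbit of the $I$-action by a support-counting argument, and these piece together into a choice function in the model. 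The remaining work is bookkeeping: checking that the resulting choice function is itself hereditarily symmetric (it has support $E$), and quoting \cite[Theorem 13]{KTW2021} and \cite[Theorem 3.4]{HT2021} respectively for the failure of $\mathsf{M(IC,DI)}$ and $\mathsf{MC(\aleph_{0}, \aleph_{0})}$ and the truth of $\mathsf{UT(\aleph_{0},\aleph_{0},cuf)}$. I would also double-check that $\mathsf{vDCP}$ as stated (family indexed by an arbitrary set $I$, not necessarily well-orderable) really does go through — this is where the orbit analysis relative to $E$ is essential, since without it one cannot even enumerate $I$.
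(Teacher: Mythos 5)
Your choice of models and external citations matches the paper exactly: the model $\mathcal{N}$ of \cite[proof of Theorem 3.3]{CHHKR2008} with $\mathsf{UT(\aleph_0,\aleph_0,cuf)}$ and $\neg\mathsf{M(IC,DI)}$ (by \cite[Theorem 13]{KTW2021}), and the model $\mathcal{M}$ of \cite[Theorem 3.4]{HT2021} with $\neg\mathsf{MC(\aleph_0,\aleph_0)}$; and your key observation is sound: since every group element in these models moves only finitely many atoms and hence has finite order, while nontrivial order-automorphisms of $(\mathbb{Z},\leq)$ are translations of infinite order, any $\phi$ stabilizing $(X_i,\leq_i)$ setwise fixes $X_i$ pointwise. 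However, your assembly of the choice function has a genuine gap. From the pointwise stabilization you conclude that each $X_i$ is individually well-orderable, and you then assert that ``the whole family together with its support $E$ allows a well-ordering of $\bigcup_i X_i$ in the model.'' This does not follow: the index set $I$ need not be well-orderable, the supports $E_i$ vary with $i$ and cannot be chosen uniformly inside the model, and a union of well-orderable sets over a non-well-orderable index set need not be well-orderable. Any well-ordering of $\bigcup_i X_i$ lying in the model would have to be fixed by $\mathrm{fix}_{\mathcal{G}}(E')$ for a single support $E'$, which in general fails; a selection of ``least elements'' read off from a well-ordering chosen in the ambient $\mathsf{ZFA+AC}$ universe is not equivariant and so need not be symmetric. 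Your fallback remark about defining the choice function ``on each orbit of the $I$-action by a support-counting argument'' is not carried out and does not by itself supply the missing equivariant selection.

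The paper closes exactly this gap differently, and more simply: it forms the single poset $R=\bigcup_i X_i$ with $x\prec y$ iff $x,y$ lie in the same $X_i$ and $x\leq_i y$, and proves (Lemma 5.2, following Tachtsis) that for a support $E$ of $R$ the orbits $\mathrm{Orb}_{\mathrm{fix}_{\mathcal{G}}(E)}(p)$ are antichains — if $p\prec\phi(p)$ then $p\prec\phi(p)\prec\phi^2(p)\prec\cdots$ contradicts $\phi^n=\mathrm{id}$ — and that the set of these orbits is a well-ordered partition $\{W_\alpha:\alpha<\kappa\}$ of $R$ in the model (the partition is canonical from $E$, hence symmetric, even though $R$ itself need not be well-orderable). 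Then for each $i$ one takes $\alpha_i=\min\{\alpha: X_i\cap W_\alpha\neq\emptyset\}$; since $X_i$ is a chain and $W_{\alpha_i}$ an antichain, $X_i\cap W_{\alpha_i}$ is a singleton, and $i\mapsto\bigcup(X_i\cap W_{\alpha_i})$ is the desired choice function, equivariant by construction. If you want to salvage your route, this least-antichain device (or an equivalent canonical, $\mathrm{fix}_{\mathcal{G}}(E)$-equivariant selection) is precisely the ingredient you must add; without it the proof is incomplete.
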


\begin{proof}(1) We recall the permutation model $\mathcal{N}$ which was constructed in \cite[proof of Theorem 3.3]{CHHKR2008} where $\mathsf{UT(\aleph_{0}, \aleph_{0}, cuf)}$ holds. In order to describe $\mathcal{N}$, we start with a model $M$ of $\mathsf{ZFA + AC}$ with a set $A$ of atoms such that $A$ has a denumerable partition $\{A_{i} : i \in \omega\}$ into denumerable sets, and for each $i \in \omega$, $A_{i}$ has a denumerable partition $P_{i} = \{A_{i,j} : j \in \mathbb{N}\}$ into finite sets such that, for every $j \in \mathbb{N}$, $\vert A_{i,j}\vert = j$. Let $\mathcal{G} = \{\phi \in Sym(A) : (\forall i \in \omega)(\phi(A_{i}) = A_{i})$ and $\vert\{x \in A : \phi(x) \neq x\}\vert < \aleph_{0}\}$, where $Sym(A)$ is the group of all permutations of $A$. Let $\textbf{P}_{i} = \{\phi(P_{i}) : \phi \in \mathcal{G}\}$ for each $i \in \omega$ and let $\textbf{P} = \bigcup\{\textbf{P}_{i} : i \in \omega\}$. Let $\mathcal{F}$ be the normal filter of subgroups of $\mathcal{G}$ generated by the filter base $\{$fix$_{\mathcal{G}}(E) : E \in [\textbf{P}]^{<\omega}\}$. Then $\mathcal{N}$ is the permutation model determined by $M$, $\mathcal{G}$ and $\mathcal{F}$. Keremedis, Tachtsis, and Wajch proved that $\mathsf{M(IC, DI)}$ fails in $\mathcal{N}$ (cf. \cite[proof of Theorem 13(i)]{KTW2021}). We follow steps (1), (2), and (4) from the proof of \cite[Lemma 5.1]{Ban2} to see that $\mathsf{vDCP}$ holds in $\mathcal{N}$. For the sake of convenience, we write down the proof.

\begin{lem}
{\em If $(X,\leq)$ is a poset in $\mathcal{N}$, then $X$ can be written as a well-ordered disjoint union $\bigcup\{W_{\alpha} : \alpha<\kappa\}$ of antichains.}
\end{lem}

\begin{proof}
Let $(X,\leq)$ be a poset in $\mathcal{N}$ and $E\in [\textbf{P}]^{<\omega}$ be a support of $(X,\leq)$. 
Following the arguments of \cite[claim 3.5]{Tac2019b} we can see that for each $p \in X$, the set $\text{Orb}_{\text{fix}_{\mathcal{G}}(E)}(p)$ is an antichain in $(X,\leq)$ since every element $\eta\in \mathcal{G}$ moves only finitely many atoms. Following the arguments of \cite[claim 3.6]{Tac2019b} we can see that

\begin{center}
    $\mathcal{O}=\{\text{Orb}_{\text{fix}_{\mathcal{G}}(E)}(p): p\in X\}$
\end{center}

is a well-ordered partition of $X$.
\end{proof}

We recall the arguments from the $1^{st}$-paragraph of \cite[p.175]{HST2016} to give a proof of $\mathsf{vDCP}$ in $\mathcal{N}$.
Let $\mathcal{A} = \{(A_{i}, \leq_{i}) : i \in I\}$ be a family as in $\mathsf{vDCP}$. Without loss of generality, we assume that $\mathcal{A}$ is pairwise disjoint. Let $R = \bigcup \mathcal{A}$. We partially order $R$ by requiring $x \prec y$ if and only if there exists an index $i\in I$ such that
$x, y \in A_{i}$ and $x \leq_{i} y$. By Lemma 5.2, $R$ can be written as a well-ordered disjoint
union $\bigcup\{W_{\alpha} : \alpha<\kappa\}$ of antichains. For each $i \in I$, let $\alpha_{i} = min\{\alpha \in \kappa : A_{i} \cap W_{\alpha} \neq \emptyset\}$. Since for all $i\in I$, $A_{i}$ is linearly ordered, it follows that $A_{i} \cap W_{\alpha_{i}}$ is a singleton for each $i\in I$. Consequently, $f = \{(i,\bigcup(A_{i} \cap W_{\alpha_{i}})) : i \in I\}$ is a choice function of $\mathcal{A}$. Thus, $\mathsf{vDCP}$ holds in $\mathcal{N}$.

(2). We recall the permutation model (say $\mathcal{M}$) which was constructed in \cite[proof of Theorem 3.4]{HT2021}. In  order  to  describe $\mathcal{M}$, we
start with a model $M$ of $\mathsf{ZFA + AC}$ with a denumerable set $A$ of atoms which is written as a disjoint union
$\bigcup\{A_{n} : n \in \omega\}$, where $\vert A_{n}\vert = \aleph_{0}$ for all $n \in \omega$.
For each $n \in \omega$, we let $\mathcal{G}_{n}$ be the group of all permutations of $A_{n}$ which move only finitely
many elements of $A_{n}$. Let $\mathcal{G}$ be the weak direct product of the $\mathcal{G}_{n}$’s for $n \in \omega$.
Consequently, every permutation of $A$ in $\mathcal{G}$ moves only finitely many atoms.
Let $\mathcal{I}$ be the normal ideal of subsets of $A$ which is generated by finite unions of $A_{n}$’s. Let $\mathcal{F}$ be the normal filter on $\mathcal{G}$ generated by the subgroups fix$_{\mathcal{G}}(E)$, $E \in \mathcal{I}$.
Let $\mathcal{M}$ be the Fraenkel–Mostowski model, which is determined by $M$, $\mathcal{G}$, and $\mathcal{F}$. 

Howard and Tachtsis \cite[proof of Theorem 3.4]{HT2021} proved that $\mathsf{MC(\aleph_{0}, \aleph_{0})}$ fails in $\mathcal{M}$. Since every permutation of $A$ in $\mathcal{G}$ moves only finitely many atoms, following the arguments in the proof of (1), $\mathsf{vDCP}$ holds in $\mathcal{M}$.
\end{proof}

\begin{remark}
In every Fraenkel-Mostowski permutation model, $\mathsf{CS}$ (Every poset without a maximal element has two disjoint cofinal subsets) implies $\mathsf{vDCP}$ (cf. \cite[Theorem 3.15(3)]{HST2016}). We can see that in the above-mentioned permutation models (i.e., $\mathcal{N}$ and $\mathcal{M}$) $\mathsf{CS}$ and $\mathsf{CWF}$ (Every poset has a cofinal well-founded subset) hold applying Lemma 5.2 and following the methods of  \cite[Theorem 3.26]{HST2016} and \cite[proof of Theorem 10 (ii)]{Tac2018}.
\end{remark}

%%%%%%%%%%%%%%%%%
\section{Spanning subgraphs and weak choice forms} 
We recall some known results.
\begin{lem}{($\mathsf{ZF}$; Delhomme--Morillon; \cite[Lemma 1]{DM2006})}
{\em Given a set $X$ and a set $A$ which is the range of no mapping with domain $X$, consider a mapping $f : A \rightarrow \mathcal{P}(X)\backslash \{\emptyset\}$. Then
\begin{enumerate}
    \item  There are distinct $a$ and $b$ in $A$ such that $f(a)\cap f(b) \neq \emptyset$.
    \item If the set $A$ is infinite and well-orderable, then for every positive integer $p$, there is an $F \in [A]^{p}$ such that $\bigcap f[F]:=\bigcap_{a\in F} f(a)$
is non-empty.
\end{enumerate}
}
\end{lem}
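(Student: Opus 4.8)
The plan is to prove (1) by a direct pigeonhole argument and to reduce (2) to an auxiliary ``bounded multiplicity'' statement proved by induction on $p$; throughout, the discipline is that no choice is ever invoked, because every selection made is a selection of an element of a \emph{well-orderable} set out of one of its finite (or well-orderable) subsets, never a selection of an element of $X$. For (1): note first that the existence of $f$ together with the standing hypothesis forces $X\neq\emptyset$ and (reading $\emptyset$ as the range of the empty mapping) $A\neq\emptyset$, so we may fix $a_{0}\in A$. Assume toward a contradiction that $f(a)\cap f(b)=\emptyset$ for all distinct $a,b\in A$. Then every $x\in\bigcup f[A]$ lies in exactly one $f(a)$; sending such an $x$ to that $a$ and sending every $x\in X\setminus\bigcup f[A]$ to $a_{0}$ defines a mapping $X\to A$, and it is onto because each $f(a)\neq\emptyset$. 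This contradicts the hypothesis that $A$ is the range of no mapping with domain $X$.

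For (2), I would isolate the following statement, to be proved in $\mathsf{ZF}$: \emph{if $A$ is an infinite well-orderable set, $f:A\to\mathcal{P}(X)\setminus\{\emptyset\}$, and every $x\in X$ belongs to at most $p$ of the sets $\{f(a):a\in A\}$, then $A$ is the range of a mapping with domain $X$.} Granting this, (2) follows quickly. Fix a positive integer $p$ and suppose $\bigcap f[F]=\emptyset$ for every $F\in[A]^{p}$. Then no $x\in X$ can lie in $p$ of the sets $f(a)$, i.e.\ each $x$ lies in at most $p-1$ of them. If $p=1$ this means every $f(a)=\emptyset$, contradicting $f(a)\in\mathcal{P}(X)\setminus\{\emptyset\}$; if $p\geq 2$, the auxiliary statement applied with parameter $p-1$ makes $A$ the range of a mapping with domain $X$, again contradicting the hypothesis on $A$. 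Hence some $F\in[A]^{p}$ has $\bigcap f[F]\neq\emptyset$.

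The auxiliary statement is proved by induction on $p$. The case $p=1$ is exactly the disjointness argument of (1) (multiplicity $1$ means the $f(a)$ are pairwise disjoint). For the inductive step, fix a well-ordering $<$ of $A$, put $g(a)=f(a)\setminus\bigcup_{a'<a}f(a')$, and let $A_{0}=\{a\in A:g(a)\neq\emptyset\}$ and $A_{1}=A\setminus A_{0}$. The sets $\{g(a):a\in A_{0}\}$ are pairwise disjoint and nonempty, and $A_{0}\neq\emptyset$ (the $<$-least element of $A$ lies in $A_{0}$), so $X$ maps onto $A_{0}$ exactly as in (1). Moreover, for each $x\in X$ the $<$-least $a$ with $x\in f(a)$ lies in $A_{0}$, so $x$ belongs to at most $p-1$ of the sets $f(a)$ with $a\in A_{1}$; if $A_{1}$ is finite then $A_{0}$ is cofinite in the infinite well-orderable $A$, so $|A_{0}|=|A|$ and we are done via the bijection $A_{0}\to A$, while if $A_{1}$ is infinite the induction hypothesis (parameter $p-1$) gives a mapping of $X$ onto $A_{1}$. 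In the remaining case ($A_{0}$ finite) argue symmetrically, and otherwise we have $X$ mapping onto each of $A_{0}$ and $A_{1}$ with $A=A_{0}\sqcup A_{1}$; since one of the two surjective images must have cardinality $|A|$ (using $\mu_{0}+\mu_{1}=\max(\mu_{0},\mu_{1})$ for cardinals of a partition of an infinite well-orderable set), $X$ maps onto $A$.

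The main obstacle is not a single hard step but maintaining the choiceless bookkeeping: one must verify at every point that the only ``choices'' are least-element selections inside the well-orderable set $A$ (this is where well-orderability of $A$ is genuinely used, together with the cardinal-arithmetic fact above), and one must handle the finite/infinite case split for $A_{0}$ and $A_{1}$ so that the induction hypothesis — which needs the relevant index set infinite — is always legitimately applicable.
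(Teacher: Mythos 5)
The paper does not prove this lemma at all; it is recalled verbatim from Delhomm\'e--Morillon \cite[Lemma 1]{DM2006} and used as a black box in Proposition 6.5, so there is no in-paper argument to compare against. Your proof is correct on its own terms. Part (1) is the standard disjointness argument (and the same argument shows the hypothesis already forces $|A|\geq 2$, so the degenerate case you flag is harmless). For part (2), your auxiliary ``bounded multiplicity'' statement and the induction on $p$ via the sets $g(a)=f(a)\setminus\bigcup_{a'<a}f(a')$ all check out: the selections are least-element choices in the well-ordered $A$, the multiplicity really does drop by one on $A_{1}$ because the least index witnessing $x\in f(a)$ lands in $A_{0}$, and the case split is closed correctly using $\mu_{0}+\mu_{1}=\max(\mu_{0},\mu_{1})$ for well-ordered cardinals. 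You could shorten the argument and avoid the induction entirely: under the contradiction hypothesis each $A_{x}=\{a\in A: x\in f(a)\}$ is a nonempty finite set of size at most $p-1$ for $x\in\bigcup f[A]$, and every $a\in A$ lies in some $A_{x}$; enumerating each $A_{x}$ by the fixed well-ordering gives a surjection from $X\times(p-1)$ onto $A$, and since $A=\bigcup_{i<p-1}g[X\times\{i\}]$ is a finite union of well-orderable subsets of $A$, one of them has full cardinality $|A|$, whence $X$ itself maps onto $A$. This is essentially the same cardinality trick you invoke at the end of your inductive step, applied once globally instead of at every stage.
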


\begin{lem}(Cayley’s formula; $\mathsf{ZF}$)
{\em The number of spanning trees in $K_{n}$ is $n^{n-2}$ for any $n\in \omega\backslash \{0,1,2\}$.}
\end{lem}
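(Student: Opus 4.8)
The plan is to prove Cayley's formula by exhibiting the classical Pr\"ufer bijection. A spanning tree of $K_n$ is the same object as a labelled tree on the vertex set $[n] := \{1,2,\dots,n\}$, so it suffices to set up a bijection between the set $\mathcal{T}_n$ of such trees and the set $[n]^{n-2}$ of all length-$(n-2)$ sequences over $[n]$, the latter having exactly $n^{n-2}$ elements. Everything below is finite combinatorics, so it is literally a $\mathsf{ZF}$ (indeed wholly choice-free) argument.

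First I would define the encoding map $\pi \colon \mathcal{T}_n \to [n]^{n-2}$ recursively. For $n = 2$ set $\pi(T)$ to be the empty sequence. For $n \geq 3$, use the fact that a finite tree with at least two vertices has at least two leaves: let $\ell$ be the least-labelled leaf of $T$, let $a$ be its unique neighbour, and put $\pi(T) = (a)^{\frown}\pi(T \setminus \{\ell\})$, using that deleting a leaf from a tree yields a tree on one fewer vertex. Then I would record the key degree invariant: for every $v \in [n]$, the number of coordinates of $\pi(T)$ equal to $v$ is $\deg_T(v) - 1$. This is a short induction on $n$, and it has the immediate corollary that the vertices \emph{not} occurring in $\pi(T)$ are exactly the leaves of $T$.

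Next I would define the decoding map $\rho \colon [n]^{n-2} \to \mathcal{T}_n$. Given $s = (a_1,\dots,a_{n-2})$, assign to each $v \in [n]$ the target degree $d(v) := 1 + |\{\,i : a_i = v\,\}|$, and run the obvious loop: for $i = 1, \dots, n-2$ let $\ell_i$ be the smallest vertex with current $d$-value $1$ that has not yet been selected, output the edge $\{\ell_i, a_i\}$, and decrement $d(\ell_i)$ and $d(a_i)$; after the loop exactly two vertices retain $d$-value $1$, and we output the edge joining them. By induction on $n$ one checks that such an $\ell_i$ always exists, that the $n-1$ output edges form a connected acyclic graph on $[n]$, and — comparing the two constructions step by step — that $\pi \circ \rho = \mathrm{id}_{[n]^{n-2}}$ and $\rho \circ \pi = \mathrm{id}_{\mathcal{T}_n}$. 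The base case $n = 3$ is immediate: $3^{1} = 3$, and there are three paths on $\{1,2,3\}$, with codes of length one.

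I expect the only real work to be the bookkeeping in the mutual-inverse verification: one must match the $i$-th edge deleted during the computation of $\pi(T)$ with the $i$-th edge added during the computation of $\rho(\pi(T))$, and confirm the terminal two-vertex edges agree in both directions. This rests on the degree invariant together with the observation that deleting the least leaf of $T$ changes the ``current degree'' of no remaining vertex other than its neighbour $a_1$, so the greedy choice of smallest current-degree-$1$ vertex in $\rho$ exactly tracks the greedy choice of smallest leaf in $\pi$. All of this is routine. Alternatively one could invoke the Matrix--Tree Theorem and compute the relevant cofactor of the Laplacian $nI - J$ of $K_n$, which is equally choice-free; but the Pr\"ufer bijection is more self-contained and yields the count directly.
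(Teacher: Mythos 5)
Your proposal is correct: the Pr\"ufer encoding/decoding argument you sketch is the standard bijective proof of Cayley's formula, every step is finite combinatorics (least-labelled leaf, greedy smallest current-degree-one vertex), so no choice is used anywhere and the count $|\mathcal{T}_n| = n^{n-2}$ follows. The paper itself offers no proof of this lemma --- it is stated as a known classical fact (``Cayley's formula'') and only used to feed the cardinality $k^{k-2}$ into the application of $\mathsf{AC}_{k^{k-2}}$ in Proposition 6.6 --- so there is no argument of the paper's to compare against; your write-up would simply supply the omitted verification, and either your Pr\"ufer bijection or the Matrix--Tree computation you mention at the end would do equally well for that purpose.
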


\begin{lem}(Scoin's formula; $\mathsf{ZF}$)
{\em The number of spanning trees in $K_{m,n}$ is $n^{m-1}m^{n-1}$ for any $n,m\in \omega\backslash \{0,1\}$.}
\end{lem}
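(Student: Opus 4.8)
The plan is to obtain the formula from the Matrix--Tree theorem (Kirchhoff's theorem); since that theorem concerns determinants of integer matrices, it is a purely finite, choice-free statement, so no extra care is needed to stay inside $\mathsf{ZF}$ (exactly as for the preceding lemma on Cayley's formula), and what remains is a short determinant computation. First I would fix the two parts $U=\{u_{1},\dots,u_{m}\}$ and $V=\{v_{1},\dots,v_{n}\}$ of $K_{m,n}$ and write down its Laplacian. Each vertex of $U$ has degree $n$, each vertex of $V$ has degree $m$, and the off-diagonal entry indexed by a pair of distinct vertices is $-1$ when they lie in different parts and $0$ otherwise; so, in the vertex order $u_{1},\dots,u_{m},v_{1},\dots,v_{n}$,
\[
L=\begin{pmatrix} nI_{m} & -J_{m\times n}\\ -J_{n\times m} & mI_{n}\end{pmatrix},
\]
where $J_{p\times q}$ denotes the all-ones $p\times q$ matrix. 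By the cofactor form of the Matrix--Tree theorem, the number $\tau(K_{m,n})$ of spanning trees of $K_{m,n}$ equals $\det\widetilde L$, where $\widetilde L$ is the reduced Laplacian obtained by deleting the row and the column indexed by $v_{n}$:
\[
\widetilde L=\begin{pmatrix} nI_{m} & -J_{m\times(n-1)}\\ -J_{(n-1)\times m} & mI_{n-1}\end{pmatrix}.
\]

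Next I would evaluate $\det\widetilde L$ using the Schur complement identity $\det\!\begin{pmatrix}A&B\\ C&D\end{pmatrix}=\det(A)\,\det(D-CA^{-1}B)$, which applies since $A=nI_{m}$ is invertible (as $n\ge 2$). A direct computation gives $CA^{-1}B=\tfrac{m}{n}J_{(n-1)\times(n-1)}$, hence $D-CA^{-1}B=mI_{n-1}-\tfrac{m}{n}J_{n-1}$. The all-ones matrix $J_{n-1}$ has rank $1$ and trace $n-1$, so its eigenvalues are $n-1$ (simple) and $0$ (multiplicity $n-2$); therefore $mI_{n-1}-\tfrac{m}{n}J_{n-1}$ has eigenvalues $m-\tfrac{m}{n}(n-1)=\tfrac{m}{n}$ (simple) and $m$ (multiplicity $n-2$), so $\det(D-CA^{-1}B)=\tfrac{m}{n}\,m^{n-2}=m^{n-1}/n$. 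Combining, $\tau(K_{m,n})=n^{m}\cdot m^{n-1}/n=n^{m-1}m^{n-1}$, as required.

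The only step that is not a routine calculation is the invocation of the Matrix--Tree theorem, but this is classical (it follows from the Cauchy--Binet formula applied to an oriented incidence matrix) and is already used implicitly in the paper via Cayley's formula; it involves no choice. If one prefers to avoid even that black box, the same count drops out of diagonalizing the full Laplacian $L$: its spectrum is $0$ (simple), $n$ with multiplicity $m-1$ (eigenvectors supported on $U$ with zero coordinate sum), $m$ with multiplicity $n-1$ (eigenvectors supported on $V$ with zero coordinate sum), and $m+n$ (the eigenvector equal to $n$ on $U$ and $-m$ on $V$); then the eigenvalue form of the Matrix--Tree theorem gives $\tau(K_{m,n})=\tfrac{1}{m+n}\prod_{\lambda\ne 0}\lambda=\tfrac{1}{m+n}\,n^{m-1}m^{n-1}(m+n)=n^{m-1}m^{n-1}$, where the product runs over the nonzero eigenvalues counted with multiplicity. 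A third, bijective route sets up a Prüfer-type coding --- repeatedly delete the leaf of least label and record its unique neighbour --- which yields a bijection between the spanning trees of $K_{m,n}$ and $V^{m-1}\times U^{n-1}$, and hence the count $n^{m-1}m^{n-1}$ directly; verifying that this coding is a bijection is the most labour-intensive of the three approaches.
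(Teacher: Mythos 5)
Your main argument is correct, and it does more than the paper does: the paper states this lemma (like Cayley's formula in the preceding lemma) as a classical fact without proof, so there is no proof of record to compare against, and you are supplying the missing verification. Your route via the Matrix--Tree theorem is sound and genuinely choice-free, since it is a statement about determinants of finite integer matrices; the Schur-complement computation checks out ($CA^{-1}B=\tfrac{m}{n}J_{n-1}$ because the two minus signs cancel, the eigenvalues $\tfrac{m}{n}$ and $m$ give $\det(D-CA^{-1}B)=m^{n-1}/n$, and $n^{m}\cdot m^{n-1}/n=n^{m-1}m^{n-1}$), and the eigenvalue version with spectrum $\{0,\,n^{(m-1)},\,m^{(n-1)},\,m+n\}$ gives the same count. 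One caution about your third, "bijective" alternative: the plain Pr\"ufer code (delete the least-labelled leaf, record its neighbour) does \emph{not} directly give a bijection with $V^{m-1}\times U^{n-1}$. It gives an injection into sequences of length $m+n-2$ in which the $m-1$ entries from $V$ and the $n-1$ entries from $U$ appear interleaved, and not every interleaving arises (already for $K_{2,2}$ there are $4$ spanning trees but $8$ length-$2$ sequences with one entry from each part), so a genuinely bipartite modification of the code is needed to get Scoins' formula bijectively. Since that route is offered only as an optional alternative and your two main arguments are complete, this does not affect the correctness of the proof.
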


\begin{lem}(cf. \cite[Chapter 30, Problem 5]{KT2006})
{\em $\mathsf{AC}_{m}$ implies $\mathsf{AC}_{n}$ if $m$ is a multiple of $n$.}
\end{lem}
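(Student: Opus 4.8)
The plan is to use the standard ``multiply each set by a fixed finite index set'' argument. Write $m = kn$ for some positive integer $k$; this is possible precisely because $m$ is a multiple of $n$, and since $m,n\geq 2$ we have $k\geq 1$. Let $\mathcal{A}$ be an arbitrary family of $n$-element sets, for which we seek a choice function.

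For each $A\in\mathcal{A}$ I would set $A^{*}=A\times\{0,1,\dots,k-1\}$, so that $\vert A^{*}\vert=kn=m$. Forming the family $\mathcal{A}^{*}=\{A^{*}:A\in\mathcal{A}\}$ uses only Replacement and Separation, so no fragment of $\mathsf{AC}$ is invoked at this stage. The key point is that the assignment $A\mapsto A^{*}$ is injective: from $A^{*}$ one recovers $A$ as the set of first coordinates of its elements, i.e.\ as $\{x:(x,0)\in A^{*}\}$ (here $k\geq 1$ is used so that $0\in k$). Hence $A\mapsto A^{*}$ is a bijection of $\mathcal{A}$ onto $\mathcal{A}^{*}$, and $\mathcal{A}^{*}$ is a family of $m$-element sets.

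Now apply $\mathsf{AC}_{m}$ to $\mathcal{A}^{*}$ to obtain a choice function $g$ on $\mathcal{A}^{*}$. For each $A\in\mathcal{A}$ we have $g(A^{*})\in A^{*}=A\times\{0,\dots,k-1\}$, so the first coordinate of $g(A^{*})$ lies in $A$; define $f(A)$ to be that first coordinate. Then $f$ is a choice function on $\mathcal{A}$, and since $\mathcal{A}$ was arbitrary, $\mathsf{AC}_{n}$ follows.

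There is no genuine obstacle here; the only points needing a moment's care are (i) checking that $A\mapsto A^{*}$ is injective, so that a choice function on $\mathcal{A}^{*}$ really transfers back to one on $\mathcal{A}$, and (ii) confirming that the passage from $\mathcal{A}$ to $\mathcal{A}^{*}$ and from $g$ to $f$ uses no choice beyond the single application of $\mathsf{AC}_{m}$. If one prefers to argue with indexed families $\{A_{i}:i\in I\}$ rather than with sets of sets, one replaces $A^{*}$ by $A_{i}\times\{i\}\times\{0,\dots,k-1\}$ to keep the correspondence injective even when some of the $A_{i}$ coincide.
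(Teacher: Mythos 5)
Your argument is correct and is exactly the standard product trick that the cited source (Komj\'ath--Totik, Chapter 30, Problem 5) uses; the paper itself gives no proof, only the citation. Replacing each $n$-element set $A$ by $A\times\{0,\dots,k-1\}$ (with $m=kn$), applying $\mathsf{AC}_m$, and projecting to the first coordinate is precisely the intended argument, and your care about injectivity of $A\mapsto A^{*}$ and about using no choice outside the single application of $\mathsf{AC}_m$ is well placed.
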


Applying the above lemmas we prove the following propositions.
\begin{prop} ($\mathsf{ZF}$) {\em The following hold:
\begin{enumerate}
    \item $\mathsf{AC_{\leq n-1}^{\omega}} + \mathcal{Q}^{n}_{lf,c}$
    is equivalent to $\mathsf{AC_{fin}^{\omega}}$ for any $2< n\in \omega$.

    \item $\mathsf{UT(WO,WO,WO)}$ implies $\mathsf{AC_{\leq n-1}^{WO}} + \mathcal{Q}_{lw,c}^{n,k}$ and the later implies $\mathsf{AC_{WO}^{WO}}$ for any $2< n,k\in \omega$. 
    \item $\mathcal{P}_{lf,c}^{m}$ is equivalent to $\mathsf{AC_{fin}^{\omega}}$ for any even integer $m\geq 4$.
    \item $\mathcal{Q}^{n}_{lf,c}$ fails in $\mathcal{N}_{6}$ for any $2<n\in\omega$.
\end{enumerate}}
\end{prop}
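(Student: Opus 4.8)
The plan is to treat all four parts with the two moves usual for results of this kind. For the ``positive'' implications I would pass through spanning trees. If $G$ is an infinite locally finite connected graph, fix a vertex $v$, let $S_{r}$ be the set of vertices at distance exactly $r$ from $v$, and observe by induction that each $S_{r}$ is finite (as $S_{r+1}$ lies in a finite union of finite neighbourhoods) and that $V_{G}=\bigcup_{r\in\omega}S_{r}$. Under $\mathsf{AC^{\omega}_{fin}}$ one picks a well-ordering of each $S_{r}$, whence $V_{G}$ is well-orderable and a connected spanning tree of $G$ can be built by recursion along that well-ordering. A spanning tree is acyclic, hence omits every $K_{2,n}$ (already $K_{2,2}$ contains a $4$-cycle) and has no $m$-cycles, so it is a spanning $m$-bush; since $\mathsf{AC^{\omega}_{fin}}$ trivially gives $\mathsf{AC^{\omega}_{\leq n-1}}$, this yields $\mathsf{AC^{\omega}_{fin}}\Rightarrow\mathsf{AC^{\omega}_{\leq n-1}}+\mathcal{Q}^{n}_{lf,c}$ and $\mathsf{AC^{\omega}_{fin}}\Rightarrow\mathcal{P}^{m}_{lf,c}$. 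Running the same argument with ``finite'' replaced by ``well-orderable'' and $\mathsf{UT(WO,WO,WO)}$ used to well-order each $S_{r}$ and then the countable union $\bigcup_{r}S_{r}$, one gets that under $\mathsf{UT(WO,WO,WO)}$ every infinite locally well-orderable connected graph has a spanning tree; together with $\mathsf{UT(WO,WO,WO)}\Rightarrow\mathsf{AC^{WO}_{WO}}\Rightarrow\mathsf{AC^{WO}_{\leq n-1}}$ (restrict a well-ordering of the union to each member) this gives the first implication of (2).

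For the implications returning to a choice principle I would encode the family into one connected graph by a Delhomm\'e--Morillon style gadget. In (3), given a denumerable family $(Y_{j})_{j\in\omega}$ of non-empty sets (without loss of generality pairwise disjoint), build the graph whose spine is a ray $s_{0}-s_{1}-s_{2}-\cdots$ and which carries, over each $s_{j}$, a fresh vertex $t_{j}$ and $|Y_{j}|$ internally disjoint $s_{j}$--$t_{j}$ paths $\pi_{y}$ of length $m/2$, one for each $y\in Y_{j}$. This graph is locally finite and connected; a spanning $m$-bush $H$ contains at most one complete $\pi_{y}$ in gadget $j$ (the union of two of them is an $m$-cycle), while connectedness forces at least one (the only route from $t_{j}$ to $s_{0}$ runs through gadget $j$, and internal vertices of distinct paths do not interconnect), so exactly one $\pi_{y}$ is complete and that $y$ is the choice at $Y_{j}$; hence $\mathcal{P}^{m}_{lf,c}\Rightarrow\mathsf{AC^{\omega}_{fin}}$. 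In (1) the gadget over $s_{j}$ is the complete bipartite graph $K_{2,Y_{j}}$ between $\{s_{j},t_{j}\}$ and $Y_{j}$, again locally finite and connected; if $H$ is a connected spanning subgraph omitting $K_{2,n}$, then $W_{j}:=\{y\in Y_{j}:\{s_{j}y,\,t_{j}y\}\subseteq E_{H}\}$ has at most $n-1$ elements (else $H\supseteq K_{2,n}$ inside gadget $j$) and is non-empty (connectedness of $t_{j}$), so $(W_{j})_{j\in\omega}$ is a denumerable family of non-empty sets of size $\leq n-1$; applying $\mathsf{AC^{\omega}_{\leq n-1}}$ to $(W_{j})_{j}$ produces a choice function on $(Y_{j})_{j}$, giving $\mathsf{AC^{\omega}_{\leq n-1}}+\mathcal{Q}^{n}_{lf,c}\Rightarrow\mathsf{AC^{\omega}_{fin}}$ and finishing (1) and (3).

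For the remaining implication of (2), that $\mathsf{AC^{WO}_{\leq n-1}}+\mathcal{Q}^{k,n}_{lw,c}$ implies $\mathsf{AC^{WO}_{WO}}$, I would take a well-ordered family $(C_{\alpha})_{\alpha<\lambda}$ of non-empty well-orderable sets, dispose of the members of size $<n$ with $\mathsf{AC^{WO}_{\leq n-1}}$, put the gadget $K_{k,C_{\alpha}}$ over each remaining $C_{\alpha}$, hang all gadgets on a star from one root, and note the resulting graph is locally well-orderable and connected; a connected spanning subgraph omitting $K_{k,n}$ has at most $n-1$ vertices of $C_{\alpha}$ joined in it to all $k$ poles, and the task is to read a choice off this. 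This is where I expect the real work: for $k\geq 3$ the connectedness argument of (1) no longer pins down a choice, because the spanning subgraph can reconnect the off-spine poles through a long alternating walk without forcing any vertex of $C_{\alpha}$ to be joined to all poles, so Lemma~6.1 (the Delhomm\'e--Morillon combinatorial lemma) must be invoked, exploiting the well-orderability of $C_{\alpha}$ and of $\lambda$ — a missing choice function makes the relevant set the range of no map with domain $\lambda$ — to bound the overlap of the ``missing-pole'' sets and produce a non-empty common value; the gadget must be shaped so that $H$ cannot evade this, and one must recheck local well-orderability of the (possibly very large) graph.

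Finally, (4) follows by running the construction of (1) inside $\mathcal{N}_{6}$. Take $\mathcal{N}_{6}$ in the form where the atoms are $A=\bigcup_{j\in\omega}A_{j}$ with the $A_{j}$ pairwise disjoint, each finite with the cardinalities $|A_{j}|$ unbounded, and $\mathcal{G}$ the group of permutations of $A$ fixing each $A_{j}$ setwise, and build the locally finite connected graph of (1) with $A_{j}$ in place of $Y_{j}$ (poles $s_{j},t_{j}$ labelled by $j$); it has empty support and so lies in $\mathcal{N}_{6}$. If some connected spanning subgraph $H$ omitting $K_{2,n}$ were in $\mathcal{N}_{6}$, it would have a finite support $E$, and the sets $W_{j}=\{y\in A_{j}:\{s_{j}y,t_{j}y\}\subseteq E_{H}\}$ would satisfy $1\leq|W_{j}|\leq n-1$ and be $\mathrm{fix}_{\mathcal{G}}(E)$-invariant; choosing $j$ with $A_{j}\cap E=\emptyset$ and $|A_{j}|\geq n$ (possible since $E$ is finite and the cardinalities are unbounded), $\mathrm{fix}_{\mathcal{G}}(E)$ acts on $A_{j}$ as its full symmetric group, which fixes no proper non-empty subset, contradicting $\pi(W_{j})=W_{j}$. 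So $\mathcal{Q}^{n}_{lf,c}$ fails in $\mathcal{N}_{6}$; alternatively, since $\mathsf{AC^{\omega}_{fin}}$ fails in $\mathcal{N}_{6}$ it suffices to verify that $\mathsf{AC^{\omega}_{\leq n-1}}$ holds there and to quote part (1).
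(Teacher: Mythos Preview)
Your treatments of (1), (3), and (4) are correct. For (3) and (4) you take essentially the paper's route (the paper also deduces (4) by quoting (1) together with the known status of $\mathsf{AC^{\omega}_{fin}}$ and $\mathsf{AC^{\omega}_{\leq n-1}}$ in $\mathcal{N}_{6}$; your direct support argument is a fine alternative). For (1) your gadget is genuinely simpler than the paper's: you hang $K_{\{s_{j},t_{j}\},Y_{j}}$ over the spine and use connectedness of $H$ directly to see that some $y\in Y_{j}$ must be joined to both poles, whereas the paper attaches an auxiliary finite well-ordered set $B_{j}$ with $|B_{j}|>|Y_{j}|$, forms $K_{Y_{j},B_{j}}$, and invokes Lemma~6.1(1) (no surjection $Y_{j}\to B_{j}$) to find two elements of $B_{j}$ whose $H$-neighbourhoods in $Y_{j}$ intersect. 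Your shortcut is cleaner for (1), but the price is that it does not scale to (2), and that is where the gap lies.

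In (2) your gadget $K_{k,C_{\alpha}}$ has only $k$ poles, and you correctly diagnose that for $k\geq 3$ connectedness of $H$ no longer forces any $c\in C_{\alpha}$ to be joined to all $k$ poles, so the set you would like to hand to $\mathsf{AC^{WO}_{\leq n-1}}$ may well be empty. Lemma~6.1(2) does not rescue this setup: its hypothesis requires the set indexing the neighbourhood map to be \emph{infinite} and well-orderable, and $k$ poles is a finite index set. The paper's construction repairs exactly this point: for each $\alpha$ it introduces an \emph{infinite} well-ordered set $B_{\alpha}$ with no surjection $C_{\alpha}\to B_{\alpha}$ and forms the complete bipartite graph between $C_{\alpha}$ and $B_{\alpha}$. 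Then the map $f_{\alpha}:B_{\alpha}\to\mathcal{P}(C_{\alpha})\setminus\{\emptyset\}$ sending $b$ to its $H$-neighbourhood satisfies the hypotheses of Lemma~6.1(2), yielding a $k$-element $F\subseteq B_{\alpha}$ with $\bigcap_{b\in F}f_{\alpha}(b)\neq\emptyset$; since $H$ omits $K_{k,n}$ this intersection has at most $n-1$ elements, and taking the \emph{first} such $F$ in the well-order on $B_{\alpha}^{k}$ makes the resulting $(\leq n-1)$-set canonical, so $\mathsf{AC^{WO}_{\leq n-1}}$ finishes. Your suggestion that ``a missing choice function makes the relevant set the range of no map with domain $\lambda$'' points Lemma~6.1 at the wrong target: the lemma is applied \emph{locally} at each $\alpha$, with $X=C_{\alpha}$ and $A=B_{\alpha}$, not globally with $\lambda$.
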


\begin{proof}
(1). ($\Leftarrow$) We assume  $\mathsf{AC_{fin}^{\omega}}$. Fix any $2< n\in \omega$. We know that $\mathsf{AC_{fin}^{\omega}}$ implies $\mathsf{AC_{\leq n-1}^{\omega}}$  in $\mathsf{ZF}$. Moreover, $\mathsf{AC_{fin}^{\omega}}$ implies $\mathcal{Q}^{n}_{lf,c}$ in $\mathsf{ZF}$ (cf. \cite[Theorem 2]{DM2006}).

($\Rightarrow$) Fix any $2< n\in \omega$. We show that $\mathsf{AC_{\leq n-1}^{\omega}} + \mathcal{Q}^{n}_{lf,c}$ implies $\mathsf{AC_{fin}^{\omega}}$ in $\mathsf{ZF}$. 
Let $\mathcal{A}=\{A_{i}:i\in \omega\}$ be a countably infinite set of non-empty finite sets. Without loss of generality, we assume that $\mathcal{A}$ is disjoint. Let $A=\bigcup_{i\in \omega} A_{i}$. Consider a countably infinite family $(B_{i},<_{i})_{i\in \omega}$ of well-ordered sets such that $\vert B_{i}\vert=\vert A_{i}\vert + k$ for some fixed $1\leq k\in \omega$, for each $i\in \omega$, $B_{i}$ is disjoint from $A$ and the other $B_{j}$’s, and there is no mapping with domain $A_{i}$ and range $B_{i}$ (cf. the proof of \cite[Theorem 1, Remark 6]{DM2006}).
Let $B=\bigcup_{i\in \omega} B_{i}$. Consider another countably infinite sequence $T=\{t_{i}:i\in \omega\}$ disjoint from $A$ and $B$. We construct a graph $G_{1}=(V_{G_{1}}, E_{G_{1}})$ (cf. Figure 1).

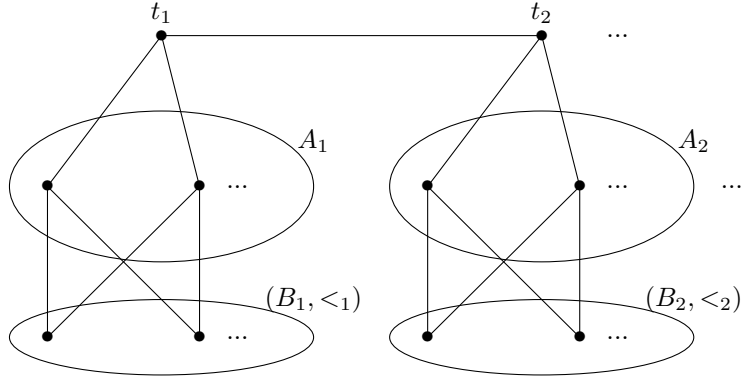
\begin{figure}[!ht]
\centering
\begin{minipage}{\textwidth}
\centering
\begin{tikzpicture}
\draw (-2.5, 1) ellipse (2 and 1);
\draw (-4,1) node {$\bullet$};
\draw (-2,1) node {$\bullet$};
\draw (-1.5,1) node {...};
\draw (-0.5,1.6) node {$A_{1}$};
\draw (-2.5,3) node {$\bullet$};%t_{1} bullet
\draw (-2.5,3.3) node {$t_{1}$};
%%%%
\draw (-2.5, -1) ellipse (2 and 0.5);
\draw (-0.5,-0.5) node {$(B_{1},<_{1})$};
\draw (-4,-1) node {$\bullet$};
\draw (-2,-1) node {$\bullet$};
\draw (-1.5,-1) node {...};
%%%%%%%%%%%%%%%%%%%
\draw (-4,1) -- (-2.5,3);%a_{1,1} to r_{1}
\draw (-2,1) -- (-2.5,3);
%%%%%%%
\draw (-4,1)-- (-4,-1);%a_{1,1} to b_{1,1}
\draw (-2,1)-- (-2,-1);
\draw (-4,1)-- (-2,-1);%a_{1,1} to b_{1,1}
\draw (-2,1)-- (-4,-1);
%%%%%%%
\draw (2.5, 1) ellipse (2 and 1);
\draw (1,1) node {$\bullet$};
\draw (3,1) node {$\bullet$};
%%%%%%%%%
\draw (3.5,1) node {...};
\draw (4.5,1.6) node {$A_{2}$};
\draw (2.5,3) node {$\bullet$};%t_{2} bullet
\draw (3.5,3) node {...};
\draw (2.5,3.3) node {$t_{2}$};
%%%%%%
\draw (2.5, -1) ellipse (2 and 0.5);
\draw (4.5,-0.5) node {$(B_{2},<_{2})$};
\draw (1,-1) node {$\bullet$};
\draw (3,-1) node {$\bullet$};
\draw (3.5,-1) node {...};
%%%%%%%%%%%%%%%%%%%
\draw (3,1) -- (2.5,3);%t_{2} to a_{2,1}
\draw (1,1) -- (2.5,3);
%%%%%%%
\draw (3,1)-- (3,-1);%a_{2,1} to b_{2,1,1}
\draw (1,1)-- (1,-1);
\draw (3,1)-- (1,-1);%a_{2,1} to b_{2,1,1}
\draw (1,1)-- (3,-1);
%%%%%%%%%%%%%%
\draw (-2.5,3) -- (2.5,3);
\draw (5,1) node {...};
\end{tikzpicture}
\end{minipage}
\caption{\em The graph $G_{1}$}
\end{figure}

\begin{center}
Let $V_{G_{1}} = A\cup B\cup T$, 
and

$E_{G_{1}} = \{\{t_{i}, t_{i+1}\}:{i\in\omega}\} 
\text{ }\bigcup  \text{ }
\{\{t_{i}, x\}: i\in\omega,x\in A_{i}\}
\text{ }\bigcup \text{ }
\{\{x,y\}:i\in \omega, x\in A_{i},y\in B_{i}\}$.\footnote{i.e., For each $i \in\omega$, let $\{t_{i},t_{i+1}\}\in E_{G_{1}}$ and $\{t_{i},x\}\in E_{G_{1}}$ for every element $x\in A_{i}$. Also for each $i \in\omega$, join each $x\in A_{i}$ to every element of $B_{i}$.}
\end{center}

Clearly, the graph $G_{1}=(V_{G_{1}}, E_{G_{1}})$ is connected and locally finite. By assumption, $G_{1}$ has a spanning subgraph $G'_{1}$ omitting $K_{2,n}$.
For each $i \in \omega$, let $f_{i}: B_{i} \rightarrow \mathcal{P}(A_{i})\backslash\{\emptyset\}$ map each element of $B_{i}$ to its neighbourhood in $G'_{1}$. We can see that for any two distinct $\epsilon_{1}$ and $\epsilon_{2}$ in $B_{i}$, $f_{i}(\epsilon_{1})\cap f_{i}(\epsilon_{2})$ has at most $n-1$
elements, since $G'_{1}$ has no $K_{2,n}$. By Lemma 6.1(1), there are tuples $(\epsilon'_{1}, \epsilon'_{2})\in B_{i}\times B_{i}$ such that $f_{i}(\epsilon'_{1})\cap f_{i}(\epsilon'_{2})\neq \emptyset$. Consider the first such tuple $(\epsilon''_{1},\epsilon''_{2})$ with respect to the well-ordering on $B_{i}\times B_{i}$.
Let $A'_{i}=f_{i}(\epsilon''_{1})\cap f_{i}(\epsilon''_{2})$.
By $\mathsf{AC_{\leq n-1}^{\omega}}$, we can obtain a choice function of $\mathcal{A}'=\{A'_{i}:i\in \omega\}$, which is a choice function of $\mathcal{A}$.

(2). For the first implication, we know that $\mathsf{UT(WO,WO,WO)}$ implies $\mathsf{AC_{\leq n-1}^{WO}}$ as well as the statement {\em `Every locally well-orderable connected graph is well-orderable'} in $\mathsf{ZF}$. The rest follows from the fact that every well-ordered connected graph has a spanning tree in $\mathsf{ZF}$.

We show that $\mathsf{AC_{\leq n-1}^{WO}}$ + $\mathcal{Q}_{lw,c}^{n,k}$ implies $\mathsf{AC_{WO}^{WO}}$. Let $\mathcal{A}=\{A_{n}:n\in \kappa\}$ be a well-orderable set of non-empty well-orderable sets. Without loss of generality, we assume that $\mathcal{A}$ is disjoint. Let $A=\bigcup_{i\in \kappa} A_{i}$. Consider an infinite well-orderable family $(B_{i},<_{i})_{i\in \kappa}$ of infinite well-orderable sets such that for each $i\in \kappa$, $B_{i}$ is disjoint from $A$ and the other $B_{j}$’s, and there is no mapping with domain $A_{i}$ and range $B_{i}$ (cf. the proof of \cite[Theorem 1, Remark 6]{DM2006}). Let $B=\bigcup_{i\in \kappa} B_{i}$. Consider another $\kappa$-sequence $T=\{t_{n}:n\in \kappa\}$ disjoint from $A$ and $B$. We construct a graph $G_{2}=(V_{G_{2}}, E_{G_{2}})$ as follows:

\begin{center}
Let $V_{G_{2}} = A\cup B\cup T$, 
and

$E_{G_{2}} = \{\{t_{i}, t_{j}\}: i,j\in\kappa$ and $i\neq j\}
\text{ }\bigcup  \text{ }
\{\{t_{i}, x\}: i\in\kappa,x\in A_{i}\}
\text{ }\bigcup \text{ }
\{\{x,y\}:i\in \kappa, x\in A_{i},y\in B_{i}\}$.
\end{center}

Clearly, the graph $G_{2}$ is connected and locally well-orderable. By assumption, $G_{2}$ has a spanning subgraph $G'_{2}$ omitting $K_{k,n}$. For each $i \in \kappa$, let $f_{i}: B_{i} \rightarrow \mathcal{P}(A_{i})\backslash\{\emptyset\}$ map each element of $B_{i}$ to its neighbourhood in $G'_{2}$. We can see that for any finite $k$-subset $H_{i}\subseteq B_{i}$, $\bigcap_{\epsilon\in H_{i}} f_{i}(\epsilon)$ has at most $n-1$ elements, since $G'_{2}$ has no $K_{k,n}$. Since each $B_{i}$ is infinite and well-orderable, by Lemma 6.1(2), there are tuples $(\epsilon_{1}, \epsilon_{2},...\epsilon_{k})\in B_{i}^{k}$ such that $\bigcap_{1\leq i< k}f_{i}(\epsilon_{i})\neq \emptyset$. Consider the first such tuple $(\epsilon'_{1}, \epsilon'_{2},...\epsilon'_{k})$ with respect to the well-ordering on $B_{i}^{k}$. Let $A'_{i}=\bigcap_{1\leq i< k}f_{i}(\epsilon'_{i})$. By $\mathsf{AC_{\leq n-1}^{WO}}$, we can obtain a choice function of $\mathcal{A}'=\{A'_{n}:n\in \kappa\}$, which is a choice function of $\mathcal{A}$.

(3). 
($\Rightarrow$) Fix any even integer $m=2(k+1) \geq 4$. We prove that $\mathcal{P}_{lf,c}^{m}$ implies $\mathsf{AC_{fin}^{\omega}}$. 
Let $\mathcal{A}=\{A_{i}:i\in\omega\}$ be a countably infinite set of non-empty finite sets and $A=\bigcup_{i\in \omega} A_{i}$. Let $T=\{t_{i}:i\in\omega\}$ be a sequence such that $t_{i}$’s are pair-wise distinct and belong to no $A_{j} \times \{1,..., k\}$, and $R=\{r_{i}:i\in\omega\}$ be a sequence such that $r_{i}$'s are pair-wise distinct and belong to no $(A_{j} \times \{1,..., k\})\cup \{t_{j}\}$ for any $i,j\in\omega$. We construct a graph $G_{3}= (V_{G_{3}},E_{G_{3}})$ as follows  (cf. Figure 2):
\begin{center}
    \text{Let } $V_{G_{3}} = (\bigcup_{i\in\omega} (A_{i}\times \{1,...,k\})) \cup R \cup T$, \text{and }
    
    $E_{G_{3}} = (\bigcup_{i\in \omega,x\in A_{i}} \{\{r_{i},(x, 1)\}, \{(x, 1),(x, 2)\},..., \{(x, k),t_{i}\}\}) \text{ }\bigcup\text{ } \{\{r_{i},r_{i+1}\}:i\in \omega\}$.
\end{center}

\begin{figure}[!ht]
\centering
\begin{minipage}{\textwidth}
\centering
\begin{tikzpicture}[scale=1]
%\draw (-2.5, 1) ellipse (2 and 1);
\draw (-4,1) node {$\bullet$};
\draw (-3,1) node {$\bullet$};
\draw (-2,1) node {$\bullet$};
\draw (-1.5,1) node {...};
%\draw (-0.5,1.6) node {$A_{1}$};
%\draw (-2.5,-1) node {$\bullet$};
\draw (-2.5,3) node {$\bullet$};%r_{1} bullet
\draw (-2.5,3.3) node {$t_{1}$};
%%%
\draw (-4,-0.4) node {$\bullet$};
\draw (-3,-0.4) node {$\bullet$};
\draw (-2,-0.4) node {$\bullet$};
%%%%
\draw (-4,-1) node {$\bullet$};
\draw (-3,-1) node {$\bullet$};
\draw (-2,-1) node {$\bullet$};
%%%%%
\draw (-4,-1.9) node {$\bullet$};%b_{1,p,1} bullet
\draw (-3,-1.9) node {$\bullet$};
\draw (-2,-1.9) node {$\bullet$};
%%%
\draw (-2.5,-3.1) node {$r_{1}$};
\draw (-2.5,-2.8) node {$\bullet$};%t_{1} bullet
%%%%%%%%%%%%%%%%%%%
\draw (-1.5,-2.1) node {$(x_{1},1)$};
\draw (-1.5,-1.3) node {$(x_{1},2)$};
\draw (-1.5,0.8) node {$(x_{1},k)$};
\draw (-5.5,0) node {$A_{1}\times \{1,...,k\}$};
%%%%%%%%%%%%%%%
\draw (-1.5,-0.4) node {...};
\draw (-1.5,-1) node {...};
\draw (-1.5,-1.9) node {...};
%%%%%%%%%%%%%%%%%%%
\draw (-4.2,-2.5) -- (-4.2,1.2) -- (0,1.2) -- (0,-2.5) -- (-4.2,-2.5);
%%%%%%%%
\draw (-4,1) -- (-2.5,3);%a_{1,1} to r_{1}
\draw (-3,1) -- (-2.5,3);
\draw [ultra thick](-2,1) -- (-2.5,3);
%%%%%%%
\draw (-4,1)-- (-4,-0.4);%a_{1,1} to b_{1,1,1}
\draw (-3,1)-- (-3,-0.4);
\draw [ultra thick](-2,1)-- (-2,-0.4);
%%%%%%%
\draw (-4,-0.4)-- (-4,-1);%b_{1,1,1} to b_{1,2,1}
\draw (-3,-0.4)-- (-3,-1);
\draw [ultra thick](-2,-0.4)-- (-2,-1);
%%%
\draw (-4,-1.3) node {.};
\draw (-4,-1.4) node {.};
\draw (-4,-1.5) node {.};
\draw (-3,-1.3) node {.};
\draw (-3,-1.4) node {.};
\draw (-3,-1.5) node {.};
\draw (-2,-1.3) node {.};
\draw (-2,-1.4) node {.};
\draw (-2,-1.5) node {.};
%%%
\draw (-4,-1.9) -- (-2.5,-2.8);%b_{1,p,1} to t_{1}
\draw (-3,-1.9) -- (-2.5,-2.8);
\draw [ultra thick](-2,-1.9) -- (-2.5,-2.8);
%%%%%
%\draw (2.5, 1) ellipse (2 and 1);
\draw (1,1) node {$\bullet$};
\draw (2,1) node {$\bullet$};
\draw (3,1) node {$\bullet$};

\draw (-3,1) node {$\bullet$};
\draw (-2,1) node {$\bullet$};

\draw (3.5,1) node {...};
%\draw (4.5,1.6) node {$A_{2}$};
%\draw (2.5,-1) node {$\bullet$};
%\draw (2.5,-1.3) node {$t_{2}$};
\draw (2.5,3) node {$\bullet$};%r_{2} bullet
\draw (2.5,3.3) node {$t_{2}$};
%%%%%%
\draw (1,-0.4) node {$\bullet$};%b_{2,1,1} bullet
\draw (2,-0.4) node {$\bullet$};
\draw (3,-0.4) node {$\bullet$};
%%%%
\draw (1,-1) node {$\bullet$};
\draw (2,-1) node {$\bullet$};
\draw (3,-1) node {$\bullet$};
%%%%%
\draw (1,-1.9) node {$\bullet$};%b_{2,p,1} bullet
\draw (2,-1.9) node {$\bullet$};
\draw (3,-1.9) node {$\bullet$};
%%%
\draw (2.5,-3.1) node {$r_{2}$};
\draw (2.5,-2.8) node {$\bullet$};%t_{2} bullet
%%%%%%%%%%%%%%%%%%%
\draw (3,1) -- (2.5,3);%r_{2} to a_{2,1}
\draw (2,1) -- (2.5,3);
\draw (1,1) -- (2.5,3);
%%%%%%%
\draw (3,1)-- (3,-0.4);%a_{2,1} to b_{2,1,1}
\draw (2,1)-- (2,-0.4);
\draw (1,1)-- (1,-0.4);
%%%%%%%
\draw (3,-0.4)-- (3,-1);%b_{2,1,1} to b_{2,2,1}
\draw (2,-0.4)-- (2,-1);
\draw (1,-0.4)-- (1,-1);
%%%
\draw (1,-1.3) node {.};
\draw (1,-1.4) node {.};
\draw (1,-1.5) node {.};
\draw (2,-1.3) node {.};
\draw (2,-1.4) node {.};
\draw (2,-1.5) node {.};
\draw (3,-1.3) node {.};
\draw (3,-1.4) node {.};
\draw (3,-1.5) node {.};
%%%
\draw (1,-1.9) -- (2.5,-2.8);%b_{1,p,1} to t_{2}
\draw (2,-1.9) -- (2.5,-2.8);
\draw (3,-1.9) -- (2.5,-2.8);
%%%%%%%%%%%%%%
\draw (-2.5,-2.8) -- (2.5,-2.8);
\draw (3.5,-2.8) node {...};
\draw (5,1) node {...};
\end{tikzpicture}
\end{minipage}
\caption{\em The graph $G_{3}$, where $\{t_{1}, (x_{1},k),... (x_{1}, 1), r_{1}\}$ is a path in $\zeta$.}
\end{figure}
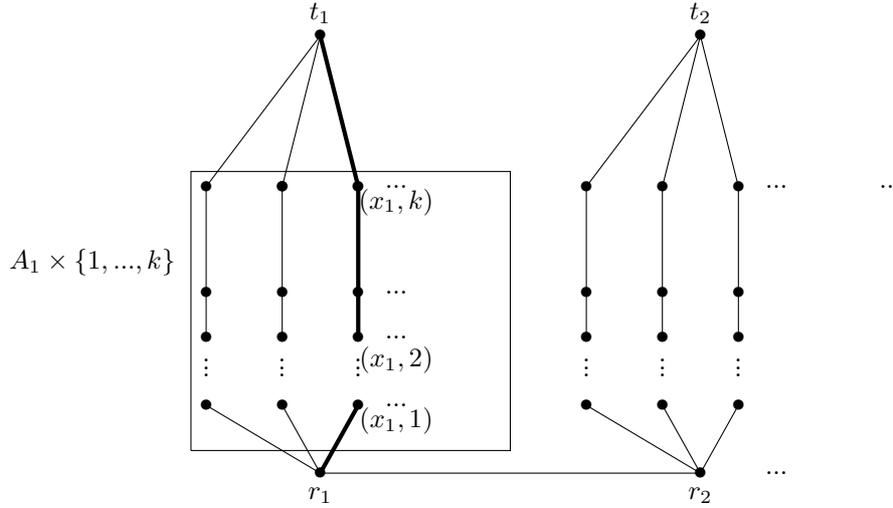
%{\bf{\underline{Constructing $G_{3}$}:}} 

Clearly, $G_{3}$ is locally finite and connected. By assumption, $G_{3}$ has a spanning $m$-bush $\zeta$. We can see that $\zeta$ generates a choice function of $\mathcal{A}$: for each $i \in I$, there is a unique $x \in A_{i}$, say $x_{i}$, such that $\{t_{i}, (x_{i},k),... (x_{i}, 1), r_{i}\}$ is a path in $\zeta$.

($\Leftarrow$) Fix any even integer $m\geq 4$. We prove that $\mathsf{AC_{fin}^{\omega}}$ implies  $\mathcal{P}_{lf,c}^{m}$. We know that $\mathsf{AC_{fin}^{\omega}}$ implies the statement {\em `Every infinite locally finite connected graph is countably infinite'} in $\mathsf{ZF}$. The rest follows from the fact that every well-ordered graph has a spanning tree in $\mathsf{ZF}$ and any spanning tree is a spanning $m$-bush.

(4). In $\mathcal{N}_{6}$, $\mathsf{AC_{fin}^{\omega}}$ fails, whereas $\mathsf{AC_{\leq n-1}^{\omega}}$ holds for any natural number $n> 2$ (cf. \cite[proof of Theorem 7.11, p.110]{Jec1973}). By Proposition 6.5(1), $\mathcal{Q}^{n}_{lf,c}$ fails in the model.
\end{proof}

We recall the definition of $P'_{G}$ from $\S 1.4.1$. 

\begin{prop}($\mathsf{ZF}$) 
{\em Fix any $2< k\in \omega$ and any $2\leq p,q< \omega$. The following hold:
\begin{enumerate}
\item $\mathsf{AC_{k^{k-2}}}$ implies the statement `Every graph from the class $P'_{K_{k}}$ has a spanning tree'.

\item $\mathsf{AC_{k}}$ implies the statement `Every graph from the class $P'_{C_{k}}$ has a spanning tree'.

\item ($\mathsf{AC_{p^{q-1}q^{p-1}}+AC_{p+q}}$) implies the statement `Every graph from the class $P'_{K_{p,q}}$ has a spanning tree'.
\end{enumerate}
}
\end{prop}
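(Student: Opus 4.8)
The plan is to isolate one general principle and then read off (1)--(3) as special cases. For a finite connected graph $G$ write $v(G) = \vert V(G)\vert$ and let $s(G)$ be its number of spanning trees (a positive integer). I would prove: for every finite connected graph $G$, the conjunction $\mathsf{AC}_{v(G)} + \mathsf{AC}_{s(G)}$ implies that every graph in the class $P'_{G}$ has a spanning tree. Granting this, part (1) is the case $G = K_k$, using Cayley's formula (Lemma 6.2) $s(K_k) = k^{k-2}$ together with $v(K_k) = k$, and the observation that $\mathsf{AC}_{k^{k-2}}$ already implies $\mathsf{AC}_k$ via Lemma 6.4 (since $k\mid k^{k-2}$ for $k\geq 3$), so the lone hypothesis $\mathsf{AC}_{k^{k-2}}$ suffices. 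Part (2) is the case $G = C_k$: a $k$-cycle is connected with $v(C_k) = k$ and has exactly $k$ spanning trees (delete any one of its $k$ edges), so $\mathsf{AC}_k$ alone is enough. Part (3) is the case $G = K_{p,q}$: this graph is connected with $v(K_{p,q}) = p+q$, and by Scoins' formula (Lemma 6.3) $s(K_{p,q}) = p^{q-1}q^{p-1}$; here neither factor of the product $v(G)\cdot s(G)$ need divide it, so both $\mathsf{AC}_{p+q}$ and $\mathsf{AC}_{p^{q-1}q^{p-1}}$ are genuinely used, which is exactly why the stated hypothesis lists both.

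To prove the principle, fix $G_2 \in P'_G$. By the definition of $P'_G$ there is $G_1 \in P_G$ such that $G_2$ is $G_1$ with one further vertex $t$ adjacent to every vertex of $G_1$; in particular $V_{G_2} = V_{G_1}\cup\{t\}$ and $G_2 - t = G_1$. The components of $G_1$ form — with no use of choice — a well-defined family $\{C_i : i \in I\}$ of pairwise disjoint vertex sets, each inducing in $G_1$ (equivalently in $G_2$) a copy of $G$; so $\vert C_i\vert = v(G)$ and $G_2[C_i] \cong G$ for every $i$. First apply $\mathsf{AC}_{v(G)}$ to $\{C_i : i \in I\}$ to pick $w_i \in C_i$ for each $i$; then apply $\mathsf{AC}_{s(G)}$ to the family $\{\,\mathrm{ST}(G_2[C_i]) : i \in I\,\}$ of sets of spanning trees (each of size $s(G)$) to pick a spanning tree $T_i$ of $G_2[C_i]$ for each $i$. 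Finally let $H$ be the spanning subgraph of $G_2$ whose edge set is $\bigcup_{i\in I}\bigl(E(T_i)\cup\{\{t,w_i\}\}\bigr)$.

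It then remains to check that $H$ is a spanning tree. It is spanning by construction, and it is connected because every $x \in C_i$ reaches $w_i$ inside the tree $T_i$ and $w_i$ reaches $t$ along the edge $\{t,w_i\}$, so all vertices lie in the component of $t$. For acyclicity the key point is that $t$ separates the components of $G_1$: any cycle of $G_2$ passes through $t$ at most once and uses no edge between two distinct components of $G_1$ (there are none), hence is confined to $C_i\cup\{t\}$ for a single $i$; but inside $C_i\cup\{t\}$ the edges of $H$ are precisely those of $T_i$ plus the single pendant edge $\{t,w_i\}$, which form a tree and so contain no cycle. Thus $H$ has no cycle, and being connected and acyclic it is a spanning tree of $G_2$.

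The hard part is really just the acyclicity of $H$, which lives in an infinite graph: one cannot argue by any global count, so it must be handled by the local observation that $t$ acts as a cut vertex confining every cycle of $G_2$ to one set $C_i\cup\{t\}$. A secondary bookkeeping point, already flagged above, is that for $K_{p,q}$ the vertex-choice and the spanning-tree-choice cannot in general be merged into a single application of $\mathsf{AC}_{(p+q)\,p^{q-1}q^{p-1}}$, which is why part (3) carries two choice hypotheses while parts (1) and (2) carry only one.
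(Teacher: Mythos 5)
Your proposal is correct and follows essentially the same route as the paper: choose one vertex per component via $\mathsf{AC}_{v(G)}$, choose one spanning tree per component via $\mathsf{AC}_{s(G)}$ (using Cayley's/Scoins' formulas and Lemma 6.4 for $\mathsf{AC}_{k^{k-2}}\Rightarrow\mathsf{AC}_{k}$), and glue them through the apex vertex $t$. Your explicit verification of acyclicity via $t$ being a cut vertex, and your packaging of the argument as a single general principle specialized to $K_k$, $C_k$, $K_{p,q}$, merely make explicit what the paper leaves as ``similarly''.
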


\begin{proof}
We prove (1). Let $G_{2}=(V_{G_{2}}, E_{G_{2}})$ be a graph from the class $P'_{K_{k}}$. Then there is a $G_{1}\in P_{K_{k}}$ (an infinite graph whose only components are $K_{k}$) such that $V_{G_{2}}=V_{G_{1}}\cup \{t\}$ for some $t\not\in V_{G_{1}}$. Let $\{A_{i}: i\in I\}$ be the components of $G_{1}$. By $\mathsf{AC_{k}}$ (which follows from $\mathsf{AC_{k^{k-2}}}$ (cf. Lemma 6.4)), we choose a sequence of vertices $\{a_{i}:i\in I\}$ such that $a_{i}\in A_{i}$ for all $i\in I$. By Lemma 6.2, the number of spanning trees in $A_{i}$ is $k^{k-2}$ for any $i\in I$. By $\mathsf{AC_{k^{k-2}}}$, we choose a sequence $\{s_{i}:i\in I\}$ such that $s_{i}=(V_{s_{i}}, E_{s_{i}})$ is a spanning tree of $A_{i}$ for all $i\in I$. We construct a graph $S_{G_{2}}=(V_{S_{G_{2}}}, E_{S_{G_{2}}})$ as follows:

\begin{center}
     \text{Let } $V_{S_{G_{2}}}=\{t\}\cup \bigcup_{i\in I} V_{s_{i}}$, \text{ and } 
     
     $E_{S_{G_{2}}}=\{\{t, a_{i}\}: i\in I\}\cup \bigcup_{i\in I} E_{s_{i}}$.
\end{center}

Then the graph $S_{G_{2}}=(V_{S_{G_{2}}}, E_{S_{G_{2}}})$ is a spanning tree of $G_{2}$.
Similarly, we can prove (2) and (3) since the number of spanning trees in $C_{k}$ is $k$ and the number of spanning trees in $K_{p,q}$ is $p^{q-1}q^{p-1}$ (cf. Lemma 6.3).
\end{proof}
%%%%%%%%%%%%%%%%%%%
\section{Synopsis of results, questions, and further studies} 
%%%%%%%%%%
%%%%%%%%%%%%%%%%%%%
\subsection{Synopsis of results} Fix any $2< n,k\in\omega$, any $2\leq q\in\omega$, and any even integer $m\geq 4$. In Figure 3, known results are depicted with dashed arrows, new results in $\mathsf{ZF}$ are mentioned with simple arrows, and new results in $\mathsf{ZFA}$ are mentioned with thick dotted arrows.

\begin{figure}[!ht]
\begin{minipage}{\textwidth}
\begin{tikzpicture}[scale=6]
\draw (2.42,1.02) node[above] {$\mathsf{AC}$};
\draw[dashed, -triangle 60] (2.35,1.05) -- (2.22,1.05);
\draw (2.15,1) node[above] {$\mathsf{AC_{q}}$};
\draw[dashed, -triangle 60] (2.1,1.1) -- (1.85,1.2);
\draw[dashed, -triangle 60] (2.15,1.1) -- (2.45, 1.2);
\draw (1.85,1.2) node[above] {$\mathsf{AC_{q}^{-}}$};
\draw (2.45, 1.2) node[above] {``There are no amorphous sets"};
\draw (1.85,1.5) node[above] {\textbf{Form 304}};
\draw (2.45,1.5) node[above] {\textbf{Form 233}};
\draw[ultra thick, dotted, -triangle 60] (1.85,1.5) -- (1.85, 1.3);
\draw[dashed, -triangle 60] (2.45,1.5) -- (2.45, 1.3);
\draw[ultra thick, dotted, -triangle 60] (2.45,1.5) -- (1.9, 1.3);
\draw[dashed, -triangle 60] (1.85,1.5) -- (2.4, 1.3);
\draw (1.87,1.42) -- (1.83,1.48);
\draw (2.37,1.42) -- (2.33,1.48);
\draw (2.47,1.48) -- (2.43,1.42);
\draw (2.03,1.48) -- (1.99,1.42);
%%%%%%%%%%%%%%%%%%%%%%%%%%%%
\draw[dashed, -triangle 60] (2.5,1.05) -- (2.65,1.05);
\draw (2.94,1) node[above] {$\mathsf{UT(WO,WO,WO)}$};
%%%%%%%%
\draw[-triangle 60] (3.22,1.05) -- (3.4,1.05);%UT(WO,WO,WO) to ...
\draw (3.65,1) node[above] {$\mathsf{AC_{\leq n-1}^{WO}}$ + $\mathcal{Q}_{lw,c}^{n,k}$};
\draw[-triangle 60] (3.88,1.06) -- (4.1,1.06);% ... to AC_{WO}^{WO}
%%%%%
\draw (4.2,1.2) node[above] {$\mathsf{AC_{fin}^{\omega}}$};
%%%%%%%%%%%
\draw (4.2, 1) node[above] { $\mathsf{AC^{WO}_{WO}}$};
\draw[dashed,-triangle 60] (4.2,1.1) --(4.2,1.22);%AC_{\WO}WO to AC_{\aleph_{fin}^{\omega}
%%%%%%%%%%
\draw[-triangle 60] (4.2,1.3)--(4.2,1.48);
\draw[-triangle 60] (4.2,1.48)--(4.2,1.3);
\draw (4.1,1.48) node[above] {$\mathsf{AC_{\leq n-1}^{\omega}}$ + $\mathcal{Q}_{lf,c}^{n}$};
%%%%%%
\draw[-triangle 60] (3.6,1.48)--(4.1, 1.3);
\draw[-triangle 60] (4.1, 1.3)--(3.6,1.48);
\draw (3.6,1.48) node[above] {$\mathcal{P}_{lf,c}^{m}$};
%%%%%%
\end{tikzpicture}
\end{minipage}
\caption{\em In the above figure,  we summarize the results of this note from sections 3, 6.}
\end{figure}
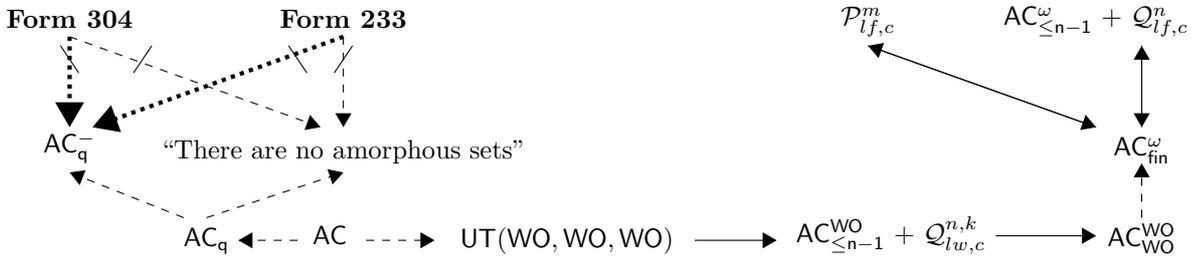

%%%%%%%%%
\textbf{Consistency results:}

\begin{itemize}
\item $\mathcal{N}_{HT}^{1}(q)\models\neg$ Form 269 $\land$ Form 304 $\land$ Form 233. Consequently, for any integer $q\geq 2$, neither Form 233 nor Form 304 implies $\mathsf{AC_{q}^{-}}$ in $\mathsf{ZFA}$.

\item $\mathsf{AC^{LO}}\nrightarrow$ Form 269 in $\mathsf{ZFA}$.

\item $\mathcal{V}_{p}\models \neg$ Form 3 $\land \neg\mathsf{ISAE}
\land\neg\mathsf{EPWFP} \land\neg\mathsf{AC_{q}} \land\neg\mathsf{MA(\aleph_{0})} \land\neg\mathsf{\leq\aleph_{0}}$-$\mathsf{MC}$.

\item $\mathcal{V}^{+}_{p}\models \neg$ Form 3 $\land \neg\mathsf{ISAE}
\land\neg\mathsf{EPWFP} \land\neg\mathsf{AC_{q}} \land\neg\mathsf{W_{\aleph_{1}}} \land\neg\mathsf{DC_{\aleph_{1}}}$.
\item The statements `$\mathsf{vDCP\land UT(\aleph_{0}, \aleph_{0}, cuf) \land \neg M(IC, DI)}$' and `$\mathsf{vDCP\land \neg \mathsf{MC(\aleph_{0}, \aleph_{0})}}$' have permutation models.
\end{itemize}
%%%%%%%%%%%%%
\subsection{Questions, and further studies}
\begin{question}
Which other choice principles hold in $\mathcal{V}_{p}$? In particular does $\mathsf{CAC}$, the infinite Ramsey's Theorem ($\mathsf{RT}$) \cite[Form 17]{HR1998},  and Form 233 hold in $\mathcal{V}_{p}$?
\end{question}

Bruce \cite{Bru2016} proved that $\mathsf{UT(WO,WO,WO)}$ holds in $\mathcal{V}_{p}$.

\begin{question}
Does $\mathsf{UT(WO,WO,WO)}$ hold in $\mathcal{V}^{+}_{p}$?
\end{question}

We recall that every symmetric extension (symmetric submodel of a forcing extension where $\mathsf{AC}$ can consistently fail) is given by a symmetric system $\langle \mathbb{P}, \mathcal{G}, \mathcal{F}\rangle$, where $\mathbb{P}$ is a forcing notion, $\mathcal{G}$ is a group of permutations of $\mathbb{P}$, and $\mathcal{F}$ is a normal filter of subgroups over $\mathcal{G}$. We recall the definition of Feferman--L\'{e}vy's symmetric extension from Dimitriou's Ph.D. thesis (cf. \cite[Chapter 1, section 2]{Dim2011}).

\textbf{Forcing notion $\mathbb{P}_{1}$:} Let $\mathbb{P}_{1} = \{p: \omega \times \omega \rightharpoonup \aleph_{\omega}: (\vert p\vert < \omega$ and $\forall (n, i) \in dom(p), p(n, i) < \omega_{n})\}$ be a forcing notion ordered by reverse inclusion, i.e., $p \leq q$ iff $p \supseteq q$ (We denote by $p: A\rightharpoonup B$ a partial function from $A$ to $B$).

\textbf{Group of permutations $\mathcal{G}_{1}$ of $\mathbb{P}_{1}$:} Let $\mathcal{G}_{1}$ be the full permutation group of $\omega$. Extend $\mathcal{G}_{1}$ to an automorphism group of $\mathbb{P}_{1}$ by letting an $a \in \mathcal{G}_{1}$ act on a $p \in \mathbb{P}_{1}$ by $a^{*}(p)= \{(n, a(i), \beta) : (n, i, \beta) \in p\}$. We identify $a^{*}$ with $a \in \mathcal{G}_{1}$. We can see that this is an automorphism group of $\mathbb{P}_{1}$.

\textbf{Normal filter $\mathcal{F}_{1}$ of subgroups over $\mathcal{G}_{1}$}: For every $n \in \omega$ define the following sets:
\begin{equation}
E_{n} =\{p \cap (n \times \omega \times \omega_{n}): p \in \mathbb{P}_{1}\},
\text{fix}_{\mathcal{G}_{1}}(E_{n}) = \{a \in \mathcal{G}_{1}: \forall p \in E_{n}(a(p) = p)\}.
\end{equation}
We can see that $\mathcal{F}_{1} =\{X \subseteq \mathcal{G}_{1} : \exists n \in \omega,$ fix$_{\mathcal{G}_{1}}(E_{n}) \subseteq X\}$ is a normal filter of subgroups over $\mathcal{G}_{1}$.

Feferman--L\'{e}vy's symmetric extension is the symmetric extension obtained by $\langle\mathbb{P}_{1}, \mathcal{G}_{1},\mathcal{F}_{1}\rangle$ where the statement `$\aleph_{1}$ is regular' \cite[Form 34]{HR1998} fails. It is known that the following statements follow from `$\aleph_{1}$ is regular' in $\mathsf{ZF}$ (cf. \cite{Ban2,BG1}).

(*): If $P$ is a poset such that the underlying set has a well-ordering and if all antichains in $P$ are finite and all chains in $P$ are countable, then $P$ is countable.

(**): If $P$ is a poset such that the underlying set has a well-ordering and if all antichains in $P$ are countable and all chains in $P$ are finite, then $P$ is countable.

\begin{question}
Does any of (**) and (*) is true in Feferman--L\'{e}vy's symmetric extension?
\end{question}
%%%%%%%%%%%%%%%
\section{Acknowledgements}
I am very thankful to the anonymous referee for careful reading of the paper and for useful suggestions which
improved its quality.
%%%%%%%%%%%%%%%

\end{document}